\numberwithin{equation}{section}
\newtheorem{Theorem}{Theorem}[section]
\newtheorem*{Theorem*}{Theorem}
\newtheorem{Corollary}[Theorem]{Corollary}
\newtheorem{Lemma}[Theorem]{Lemma}
\newtheorem{Proposition}[Theorem]{Proposition}
\theoremstyle{definition}
\newtheorem{Definition}[Theorem]{Definition}
\newtheorem{Remark}[Theorem]{Remark}
\newtheorem{Question}[Theorem]{Question}
\newcommand{\Z}{\mathbb{Z}} 
\newcommand{\R}{\mathbb{R}} 
\newcommand{\C}{\mathbb{C}} 
\newcommand{\DD}{\mathbb{D}} 
\newcommand{\M}{\mathbb{M}}
\newcommand{\Ms}{\M_{\mathrm{s}}}
\newcommand{\ev}{\mathrm{ev}}
\DeclareMathOperator{\Op}{Op}
\DeclareMathOperator{\supp}{supp}
\newcommand{\Id}{\operatorname{Id}}
\newcommand{\Om}{{\underline{\Omega}}}
\newcommand{\ntc}{\mathrm{ntc}}
\newcommand{\D}{\mathscr{D}}
\newcommand{\loc}{\mathrm{loc}}
\newcommand{\cl}{\mathrm{cl}}
\newcommand{\act}{\triangleleft}
\DeclareMathOperator{\Lip}{Lip}
\DeclareMathOperator{\spn}{span}
\DeclareMathOperator{\Diff}{Diff}
\DeclareMathOperator{\Riem}{Riem}
\DeclareMathOperator{\Bel}{Bel}
\renewcommand{\Re}{\operatorname{Re}}
\renewcommand{\Im}{\operatorname{Im}}
\newcommand \Cm { \mathbb{C}}
\newcommand \Dm { \mathbb{D}}
\newcommand \Rm { \mathbb{R}}
\newcommand \Sm { \mathbb{S}}
\newcommand \Zm { \mathbb{Z}}
\newcommand \zbar { \bar{z}}
\newcommand \mubar {{\bar{\mu}}}
\renewcommand \ss {\mathfrak s}
\newcommand \tu {\tilde{u}}
\begin{document}

\allowdisplaybreaks

\newcommand{\arXivNumber}{2403.05985}

\renewcommand{\PaperNumber}{045}

\FirstPageHeading

\ShortArticleName{Local and Global Blow Downs of Transport Twistor Space}

\ArticleName{Local and Global Blow Downs\\ of Transport Twistor Space}

\Author{Jan BOHR~$^{\rm a}$, Fran\c{c}ois MONARD~$^{\rm b}$ and Gabriel P. PATERNAIN~$^{\rm c}$}

\AuthorNameForHeading{J.~Bohr, F.~Monard and G.P.~Paternain}

\Address{$^{\rm a)}$~Mathematical Institute, University of Bonn, Endenicher Allee 60, 53115 Bonn, Germany}
\EmailD{\mail{bohr@math.uni-bonn.de}}

\Address{$^{\rm b)}$~Department of Mathematics, University of California, Santa Cruz, CA 95064, USA}
\EmailD{\mail{fmonard@ucsc.edu}}

\Address{$^{\rm c)}$~Department of Mathematics, University of Washington, Seattle, WA 98195, USA}
\EmailD{\mail{gpp24@uw.edu}}

\ArticleDates{Received April 02, 2025, in final form April 20, 2026; Published online May 05, 2026}

\Abstract{Transport twistor spaces are degenerate complex $2$-dimensional manifolds $Z$ that complexify transport problems on Riemannian surfaces, appearing, e.g., in geometric inverse problems. This article considers maps $\beta\colon Z\to \C^2$ with a {\it holomorphic blow-down structure} that resolve the degeneracy of the complex structure and allow to gain insight into the complex geometry of $Z$. The main theorems provide global $\beta$-maps for constant curvature metrics and their perturbations and local $\beta$-maps for arbitrary metrics, thereby proving a~version of the classical {\it Newlander--Nirenberg theorem} for degenerate complex structures.}

\Keywords{transport twistor space; geometric inverse problems; holomorphic blow down structure; geodesic X-ray transform}

\Classification{32G05; 53C28; 35R30}

\section{Introduction}

Transport twistor spaces are degenerate complex $2$-dimensional manifolds that can be associated to any oriented Riemannian surface and whose complex geometry is closely linked to the geodesic flow of the surface.
In the last few years, these twistor spaces have become a useful device to organise and reinterpret various questions arising in geometric inverse problems and dynamical systems \cite{BLP24,BoPa23}.
Vice versa, they give rise to an array of intriguing complex geometric questions that are amenable to tools of the aforementioned areas.

This article is an instance of the latter---using results from X-ray tomography and microlocal analysis, we address some fundamental issues pertaining to the complex structure of transport twistor space. In particular, we
show that locally the degeneracy of the complex structure can always be resolved. Using this, we derive an identity principle for biholomorphisms between twistor spaces, which features as an important ingredient in our follow-up work on biholomorphism rigidity \cite{BMP24b}.

Our strategy consists in finding appropriate blow-down maps $\beta$ from twistor space into~$\C^2$, first explicitly in constant curvature, then more systematically for simple metrics. We prove that the blow down structure persists for small perturbations of the constant curvature models and derive our local results by comparing small geodesic balls with nearly Euclidean disks.

\subsection{Transport twistor spaces}

 Let $(M,g)$ be an oriented Riemannian surface, possibly with non-empty boundary $\partial M$. We denote the unit tangent bundle with $SM=\{(x,v)\in TM\mid g(v,v)=1\}$ and the geodesic vector field by $X$. Recall that the latter is defined by
 \[
 	X(x,v)=\frac{\rm d}{{\rm d}t}\Big|_{t=0} (\gamma_{x,v}(t),\dot \gamma_{(x,v)}(t)) \in T_{(x,v)}SM,\qquad (x,v)\in SM,
 \]
 where $\gamma_{x,v}(t)$ is the unique geodesic 
 with initial conditions $\gamma(0)=x$ and $\dot \gamma(0)=v$.
 The {\it transport twistor space} of $(M,g)$ is the unit disk bundle
$Z=\{(x,v)\in TM\mid g(v,v)\le 1\}$,
equipped with a certain rank $2$ involutive structure $\D$ (that is, a subbundle of $T_\C Z=TZ\otimes \C$ that is closed under taking commutators, see also~\cite{Tre92}) that satisfies
\begin{gather}
	\D\cap \bar \D = 0\quad \text{on} \  Z\backslash SM \qquad \text{and} \qquad \D\cap \bar \D = \C X\quad \text{on}\  SM, \label{degeneracy}\\
	\D\cap T Z_x = T^{0,1}Z_x\qquad \text{for all} \  x\in M. \label{fibres}
\end{gather}
Here $Z_x\subset Z$ is the fibre over $x$ and $T^{0,1}Z_x$ is the anti-holomorphic tangent bundle with respect to the complex structure that $g$ and the orientation induce on $T_xM$. The first condition in \eqref{degeneracy} implies that $\D|_{Z^\circ} = \ker(J+i)$ for an almost complex structure $J \in C^\infty(Z^\circ, \operatorname{End}(TZ^\circ))$, and involutivity of $\D$ is equivalent to $J$ being integrable.\footnote{If one asks $J$ to be compatible with the given orientation of $Z\subset TM$, then $\D$ is uniquely characterised by properties \eqref{degeneracy} and \eqref{fibres}.}
By the second condition in~\eqref{degeneracy}, $\D$ ceases to be of complex type at points of $SM$ and in this sense $Z$ may be thought of as a degenerate complex surface.

There are several ways to define this involutive structure $\D$---we refer to Section~\ref{s_conformallyeuclidean} below for a definition of $\D$ in isothermal coordinates, which is all we need for the present purpose, and to
to~\cite{BoPa23} for a coordinate free approach. For now, we confine ourselves to a description for Euclidean domains $M\subset \bigl(\C,|{\rm d}z|^2\bigr)$, where $Z=\{(z,\mu)\in M\times \C\mid |\mu|\le 1\}$ and $\D$ is spanned by the complex vector fields
$\Xi = \partial_{\bar z} + \mu^2 \partial_z$ and $\partial_{\bar \mu}$.
Here properties \eqref{degeneracy} and \eqref{fibres} are easily verified, noting that for $z=x_1+{\rm i}x_2$ and $\mu={\rm e}^{{\rm i}\theta}$ ($\theta \in \R$) we have
\[
	\bar \mu \Xi = (\cos \theta) \partial_{x_1} + (\sin \theta) \partial_{x_2}= X.
\]

\subsubsection{Twistor correspondences}\label{s_correspondences}
In the articles \cite{BLP24,BoPa23}, several correspondence principles have been set up that link the geodesic flow (in the form of the transport equation $Xu=f$) with the complex geometry of the transport twistor space:
	
	\begin{center}
\begin{tabular}{|l||l|}
	\hline
	&\\[-1em]
	\multicolumn{1}{|c||}{\textbf{Geodesic flow}} &\multicolumn{1}{c|}{\textbf{Twistor space}}\\
	\hline
	& \\[-1em]
		Invariant functions (= $\ker X|_{C^\infty(SM)}$)& Holomorphic functions on $Z$\\
		that are `fibrewise holomorphic' &\\
		&\\[-.8em]
		Invariant distributions (= $\ker X|_{\mathcal D'(SM)}$) & Holomorphic functions on $Z^\circ$\\that are `fibrewise holomorphic'& with polynomial growth at $SM$\\
	&\\[-.8em]
		Connections and matrix potentials on $M$ & Holomorphic vector bundles on $Z$\\
		(= $0$-th order perturbations of $X$) &\\
		\hline
	\end{tabular}
	\end{center}

\noindent	Here holomorphic functions on $Z$ are functions $f\colon Z\rightarrow \C$ that are smooth up to the boundary and holomorphic in the interior (equivalently, ${\rm d}f|_\mathscr D = 0$ on $Z$). Similarly, holomorphic vector bundles on $Z$ are smooth up to the boundary in a suitable sense.

 These correspondence principles have, at least in the view of the authors, become tremendously helpful in organising and reinterpreting various developments in geometric inverse problems and dynamical systems in two dimensions. For example, Pestov--Uhlmann's proof of boundary rigidity for simple surfaces \cite{PeUh05} admits a natural formulation in terms of the algebra $\mathcal A(Z)$ of holomorphic functions on $Z$ and a Cartan extension operator $\mathcal A(M)\to \mathcal A(Z)$; moreover, the influential notion of {\it holomorphic integrating factors} that was introduced by Salo--Uhlmann~\cite{SaUh11}
is underpinned by an Oka--Grauert principle for holomorphic line bundles on $Z$---for more examples we refer to \cite{BLP24,BoPa23}.

 It seems intriguing to further explore this link and possibly
use the power of complex geometry to gain insights into the geodesic flow. Doing this requires both a better understanding of the degeneracy of the complex structure and of the complex geometry of twistor space---the present article aims at making a start on both objectives.

\subsubsection{Projective twistor spaces} Quotienting $Z$ by the antipodal action $(x,v)\mapsto (x,-v)$, one obtains another degenerate complex surface that depends only on the projective class of the geodesic flow. This surface---that we shall refer to as {\it projective twistor space} and denote $Z_\mathbb P$---is in fact the more classical object and has been studied by several other authors \cite{Dub83,Hit80,Leb80,LeMa02,LM10,Met21,MePa20,OR85}. The quotient map~${Z\rightarrow Z_\mathbb P}$
is a branched $2:1$ covering and thus locally near a boundary point of $Z_\mathbb P$ the complex structure degenerates in the same way as $\D$. This degeneracy has been explicitly addressed in the work of LeBrun and Mason \cite{LeMa02,LM10}, who---in the case of Zoll surfaces---construct a holomorphic map $\beta_{\mathrm{LM}}\colon Z_\mathbb P\rightarrow \C P^2$ that is an embedding in the interior and maps~$\partial Z_\mathbb P$ onto a totally real surface $P\subset\C P^2$, thereby collapsing all geodesics. As observed in \cite{Roc11}, there is a~diffeomorphism~${Z_\mathbb P \cong \big[\C P^2;P\big]}$ with the Melrosian blow-up of $P$ under which $\beta_{\mathrm{LM}}$ becomes the usual blow-down map.

This picture has been an inspiration for the maps $\beta\colon Z\to \C^2$ in this article, which recover many of the features of $\beta_{\mathrm{LM}}$, while being constructed by completely different means. By analogy, we refer to them also as blow-down maps, though the geometry of $\beta(Z)\subset \C^2$ is more complicated and it is unclear to us whether it fits into the framework of Melrosian blow-ups.

\subsubsection[The Euclidean beta-map]{The Euclidean $\boldsymbol{\beta}$-map}\label{sec_Eucl}

If $M=\R^2$, equipped with the Euclidean metric, the transport twistor space is given by
\[
Z=\bigl\{(z,\mu)\in \C^2\mid |\mu|\le 1\bigr\},\]
 with involutive structure $\D =\spn_\C\bigl(\mu^2\partial_z+\partial_{\bar z},\partial_{\bar\mu}\bigr)$.
The complex geometry of $Z$ can be completely understood by means of the map
\begin{equation}\label{euclidbeta}
	\beta\colon \ Z\rightarrow \C^2,\qquad \beta(z,\mu)=\bigl(z-\mu^2\bar z,\mu\bigr),
\end{equation}
which is holomorphic and maps the interior $Z^\circ$ diffeomorphically onto the poly-disk $\big\{(w,\xi)\in\C^2\mid |\xi|<1\big\}$, with inverse explicitly given by \[
\beta^{-1}(w,\xi)=\bigl(\bigl(w+\xi^2\bar w \bigr)/\bigl(1-|\xi|^4\bigr),\xi\bigr),\qquad |\xi|<1.
\]
The existence of such a map immediately implies that there is a wealth of holomorphic functions (in fact, if $M\subset \R^2$ is any open domain, then $Z^\circ$ is a Stein surface). Moreover, $\beta$ can be viewed as desingularisation of the complex structure. Indeed, introducing $w$ and $\xi$ as new coordinates on $Z^\circ$, we simply have $\D=\spn_\C(\partial_{\bar w},\partial_{\bar \xi})$.

The Euclidean $\beta$-map serves as a prototype for the type of desingularisation that we seek out in this article. To capture the characteristic properties of $\beta$, we propose the notion of {\it holomorphic blow down structure} that we shall discuss after a brief intermezzo regarding Hermitian structures on twistor space.

\subsubsection{Hermitian structure} \label{s_hermitian}
One of the characteristic properties of the Euclidean $\beta$-map
is that it restricts to an embedding on the interior $Z^\circ$. Unfortunately, the space of embeddings
\[
	\operatorname{Emb}\bigl(Z^\circ,\C^2\bigr)\subset C^\infty\bigl(Z^\circ,\C^2\bigr)
\]
is {\it not open} in the $C^\infty$-topology, which is problematic for the purpose of perturbation theory.\footnote{If $X$ is a smooth manifold, the space $C^\infty(X,\R^n)$ can be equipped with either the {\it weak} or the {\it strong} topology and if $X$ is non-compact these topologies are genuinely different. In the present article, we exclusively use the weak topology, which gives $C^\infty(X,\R^n)$ its standard Fr{\'e}chet space structure, and simply refer to it as the $C^\infty$-topology. The strong topology, while being a differential topologist's favourite, is less suitable for us, as the restriction map~${C^\infty\bigl(Z,\C^2\bigr)\rightarrow C^\infty\bigl(Z^\circ,\C^2\bigr)}$ fails to be continuous in the strong topology. See \cite[Chapter~2]{Hir76} for more details.

 To illustrate the failure of openness in $C^\infty$-topology, one can consider the following toy example: On $X=(0,2\pi)$ define $f_\epsilon(x)=\exp({\rm i}(1+\epsilon)x)$, then $f_0\in \operatorname{Emb}(X,\C)$ is not an interior point, as it can be approximated by the non-injective immersions $(f_\epsilon : \epsilon>0)$.
} This problem can be solved by additionally keeping track of metric data and for this reason, we equip $Z^\circ$ with a Hermitian metric.

Recall that a Hermitian metric on the complex surface $Z^\circ$ is the data of a $(1,1)$-form $\Omega\in C^\infty\bigl(Z^\circ,\Lambda^{1,1}Z^\circ\bigr)$ that, expressed in a frame where $\Lambda^{1,1}\cong \C^{2\times 2}$, corresponds to ${\rm i}H$ for a Hermitian matrix~$H$. Since $\Lambda^{1,1}Z^\circ$ is globally trivial (irrespective of the topology of $M$, see Section~\ref{s_metriccx}), there is a one-to-one correspondence
\[
	\text{Hermitian metric $\Omega$} \quad \leftrightarrow \quad H\in C^\infty\bigl(Z^\circ,\mathrm{Her}_+^2\bigr),
\]
where $\mathrm{Her}_+^2\subset \C^{2\times 2}$ is the space of positive definite Hermitian $2\times 2$ matrices.
 Under this correspondence, we pick out a distinguished metric $\Om$
\begin{equation}\label{def_om}
	\Om \quad \leftrightarrow \quad \underline{H}(x,v)=\begin{bmatrix}
 \bigl(1-|v|_g^4\bigr)^2 &\\
 & 1
\end{bmatrix} \in C^\infty\bigl(Z^\circ,\mathrm{Her}_+^2\bigr).
\end{equation}
In the Euclidean case, where $\Lambda^{0,1}Z^\circ$ is canonically framed by $ \Xi^\vee = \bigl({\rm d}\bar z - \bar \mu^2 {\rm d}z \bigr)/\bigl(1-|\mu|^4\bigr)$ and~${\rm d}\bar \mu$ this metric is given by
\smash{$ \Om ={\rm i} \bigl(1-|\mu|^4\bigr)^2 \bar \Xi^\vee\wedge \Xi^\vee + {\rm i} {\rm d}\mu \wedge {\rm d}\bar \mu$}.
The significance of $\Om$ stems from the observation that, at least in the Euclidean case, it degenerates in the same way as the Jacobian of the $\beta$-map. Precisely, there exists a constant $C>0$ such that
\begin{equation}\label{betainequality}
	C^{-1} \Om \le \beta^*\Omega_{\C^2} \le C~\Om,
\end{equation}
where $\Omega_{\C^2}={\rm i}{\rm d}w\wedge {\rm d}\bar w + {\rm i} {\rm d}\xi \wedge {\rm d}\bar \xi$ is the standard Hermitian structure on $\C^2$ and `$\le$' is understood as inequality between the associated Hermitian matrices. The verification of \eqref{betainequality} is elementary and the curious reader is invited to check the inequalities at once---we will come back to them in Section~\ref{s_fra} and give the required computations alongside the constant curvature case.

\subsubsection{Holomorphic blow-down structure}
Let now $(M,g)$ be a compact oriented Riemannian surface with non-empty boundary $\partial M$. Let~$\nu$ be the inward pointing unit normal to $\partial M$ and define
influx $(+)$ and outflux $(-)$ boundaries
\[
	\partial_\pm SM=\{(x,v)\in \partial SM\mid \pm g(v,\nu(x))\ge 0\}.
\]
If $\partial M$ is strictly convex and $(M,g)$ is non-trapping (i.e., geodesics reach $\partial M$ in finite time), then~$\partial_+SM$ can be thought of as
 parameter space for the geodesics of $M$. (Here and below one could equally well decide to work with $\partial_-SM$ instead of $\partial_+SM$, and this would lead to the same notion of holomorphic blow-down structure.)

\begin{Definition}\label{def_bds}
We say that a map $\beta \in C^{\infty}\bigl(Z,\C^2\bigr)$ has a {\it holomorphic blow-down structure} if it has the following properties:
\begin{enumerate}[label=\rm(\roman*)]\itemsep=0pt
	\item\label{bds1} $\beta|_{\partial_{+}SM}\colon\partial_{+}SM\to \C^2$ is a totally real $C_\alpha^\infty$-embedding;
	
	\item\label{bds2} the restriction of $\beta$ to $Z^{\circ}$ is a biholomorphism onto its image;
	
	\item\label{bds3} $\beta^*\Omega_{\C^2}\ge c \Om$ for some constant $c>0$.
\end{enumerate}
\end{Definition}

Let us comment on the three properties: The first property implies in particular that $\beta$ separates geodesics and
is the strongest embedding property one can expect. Note that $\beta$ being holomorphic implies that $X\beta\vert_{SM}=0$ and since the geodesic vector field $X$ is tangent to the boundary of $\partial_+SM$, this enforces the Jacobian of $\beta|_{\partial_+SM}$ to drop rank. To address this issue, we change the smooth structure near the boundary of $\partial_+SM$ and refer to embeddings with respect to this new smooth structure as {\it $C_\alpha^\infty$-embeddings} (see also Definition~\ref{def_calphaemb}). Further, if~$S$ is a smooth surface (such as $\partial_+SM$ equipped with the adjusted smooth structure), then an embedding $f\colon S\rightarrow \C^2$ is called {\it totally real}, if
\[
	{\rm d}f_x(T_x S) \cap J_{f(x)} {\rm d}f_x(T_x S) = 0\qquad \text{for all} \  x\in S.
\]
Here $J$ is the complex structure of $\C^2$. This aspect parallels LeBrun--Mason's paper on Zoll surfaces \cite{LeMa02}, where the space of geodesics (the analogue of $\partial_+SM$) is realised as a totally real submanifold of $\C P^2$.

The second property is equivalent to $\beta\colon Z^\circ \rightarrow \C^2$ being both holomorphic and injective. In this case the image $\beta(Z^\circ)$ is automatically open and the Jacobian has full rank. To ensure that this property is preserved under small perturbations, it is supplemented by the third property, which corresponds to a uniform lower bound on the singular values of the Jacobian in an $\Om$-orthonormal frame. If we were merely interested in $\beta$ being an immersion on $Z^\circ$, it would suffice to keep track of the determinant of this Jacobian, which is encoded in the $2$-form ${\rm d}\beta_1\wedge {\rm d}\beta_2$. In fact,
\begin{equation}\label{locint}
\beta^*\Omega_{\C^2} \ge c \Om \quad\text{for some} \  c>0\quad \Rightarrow \quad {\rm d}\beta_1\wedge {\rm d}\beta_2 \neq 0 \quad\text{pointwise on} \  Z
\end{equation}
(see Remark \ref{rlocint}) and requiring the stronger property on the left is
 precisely what ensures that also injectivity of $\beta$ persists under small perturbations. As will be shown later, the reverse inequality in \ref{bds3} holds automatically due to holomorphicity and thus having a holomorphic blow-down structure ensures that $\beta\colon Z^\circ \rightarrow \C^2$ is a bi-Lipschitz map.

The definition of holomorphic blow down structure has been chosen such that it captures the characteristic behaviour of $\beta$-maps in special geometries, while at the same time being an open condition with respect to simultaneous variations of $\beta$ and $g$, see Theorem~\ref{thm_openbds} below.

\subsection{Main theorems}
The work on this article started with the question of how to appropriately generalise the Euclidean $\beta$-map from Section~\ref{sec_Eucl} to other geometries and how these maps behave under small perturbations.
 We discuss our approach to this systematically in Section~\ref{s_betamaps} below---first, however, we shall present some facets and consequences that can be discussed without further preparation.

\subsubsection{Global blow-downs} Let $\DD=\{z\in \C \mid |z|\le 1\}$ be the closed unit disk. Our first result concerns constant curvature metrics on $\DD$ (see also Section~ \ref{s_fra} for a more precise formulation, allowing also curvatures with~${|\kappa|\ge 1}$).

\begin{Theorem}\label{thm_constantcurvature}
Let $M=\DD$ and \smash{$g=\bigl(1+\kappa |z|^2\bigr)^{-2}|{\rm d}z|^2$} $(-1<\kappa<1)$. On transport twistor space $Z=\big\{\bigl(z,\bigl(1+\kappa|z|^2\bigr)\mu\bigr)\big\}\subset T\DD$ with coordinates $z,\mu\in \DD$, the map
\[
			\beta_{\kappa}(z,\mu) = \left(\frac{z-\mu^2\bar z}{1+\kappa \bar z^2 \mu^2},\mu \frac{1+\kappa |z|^2}{1+\kappa \bar z^2 \mu^2}\right),\qquad (z,\mu)\in \DD\times \DD,
	\]
has a {\it holomorphic blow-down structure}.
\end{Theorem}

By means of a perturbation argument, we also obtain global blow-downs for nearby metrics. For simplicity we state the following theorem as an existence result.

\begin{Theorem}\label{thm_globalbeta}
Let $M=\DD$ and suppose that $g$ is a Riemannian metric which is sufficiently close $($in $C^\infty$-topology$)$ to the Euclidean metric $($or one of the constant curvature models from the preceding theorem$)$. Then there exists a map $\beta\in C^\infty\bigl(Z,\C^2\bigr)$ with holomorphic blow-down structure.
\end{Theorem}

It turns out that the $\beta$-map in the theorem arises from a {\it canonical construction} that works for any simple surface (see Definition~\ref{def_simple}). It is canonical in the sense that it is obtained by an energy minimisation and a subsequent Szeg\H{o} projection. This energy minimiser can be expressed via the geodesic X-ray transform and the inverse of its normal operator, see Definition~\ref{def_betaextension}. The underlying invertibility result for the normal operator was proved in \cite{MNP19a}.

\subsubsection{Local blow-downs} Applying the preceding theorem to small geodesic disks, we deduce the following.

\begin{Theorem}[transport Newlander--Nirenberg theorem]\label{TNNT} Let $(M,g)$ be an oriented Riemannian surface with twistor space $Z$. Then any point $p \in Z$ admits an open neighbourhood $U\subset Z$ and a holomorphic map $\beta\colon U\rightarrow \C^2$ such that
\begin{enumerate}[label=\rm(\roman*)]\itemsep=0pt
	\item \label{thmtnnt1}${\rm d}\beta_1\wedge {\rm d}\beta_2 \neq 0$ pointwise on $U$;
	\item\label{thmtnnt2} if $q_1,q_2\in U$ are two distinct points with $\beta(q_1)=\beta(q_2)$, then both points lie on a common geodesic on $U\cap SM$---in particular, $\beta$ is injective on $U\backslash SM$.
\end{enumerate}
Consequentially $\mathcal O(U)=\{f\in C^\infty(U)\mid f \text{ holomorphic} \}$ separates points in $U\backslash SM$.
\end{Theorem}

In the context of involutive structures (cf.~\cite{Tre92}), the existence of local solutions to ${\rm d}\beta|_{\D} = 0$ with property \ref{thmtnnt1} is referred to as {\it local integrability}. For $p\in Z\backslash SM$, local integrability is guaranteed by the classical Newlander--Nirenberg theorem (e.g., \cite[Theorem~2.6.19]{Huy05}) and in this case property~\ref{thmtnnt2} is a consequence of \ref{thmtnnt1} and the inverse function theorem. The crux of the preceding theorem is that we obtain local integrability together with the separation property~\ref{thmtnnt2} also near points $p\in SM$, where $\D$ degenerates and the inverse function theorem cannot be applied.

 Since this is a local result, it is also valid on the projective twistor space $Z_\mathbb P$ and it appears like this is the first such integrability result beyond LeBrun--Mason's work in the Zoll setting.

\subsubsection{Application: Identity principles} Let $Z$ be the twistor space of an oriented Riemannian surface $(M,g)$ and let $\Sigma$ be a compact connected Riemann surface with non-empty boundary. We consider holomorphic maps
${f\colon \Sigma\rightarrow Z}$
and refer to these as {\it holomorphic curves} parametrised by $\Sigma$. Here holomorphic means that $f$ is smooth up to the boundary and satisfies $f_*\bigl(T^{0,1}\Sigma\bigr)\subset \mathscr D$.

An example of a holomorphic curve is the embedding of the closed unit disk $\DD=\{\mu\in \C\mid |\mu|\le 1\}$ as fibre of $Z$. Precisely, if $(x,v)\in SM$, then the fibre over $x$ is parametrised by $f(\mu)=(x,\mu\cdot v)$. The local point separation of Theorem~\ref{TNNT} has the following consequence.

\begin{Corollary}[identity principle for holomorphic curves]\label{cor_curves}
Let $f_1,f_2\colon \Sigma\rightarrow Z$ be holomorphic curves with $f_1=f_2$ on $\partial \Sigma$. Then $f_1=f_2$ on all of $\Sigma$.
\end{Corollary}

Let now $(M_i,g_i)$ ($i=1,2$) be two oriented Riemannian surfaces and $Z_1$ and $Z_2$ their associated twistor spaces. We say that a map $\Phi\colon Z_1\rightarrow Z_2$ is holomorphic, if it is smooth up to the boundary and satisfies $\Phi_*(\mathscr D_1)\subset \mathscr{D}_2$. Such a map $\Phi$ is determined by its action on the fibres of $Z_1$ and hence Corollary \ref{cor_curves} implies the following.

\begin{Corollary}[identity principle for holomorphic maps]\label{cor_maps}
Suppose $\Phi,\Psi\colon Z_1\rightarrow Z_2$ are two holomorphic maps with $\Phi|_{SM_1}=\Psi|_{SM_1}$. Then $\Phi=\Psi$ on $Z_1$.
\end{Corollary}

This identity principle is one of the key ingredients of our follow-up work \cite{BMP24b}, where we prove that biholomorphisms between transport twistor spaces exhibit rigidity: if the underlying metrics are simple or Anosov, then every biholomorphism is, up to constant rescaling and the antipodal map, the lift of an orientation preserving isometry. As the restriction of a biholomorphism to unit tangent bundles always induces an orbit equivalence,
this is closely related to questions such as conjugacy rigidity and lens rigidity (see, e.g., \cite{CGL23,GoRo23,GLP25,GM18} for some recent related works). We refer to \cite{BMP24b} for further details.

\subsubsection[Strategy for the proof of Theorem 1.3]{Strategy for the proof of Theorem~\ref{thm_globalbeta}}
As already pointed out, the map $\beta_g$ in Theorem~\ref{thm_globalbeta} is canonically constructed using the geodesic X-ray transform together with the inverse of its normal operator. The first step in the proof consists of showing that this canonical construction agrees with the explicit maps from Theorem~\ref{thm_constantcurvature}; here we exploit the rotational symmetry to avoid the difficult task of inverting normal operators. Once this is achieved, the core idea of the proof is simple---we show that
$g\mapsto \beta_g$ is continuous in a suitable sense and ensure that the notion of holomorphic blow-down structure (Definition~\ref{def_bds}) survives small perturbations. Implementing this idea is technically challenging. The continuous dependence involves many moving pieces, such as changes in the $C_\alpha^\infty$-structure or changes of the inverse of the normal operator in the Grubb--H{\"o}rmander framework used in \cite{MNP19a}. At the same time, the perturbation theory of the holomorphic blow-down structure takes place on the non-compact manifold $Z^\circ$ and this requires dealing with a metric-dependent degeneracy of the complex structure.

\subsection{Structure of the article} Section~\ref{s_prelim} discusses preliminaries, including fibrewise Fourier analysis, X-ray transforms and their normal operators, the $C^{\infty}_{\alpha}$-structure, the twistor space and the Hermitian metric we will use.
Section~\ref{s_betamaps} introduces the canonical $\beta$-maps for simple surfaces and sets up the framework for the perturbation theory.
Section~\ref{s_fra} is devoted to the constant curvature models and proves Theorem~\ref{thm_constantcurvature}.
In Sections \ref{s_upsilon} and \ref{s_verstappen},
 various continuous metric dependencies are carefully tracked. Section~\ref{s_bds} contains the central openness statement needed for the perturbation theory.
The proofs of the main theorems and corollaries are completed in Section~\ref{s_tnnt}. Finally, the paper is supplemented with two appendices providing facts needed for the continuity arguments; some of these facts were not readily available in the literature in the form we required.\looseness=1

\subsubsection{Further directions} It would be very interesting to exhibit maps with a holomorphic blow-down structure for the transport twistor space of further Riemannian surfaces---a natural setting where one might expect such blow blowns is that of so-called simple surfaces, see also the discussion in Section~\ref{s_canonicalbeta} and Question~\ref{questionsimplebds}.
However, simplicity is not a necessary condition, as recently demonstrated by the second author and Qi by constructing explicit maps with holomorphic blow-down structure for certain non-simple Herglotz metrics on the disk~\cite{MoQi25}.

The existence of a map with holomorphic blow-down structure has several implications on the complex geometry of transport twistor space, e.g., one immediately sees that the interior points of $Z$ are separated by the holomorphic functions $f\colon Z\to \C$. Proving such a separation result for geometries beyond the present perturbative setting would be of independent interest.

\section{Preliminaries}\label{s_prelim}

\subsection{Fibrewise Fourier analysis}\label{s_ffa} Let $(M,g)$ be an oriented Riemannian surface. The vertical vector field $V$ on $SM$ is defined as the infinitesimal generator of the circle action $(x,v)\mapsto \bigl(x,{\rm e}^{{\rm i}t}v\bigr)$, $t\in \R$. Defining
\[
	\Omega_k=\{f\in C^\infty(SM)\mid Vf={\rm i}k f\},\qquad k\in \Z,
\]
any $u\in C^\infty(SM)$ has a unique decomposition $u=\sum_{k\in \Z}u_k$ into its vertical Fourier modes $u_k\in \Omega_k$. Functions that only have non-trivial Fourier modes in degrees $k\ge 0$ are called {\it fibrewise holomorphic}.
There are natural isomorphisms
\begin{equation}\label{modeiso}
	\Omega_k \cong C^\infty\bigl(M,\otimes^k \bigl(T^{1,0}M\bigr)^*\bigr)\qquad \text{and} \qquad \Omega_{-k} \cong C^\infty\bigl(M,\otimes^k \bigl(T^{0,1}M\bigr)^*\bigr),\qquad k\ge 0,
\end{equation}
where a $k$-tensor is pulled back to a function in $\Omega_k\subset C^\infty(SM)$ by inserting the velocity variable $k$-times.

Let now $Z$ be the twistor space of $(M,g)$ and consider the algebra of holomorphic functions on $Z$, assumed to be smooth up to the boundary
\[ \mathcal A(Z)=\{f\in C^\infty(Z)\mid f\text{ holomorphic}\}.\]
As mentioned in Section~\ref{s_correspondences}, there is an isomorphism
\begin{equation}\label{def_aoriginal}
\mathcal A(Z)\xrightarrow{\sim} \biggl\{u\in C^\infty(SM) \mid Xu=0,\, u={\sum_{k\ge 0}}u_k\biggr\},
\end{equation}
sending a holomorphic function on $Z$ to its restriction on $SM$. For $m\ge 0$, define
\begin{gather*}
\mathcal A_m(Z):=\biggl\{f\in \mathcal A(Z)\mid f|_{SM}=\sum_{k\ge m}u_k\biggr\},\\
 \mathcal H_m := \big\{a\in C^\infty\bigl(M,\otimes^m\bigl(T^{1,0}M\bigr)^*\bigr)\mid \bar \partial a=0\big\}.
\end{gather*}
Given an element $f\in \mathcal A_m(Z)$, we may select the $m$-th Fourier mode of $f|_{SM}$ and---by means of isomorphism \eqref{modeiso}---view this as a section $a\in C^\infty\bigl(M,\bigl(T^{1,0}M\bigr)^m\bigr)$. We then have $a\in \mathcal H_m$, the space of holomorphic $m$-differentials, that is, tensors that are locally of the form $A(z) {\rm d}z^m$ for a holomorphic function $A(z)$.
Writing $a=\pi_{m*}(f)$, the situation is neatly summarised in the following (not necessarily exact) sequence:
\[
	0 \rightarrow \mathcal A_{m+1}(Z) \hookrightarrow \mathcal A_m(Z) \xrightarrow{\pi_{m*}} \mathcal{H}_m\rightarrow 0.
\]

\subsection{X-ray transforms and normal operator}\label{s_geoxray} Let $(M,g)$ be a manifold of dimension $d\ge 2$ that is non-trapping
and has a strictly convex boundary.
The
 {\it geodesic X-ray transform} on $(M,g)$ is defined as map
\[
	I\colon\ C_c^\infty(SM^\circ)\rightarrow C^\infty_c(\partial_+SM^\circ),\qquad If(x,v)=\int_0^{\tau(x,v)} f(\varphi_t(x,v)) {\rm d}t,
\]
where $(\varphi_t)$ is the geodesic flow on $SM$ and $\tau(x,v)\ge 0$ is the hitting time of the orbit $\varphi_t(x,v)$ with $\partial SM$. Let $\rho\colon M\to [0,\infty)$ be a boundary defining function of $M$ (that is, $\partial M= \{\rho=0\}$ and ${\rm d}\rho \neq 0$ on $\partial M$) and consider this as function on $SM$. Then the X-ray transform extends to a continuous map
\begin{equation}\label{def_xray}
	I\colon\ \rho^{-1/2}C^\infty(SM)\rightarrow C_\alpha^\infty(\partial_+SM),
\end{equation}
where $C_\alpha^\infty(\partial_+SM)$ denotes the space of smooth maps $h\colon \partial_+SM\rightarrow \C$ for which the flow-invariant extension $h^\sharp$ is smooth on all of $SM$ (i.e., $h^\sharp \in C^\infty(SM), Xh^\sharp =0$ and $h^\sharp = h$ on $\partial_+SM$). If~${\pi\colon SM\rightarrow M}$ is the base point projection, then we define $I_0=I\circ \pi^*$ and the {\it normal operator}~by
\begin{gather*}
	N_0\colon \ \rho^{-1/2}C^\infty(M)\rightarrow C^\infty(M),\\
 N_0 f(x):= \pi_*(I_0 f)^\sharp(x) = 2\int_{S_x M} {\rm d}v \int_0^{\tau(x,v)} f\circ \varphi_t(x,v) {\rm d}t.
\end{gather*}
We may view $N_0$ as $I_0^* I_0$, where $I_0^*$ is the formal adjoint with respect to the {\it symplectic measure} on $\partial_+SM$. This is given by $g(\nu(x),v) {\rm d}\Sigma^{2d-2}$, where ${\rm d} \Sigma^{2d-2}$ is the volume form for the {Sasaki metric}. For the corresponding norm, we use the notation
\[
|| h ||_{\mathrm{sym}}^2 = \int_{\partial_+SM} |h(x,v)|^2\cdot g(v,\nu(x)) {\rm d}\Sigma^{2d-2}(x,v),\qquad h\in C_\alpha^\infty(\partial_+SM).
\]
(This is frequently denoted $||\cdot||_\mu$ or $||\cdot||_{L^2_\mu(\partial_+SM)}$, but we avoid the $\mu$-notation, as to not cause confusion with the canonical variables on twistor space.)

\subsubsection{Two-dimension case}
If $d=2$, we may restrict the X-ray transform to the $k$-th Fourier mode to obtain maps
\[
I_k\colon\ \rho^{-1/2} \Omega_k\rightarrow C_\alpha^\infty(\partial_+SM),\qquad k\in \Z.
\]
For $k=0$, this corresponds to $I_0$, as defined above, if we identify $\Omega_0$ with $C^\infty(M)$. Writing $(\cdot)_k$ for the projection onto the $k$-th mode, we define normal operators
\[
	N_k\colon\ \rho^{-1/2}\Omega_k\rightarrow \Omega_k,\qquad N_k f = \bigl((I_k f)^\sharp\bigr)_k.
\]

\subsection[The C\_alpha\^{}infty-structure]{The $\boldsymbol{C_\alpha^\infty}$-structure}\label{s_calpha}

Let $(M,g)$ be a non-trapping manifold with strictly convex boundary. The space $C_\alpha^\infty(\partial_+SM)$, defined below \eqref{def_xray}, can be viewed as space of {\it all} smooth functions with respect to a modified smooth structure on $\partial_+SM$.
To see this, we embed $M$ into the interior of a slightly larger manifold $\bigl(\hat M,g\bigr)$ which is also non-trapping and has a strictly convex boundary. Let \smash{$\hat \tau \in C^\infty(SM,[0,\infty))$} denote the exit time of \smash{$\hat M$} and consider the smooth map
\begin{equation}\label{def_foldmap}
	F_g\colon \ \partial SM\rightarrow \partial S\hat M,\qquad F_g(x,v) = \hat \varphi_{-\hat \tau(x,-v)}(x,v),
\end{equation}
where $(\hat \varphi_t)$ is the geodesic flow of $\bigl(\hat M,g\bigr)$. This encodes the scattering relation $\alpha$ in the sense that~${F_g(x,v)=F_g(x',v')}$ if and only if $\alpha(x,v)=(x',v')$.
 As discussed in \cite[Section~5.2]{PSU23}, the map~$F_g$ restricts to an embedding on $\partial_+SM^\circ$ and has the structure of a {\it Whitney fold} near every point~$(x_0,v_0)\in \partial_0SM$. The latter property means that there are local coordinates~${(u_1,\dots, u_{2d-2})}$ and~$(v_1,\dots, v_{2d-2})$, centred at $(x_0,v_0)$ and $F_g(x_0,v_0)$, respectively, in which~$F_g$ has the following normal form:
\begin{equation}\label{whitneynormalform}
	F_g(u_1,u_2,\dots, u_{2d-2}) = \bigl(u_1^2,u_2,\dots, u_{2d-2}\bigr).
\end{equation}
As an immediate consequence, $S_g=F_g(\partial_+SM)\subset \partial S\hat M$ is a smooth submanifold with boundary and $F_g\colon \partial_+SM\rightarrow S_g$ is a homeomorphism.

\begin{Definition}
	On $\partial_+SM$, a new smooth structure is defined by transferring the smooth structure of $S_g$ via the homeomorphism $F_g\colon \partial_+SM\rightarrow S_g$. The structure will be referred to as {\it $C_\alpha^\infty$-structure} and its maximal atlas will be denoted by $\mathcal A_\alpha$.
\end{Definition}

The space of $\mathcal A_\alpha$-smooth functions $C^\infty(\partial_+SM,\mathcal A_\alpha)$ coincides with $C^\infty_\alpha(\partial_+SM)$, as we can verify by means of the following chain of equalities
\[
	C_\alpha^\infty(\partial_+SM) =F_g^*C^\infty\bigl(\partial S\hat M\bigr) = F_g^*C^\infty(S_g) = C^\infty(\partial_+SM,\mathcal A_\alpha).
\]
The first equality is a consequence of \cite[Theorem~C.4.4]{Hor07}, the second one follows from $S_g\subset \partial S \hat M$ being a smooth submanifold and the last one holds by definition.

\begin{Definition}\label{def_calphaemb}
	A map $f\colon \partial_+SM\rightarrow \R^m$ is called {\it $C_\alpha^\infty$-embedding}, if it is an injective $\mathcal A_\alpha$-smooth immersion. (Equivalently, if it is a smooth embedding of $(\partial_+SM,\mathcal A_\alpha)$.)
\end{Definition}

\begin{Remark} In the local coordinate system from \eqref{whitneynormalform}, any $f\in C_\alpha^\infty(\partial_+SM,\R^m)$ satisfies ${\partial_{u_1}|_{u_1=0}f = 0}$ and thus---using the standard smooth structure---it cannot be an immersion up to the boundary. For $\mathcal A_\alpha$-smooth immersions, one instead considers rescaled Jacobians, where~$\partial_{u_1}f$ is replaced by~$u_1^{-1}\partial_{u_1}f$.
\end{Remark}

\subsection{Hermitian manifolds}\label{s_metriccx} A Hermitian metric on a complex $n$-manifold $(X,J)$ is a Riemannian metric $G$ for which $J$ is an isometry. The real $(1,1)$-form $\Omega(\cdot,\cdot)=G(J\cdot,\cdot)$ is called the {\it fundamental $2$-form} of $G$, and with respect to a local frame $\eta_1,\dots,\eta_n$ of $\Lambda^{1,0}X$ it is given by
\begin{equation}\label{generalomegah}
\Omega = {\rm i} \sum_{j,k=1}^n H_{jk} \cdot \eta_j\wedge\bar\eta_k
\end{equation}
for a positive definite Hermitian $n\times n$-matrix $H=(H_{jk})$. We also refer to $\Omega$ itself as the Hermitian metric, with the understanding that $G=\Omega(\cdot,J\cdot)$ is the actual metric tensor. The associated Riemannian quantities will be labelled with $\Omega$,~e.g., $d_\Omega$ is the Riemannian distance function and $\vert\cdot\vert_\Omega^2$ is the induced inner product on forms.
Given two Hermitian metrics $\Omega$ and $\Omega'$ we write $\Omega \le \Omega'$ if and only if $G'-G$ is positive semi-definite (or equivalently if $H'-H\in \C^{n\times n}$ is positive semi-definite in any coordinate chart).

\subsubsection{Hermitian metrics on twistor space}
Let $(M,g)$ be an oriented Riemannian surface with twistor space $Z$. To study twistor space invariantly, it is convenient to consider $Z$ as the base of the principal circle bundle
\[
	\mathsf{p}\colon\ SM\times \DD\rightarrow Z,\qquad \mathsf{p}(x,v,\omega)=(x,\omega\cdot v),
\]
where the product $\omega\cdot v$ is defined by means of the complex structure of $T_x M$.
It turns out that~$\mathsf{p}^* \bigl(\Lambda^{0,1}Z^\circ\bigr)$ has a global frame consisting of
$1$-forms
$
\tau,\gamma \in \Omega^1(SM\times \DD^\circ,\C)$.
Let $\mathbf V$ be the vector field on $SM\times \DD$ that generates the circle action $(x,v,\omega)\act {\rm e}^{{\rm i}t}=\bigl(x,{\rm e}^{{\rm i}t}v,{\rm e}^{-{\rm i}t}\omega\bigr)$ ($t\in \R/2\pi \Z$) and denote with $\mathcal L_{\mathbf V}$ the associated Lie derivative. Then $\tau,\gamma \in \ker (\mathcal L_\mathbf{V} - {\rm i})$ and thus $\mathcal L_{\mathbf V}(\bar \tau\wedge \tau)=\mathcal L_{\mathbf V}(\bar \tau\wedge \gamma)=\dots = 0$. As a consequence, the $\mathsf{p}$-push-forwards of these $2$-forms exist and yields a global frame
\[
	\{ \mathsf{p}_*(\bar \tau\wedge \tau),\mathsf{p}_*(\bar \tau\wedge \gamma),\mathsf{p}_*(\bar \gamma\wedge \tau),\mathsf{p}_*(\bar \gamma\wedge \gamma)\}\subset \Lambda^{1,1}Z^\circ.
\]
Using this frame, we associate to any $H=(H_{ij})\in C^\infty\bigl(Z^\circ,\mathrm{Her}_2^+\bigr)$ a Hermitian metric
\[
	 \Omega={\rm i}\{H_{11} \mathsf{p}_*(\bar \tau\wedge \tau)+ H_{12} \mathsf{p}_*(\bar \tau\wedge \gamma)+H_{21} \mathsf{p}_*(\bar \gamma\wedge \tau)+H_{22} \mathsf{p}_*(\bar \gamma\wedge \gamma) \}
\]
on $Z^\circ$ and this gives the one-to-one correspondence from Section~\ref{s_hermitian}. In this article, we only work with the distinguished metric $\Om$ defined in \eqref{def_om}.
For further details on this invariant approach to twistor spaces and the definition of $\tau$, $\gamma$, we refer to \cite{BLP24,BoPa23}.

\subsection{Twistor space of conformally Euclidean disks}\label{s_conformallyeuclidean}
If $M=\DD=\{z\in \C\mid |z|\le 1\}$ is equipped with the conformally Euclidean metric ${\rm e}^{2\sigma}|{\rm d}z|^2$, where~${\sigma\in C^\infty(\DD,\R)}$, then the twistor space is biholomorphic to
\[
	Z=\big\{(z,\mu)\in \C^2\mid |z|,|\mu|\le 1\big\},\qquad \D_\sigma = \spn_\C(\Xi_\sigma,\partial_{\bar \mu}),
\]
where $\Xi_\sigma$ is a suitable complexification of the geodesic vector field, explicitly,
\begin{equation}
	\Xi_\sigma={\rm e}^{-\sigma} \bigl(\mu^2\partial_z+\partial_{\bar z} + \bigl(\mu^2 \partial_z\sigma - \partial_{\bar z}\sigma\bigr)(\bar \mu \partial_{\bar \mu} - \mu \partial_\mu) \bigr) \in C^\infty(Z,T_\C Z).
	\label{Xisig}
\end{equation}
The biholomorphism of this model (which, by abuse of notation, is also denoted $Z$), with the transport twistor space of $\bigl(\DD,{\rm e}^{2\sigma} |{\rm d}z|^2\bigr)$ is provided by the map
\begin{equation}\label{twistorisoconformal}
	Z\rightarrow Z\bigl(\DD,{\rm e}^{2\sigma}|{\rm d}z|^2\bigr),\qquad (z,\mu)\mapsto \bigl(z,{\rm e}^{-\sigma(z)}(\mu\partial_z+\bar \mu \partial_{\bar z})\bigr)\in T\DD.
\end{equation}
We refer to \cite[Section~4.2]{BoPa23} for more details and will henceforth understand this identification implicitly. A~direct computation shows that the dual frame to $\{\Xi_\sigma,\partial_{\bar \mu}\}$ is given by the following $1$-forms:
\[
	\Xi_\sigma^\vee = e^\sigma \frac{{\rm d}\bar z- \bar \mu^2 {\rm d}z}{1-|\mu|^4},\qquad \partial_{\bar\mu}^\vee = {\rm d}\bar \mu + \bar \mu\{\partial_{\bar z} \sigma {\rm d}\bar z - \partial_z \sigma {\rm d}z\}.
\]
Further, under the isomorphism in \eqref{twistorisoconformal} it turns out that $\bar \Xi_\sigma^\vee \wedge \Xi_\sigma^\vee = \mathsf{p}_*(\bar \tau\wedge \tau)$, $\bar \Xi_\sigma^\vee \wedge \partial_{\bar \mu}^\vee = \mathsf{p}_*(\bar \tau\wedge \gamma),\dots$ and thus the Hermitian metric encoded by \smash{$H\in C^\infty\bigl(Z^\circ,\mathrm{Her}_2^+\bigr)$} takes the form~\eqref{generalomegah}, when we choose $\{\eta_1,\eta_2\}=\{\bar {\Xi}^\vee_\sigma,\partial_\mu^\vee\}$ as frame of $\Lambda^{1,0}Z^\circ$. In particular, the distinguished metric~$\Om$ from Section~\ref{s_hermitian} is given by
\begin{equation}\label{def_omegasigma}
\Om_\sigma ={\rm i} \bigl(1-|\mu|^4\bigr)^2 \bar \Xi_\sigma^\vee \wedge \Xi_\sigma^\vee + {\rm i} \partial_\mu^\vee\wedge \partial_{\bar \mu}^\vee.
\end{equation}

\section[Canonical beta-maps and their perturbation theory]{Canonical $\boldsymbol{\beta}$-maps and their perturbation theory}\label{s_betamaps}

The Euclidean $\beta$-map arises in a rather systematic way. We will demonstrate this in the context of simple surfaces, where we define the notion of {\it $\beta$-extensions} and study their behaviour under perturbations of the metric.

\begin{Definition} \label{def_simple}A compact Riemannian surface $(M,g)$ is called {\it simple} if it is non-trapping, free of conjugate points and has a strictly convex boundary $\partial M$.
\end{Definition}

\subsection[Canonical beta-maps for simple surfaces]{Canonical $\boldsymbol{\beta}$-maps for simple surfaces}\label{s_canonicalbeta}

\subsubsection{Holomorphic extensions}\label{s_holext}
The Euclidean $\beta$-map in \eqref{euclidbeta} has the components $\beta^{(0)}(z,\mu)=z-\mu^2 \bar z$ and $\beta^{(1)}(z,\mu)=\mu$. In terms of the spaces $\mathcal A_m(Z)$ and $\mathcal H_m$ and the maps $\pi_{m*}$ defined in Section~\ref{s_ffa}, we may express this as
\begin{gather*}
	\beta^{(0)} \in \mathcal A_0(Z)\qquad \text{and} \qquad \pi_{0,*}\beta^{(0)} = z \in \mathcal H_0,\\
	\beta^{(1)} \in \mathcal A_1(Z)\qquad \text{and} \qquad \pi_{1,*}\beta^{(1)} = {\rm d}z \in \mathcal H_1.
\end{gather*}
That is, $\beta$ is a holomorphic extension of the tuple $(z,{\rm d}z)\in \mathcal H_0\times \mathcal H_1$. Due to a classical result of Pestov--Uhlmann \cite{PeUh05}, this type of holomorphic extension exists for general simple surfaces (see~\cite{BLP24,BoPa23} for a discussion in the context of twistor spaces).

\begin{theorem*}[holomorphic extensions]
Let $(M,g)$ be simple and $Z$ its transport twistor space. Then the following sequence is exact:
\[
	0\rightarrow \mathcal A_{m+1}(Z)\hookrightarrow \mathcal A_m(Z) \xrightarrow{\pi_{m*}} \mathcal H_m\rightarrow 0.
\]	
\end{theorem*}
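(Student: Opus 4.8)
The plan is to deduce this exactness statement from the classical Pestov--Uhlmann theory of the geodesic X-ray transform on simple surfaces, translated into the twistor language via the isomorphism $\mathcal A(Z)\cong \ker X|_{C^\infty(SM), \text{fibrewise holo}}$. The only non-trivial point is surjectivity of $\pi_{m*}\colon \mathcal A_m(Z)\to \mathcal H_m$; the inclusion $\mathcal A_{m+1}(Z)\hookrightarrow\mathcal A_m(Z)$ is a tautology from the definitions, and exactness in the middle is immediate since $\ker \pi_{m*}$ consists exactly of those $f\in\mathcal A_m(Z)$ whose $m$th Fourier mode vanishes, i.e.\ of $f\in\mathcal A_{m+1}(Z)$. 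So everything reduces to: given a holomorphic $m$-differential $a\in\mathcal H_m$, produce $f\in\mathcal A_m(Z)$ with $\pi_{m*}f=a$.

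First I would unwind the target. Via \eqref{modeiso}, $a\in\mathcal H_m$ corresponds to $a_m\in\Omega_m$ with the property that, writing the geodesic vector field in the splitting $X=\eta_++\eta_-$ into its $+1$ and $-1$ components (raising/lowering the Fourier degree), the condition $\bar\partial a=0$ is equivalent to $\eta_-a_m=0$. Constructing $f\in\mathcal A_m(Z)$ amounts to finding $u=\sum_{k\ge m}u_k\in C^\infty(SM)$ with $Xu=0$ and $u_m=a_m$. This is precisely the statement that the holomorphic differential $a_m$ extends to a fibrewise-holomorphic first integral of the geodesic flow — which is exactly the Pestov--Uhlmann holomorphic integrating factor / surjectivity result for simple surfaces. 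Concretely, the standard route (as in \cite{PeUh05}, see also the twistor-space discussions in \cite{BoPa23,BLP23}) is: solve the transport equation $Xu=-\eta_+ a_m$ with $u$ fibrewise holomorphic of degree $\ge m+1$, using the fact that on a simple surface the transport equation $Xw=h$ is solvable with a fibrewise-holomorphic solution whenever $h$ is fibrewise holomorphic and satisfies the natural (moment/range) condition — here $h=-\eta_+a_m$ has degree $m+1$, and $\eta_-a_m=0$ is what makes the relevant obstruction vanish. Then $u+a_m$ (suitably assembled mode by mode, or in one stroke via the Pestov--Uhlmann construction) is a first integral in $\bigoplus_{k\ge m}\Omega_k$, hence defines the desired $f\in\mathcal A_m(Z)$ restricting to it on $SM$; that $f$ is smooth up to the boundary of $Z$ is built into the definition of $\mathcal A(Z)$ and the regularity in the Pestov--Uhlmann result.

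The step I expect to be the main obstacle — or rather the one requiring the most care — is the surjectivity input itself: one must invoke the correct version of the Pestov--Uhlmann theorem that produces a \emph{smooth-up-to-the-boundary}, \emph{fibrewise holomorphic} first integral with prescribed leading mode, rather than merely a distributional or $L^2$ solution, and one must check that the range/solvability hypothesis is automatically met because the source $\eta_+a_m$ is itself fibrewise holomorphic of positive degree (so it is orthogonal to the relevant finite-dimensional obstruction space, which lives in low Fourier degrees). For $m\ge 1$ this is clean; for $m=0$ the statement $\mathcal A_0(Z)\twoheadrightarrow\mathcal H_0=C^\infty(M)\cap\ker\bar\partial=\{\text{holomorphic functions}\}$ is the classical Pestov--Uhlmann holomorphic-extension theorem verbatim. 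In practice I would simply cite the theorem in the form already recorded in \cite{PeUh05,BoPa23,BLP23} and spend the proof clarifying the dictionary between $(\mathcal A_m(Z),\pi_{m*})$ and $(\ker X, \text{leading mode})$, so that the cited surjectivity gives exactness of \eqref{def_sequence}; the left-exactness and middle-exactness are then one-line verifications from the definitions in \textsection\ref{s_ffa}.
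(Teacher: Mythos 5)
Your proposal is correct and takes the same route as the paper, which does not prove this theorem at all but simply records it as a consequence of the Pestov--Uhlmann surjectivity result \cite{PeUh05}, with the translation into the twistor dictionary (exactly the $\eta_\pm$/mode bookkeeping you spell out) deferred to \cite{BoPa23,BLP23}. Your reduction to surjectivity of $\pi_{m*}$, the observation that left- and middle-exactness are definitional, and the identification of $\mathcal H_m$ with $\ker\eta_-\big|_{\Omega_m}$ as the obstruction-free setting for Pestov--Uhlmann, are all consistent with how the cited works set this up.
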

Of course, the holomorphic extension of $(z,{\rm d}z)$ is far from unique, but we shall see below that there is a canonical choice that is consistent with the Euclidean $\beta$-map.

\subsubsection{Canonical first integrals}
 Let us first discuss a canonical choice of first integral on $SM$, that is based on the following result from \cite{MNP19a}.

\begin{Proposition}[isomorphism property]\label{prop_isonk}
	Let $(M,g)$ be simple and $k\in \Z$. Then the normal operator $N_k\colon \rho^{-1/2}\Omega_k\to \Omega_k$ {\rm(}defined in Section~{\rm\ref{s_geoxray})} is an isomorphism.
\end{Proposition}

\begin{proof}
For $k=0$, this is part of \cite[Theorem 2.2]{MNP19a}. For $k\neq 0$, we introduce the attenuation $a= {\rm e}^{-{\rm i}k\theta} X\bigl({\rm e}^{{\rm i}k\theta}\bigr)\in \Omega_{-1}\oplus\Omega_1$, where the angle $\theta$ is chosen with respect to a trivialisation of $SM$. One can check that $N_k$ fits into the commutative diagram
\[
	\begin{tikzcd}
		\rho^{-1/2}\Omega_0 \arrow["N_{a,0}"]{r} \arrow[swap,"\times {\rm e}^{{\rm i}k\theta}"]{d}& \Omega_0 \arrow["\times {\rm e}^{{\rm i}k\theta}"]{d}\\
		\rho^{-1/2}\Omega_k \arrow["N_k"]{r} & \Omega_k,
	\end{tikzcd}
\]
where $N_{a,0}=I_{a,0}^*I_{a,0}$ is the normal operator of the attenuated X-ray transform $I_{a,0}$ considered in~\cite[Section~12.2]{PSU23}. It is then enough to prove the corresponding isomorphism property for~$N_{a,0}$ and this can be done by the same methods as in~\cite{MNP19a}.
\end{proof}

\begin{Proposition}[canonical first integrals]\label{minimalsolution} Let $(M,g)$ be simple and $f\in \Omega_k$ for some $k\in \Z$. Then there exists a solution $u\in C^\infty(SM)$ to the problem
\begin{equation}\label{minimalsolution0}
		X u = 0 \qquad \text{and} \qquad u_k = f
\end{equation}
and this solution is unique, if one requires any of the following equivalent conditions:
\begin{enumerate}[label=\rm (\roman*)]\itemsep=0pt
	\item\label{minimalsolution1} $||u|_{\partial_+SM}||_{\mathrm{sym}}$ is minimal amongst all solutions to \eqref{minimalsolution0};
	
	\item\label{minimalsolution2} $u\vert_{\partial_+SM}$ lies in the range of $I_k\colon \rho^{-1/2}\Omega_k \rightarrow C_\alpha^\infty(\partial_+SM)$;
	
	\item\label{minimalsolution3} $u = \big[I_k N_k^{-1} f\big]^\sharp$.
\end{enumerate}
\end{Proposition}

\begin{proof}
Define $u$ be the formula in \ref{minimalsolution3} and let $h=u|_{\partial_+SM}\in C^\infty_\alpha(\partial_+SM)$, then $I_k^* h = f$. Suppose that $h=I_k a$ for some $a\in \rho^{-1/2}\Omega_k$, then we must have $N_k a = I_k^*I_k a = f$ and hence $a=N_k^{-1} f$. This shows that \ref{minimalsolution2} and \ref{minimalsolution3} are equivalent. Let $h'=u'|_{\partial_+SM}$ for any other solution~$u'$ to \eqref{minimalsolution0}. Then
$
	\langle h, h-h'\rangle_{\mathrm{sym}}=\langle I_k a, h-h'\rangle_{\mathrm{sym}}= \langle a, I_k^*(h-h')\rangle_{L^2(SM)}=0
$ and hence
\[
|| h ||_\mathrm{sym}^2 =|\langle h, h'\rangle_\mathrm{sym}| \le || h ||_\mathrm{sym}\cdot || h' ||_\mathrm{sym},
\]
by the Cauchy--Schwarz inequality. Hence
 \ref{minimalsolution2} implies \ref{minimalsolution1}. Vice versa, if $h'$ is also an energy minimiser, then in the previous display we have equality, and thus $h$ and $h'$ have to be linearly dependent, which is only possible if $u=u'$.
\end{proof}

As discussed in Section~\ref{s_holext}, the Euclidean $\beta$-map from \eqref{euclidbeta} is a holomorphic extension of~$(z,{\rm d}z)$. In view of the preceding proposition, we now see that it is singled out amongst all such extensions by having minimal $||\cdot||_\mathrm{sym}$-energy.

\begin{Proposition}[extremality of Euclidean $\beta$-map] \label{extramlbeta} Let $M=\DD$, equipped with the Euclidean metric. Then the map $\beta(z,\mu)=\bigl(z-\mu^2\bar z,\mu\bigr)$ satisfies
\[\beta(z,\mu)=\bigl(\big[I_{0}N_{0}^{-1}(z)\big]^{\sharp}, \big[I_{1}N_{1}^{-1}({\rm d}z)\big]^{\sharp}\bigr),\qquad (z,\mu)\in S\DD.\]
$($Here ${\rm d}z$ is viewed as element of $\Omega_1$ via the isomorphism in \eqref{modeiso}.$)$
\end{Proposition}

This proposition is proved together with the constant curvature case in Section~\ref{s_fra}---see also Theorem~\ref{thm:preimages}, where it is restated in context. When the metric is rotation-invariant, the proof relies on the unique identifiability of certain geodesic first integrals which are also rotation-equivariant, see Lemma~\ref{lem:vanishing} below.

\subsubsection[Definition of beta-maps on simple surfaces]{Definition of $\boldsymbol{\beta}$-maps on simple surfaces}

Let $(M,g)$ be a simple surface. Guided by the preceding considerations, we now describe how to canonically extend a pair $(f,{\rm d}f)\in \mathcal H_0\times \mathcal H_1$ to a holomorphic map $\beta\colon Z\rightarrow \C^2$ on twistor space. As the first integrals in Proposition~\ref{minimalsolution} might not always be fibrewise holomorphic (even if $k\ge 0$), we introduce the Szeg\H{o} projectors $\mathbb S_{\ev},\mathbb S_{\mathrm{odd}}\colon C^\infty(SM)\rightarrow C^\infty(SM)$
\[
\mathbb S_{\ev}u=\sum_{k\ge 0} u_{2k}\qquad \text{and} \qquad \mathbb S_{\mathrm{odd}} u = \sum_{k\ge 0} u_{2k+1}.
\]
The Szeg\H{o} projections of an invariant function remain invariant if the lowest mode (i.e., $u_0$ or~$u_1$) is in the kernel if $\eta_-$.

\begin{Definition}\label{def_betaextension}
 Let $(M,g)$ be a simple surface and $f\colon M\rightarrow \C$ a holomorphic map. The {\it $\beta$-extension} of $f$ is defined as the holomorphic map $\beta\colon Z \rightarrow \C^2$ determined by
 \[
 	\beta|_{SM} = \bigl(\mathbb S_{\ev} \bigl(I_0N_0^{-1} f \bigr)^\sharp, \mathbb S_{\mathrm{odd}} \bigl(I_1N_1^{-1}{\rm d}f\bigr)^{\sharp}\bigr),
 \]
via the isomorphism \eqref{def_aoriginal}.
\end{Definition}

\begin{Remark}\label{rmkonglobbeta}
The $\beta$-map in Theorem~\ref{thm_globalbeta} is precisely the $\beta$-extension of an appropriate holomorphic embedding $f\colon M\rightarrow \C$. If $g$ is already known to be conformally Euclidean, then one can simply take $f(z)=z$.
\end{Remark}

This leads to the following question, which currently seems to be out of reach.

\begin{Question}\label{questionsimplebds}
	Does every simple surface $(M,g)$ admit an embedding $f\colon M\rightarrow \C$ such that the $\beta$-extension $\beta\colon Z\rightarrow \C^2$ has a holomorphic blow-down structure?
\end{Question}

\subsection[Perturbation theory for beta-maps]{Perturbation theory for $\boldsymbol{\beta}$-maps}\label{s_perturbationtheory}

Instead of addressing Question \ref{questionsimplebds} directly, we prove that the canonical $\beta$-maps depend continuously on the metric and that the property of having a holomorphic blow-down structure is preserved under perturbations. In view of the Riemann mapping theorem (or rather its version with continuous dependence on the metric, discussed in Appendix~\ref{s_RMT}), it suffices to consider conformally Euclidean metrics and we shall do this from now on.

\subsubsection{Continuous dependence on the conformal factor} We now consider the $\beta$-extensions from Definition~\ref{def_betaextension} in the conformally Euclidean case. For this, write
\[
	\Sigma_\mathrm{s} =\big\{\sigma \in C^\infty(\DD,\R)\mid {\rm e}^{2\sigma}|{\rm d}z|^2 \text{ is simple} \big\}
\]
and note that this is an open set in the $C^\infty$-topology.
As perturbing $\sigma$ does not change the complex structure of $M$, the spaces $\mathcal H_m$ of holomorphic $m$-differentials remain unchanged and~${(z,{\rm d}z)\in \mathcal H_0\times \mathcal H_1}$. Further, via the isomorphism in \eqref{twistorisoconformal}, the $\beta$-extensions of $f(z)= z$ for varying $\sigma$'s can all be considered as maps on the same space $Z=\DD\times \DD$
\[
	\beta_\sigma \colon\ Z \rightarrow \C^2,\qquad \sigma\in \Sigma_{\mathrm{s}}.
\]
We collect the relevant properties of these maps in the following theorem, proved in Section~\ref{s_proof_betafamily}.

\begin{Theorem} \label{thm_betafamily}
Let $Z=\DD\times \DD$ and $\sigma \in \Sigma_\mathrm{s}$. Then,
\begin{enumerate}[label=\rm(\roman*)]\itemsep=0pt
	\item \label{betafamily1} $\beta_\sigma\colon Z\rightarrow \C^2$ is $\D_\sigma$-holomorphic, i.e., $\Xi_\sigma \beta_\sigma= \partial_{\bar \mu} \beta_{\sigma}=0$;
	
	\item \label{betafamily2} if $\sigma=-\log\bigl(1+\kappa|z|^2\bigr)$, then $\beta_\sigma$ equals the map $\beta_\kappa$ from Theorem~{\rm\ref{thm_constantcurvature}};
	
	\item \label{betafamily3}the following map is continuous:
	\[
		\sigma\mapsto \beta_\sigma,\qquad \Sigma_\mathrm{s}\rightarrow C^\infty\bigl(Z,\C^2\bigr).
	\]
\end{enumerate}
\end{Theorem}

Of course, \ref{betafamily1} follows directly by construction and \ref{betafamily2} is a consequence of Proposition~\ref{extramlbeta} (or rather, its extension to constant curvature). The novel part of the theorem is thus the continuity claim and this is proved by keeping track of the metric dependencies at each step in the construction in Section~\ref{s_canonicalbeta}.

\begin{Remark}
There is an alternative way to obtain a continuous family as in Theorem~\ref{thm_betafamily}\,\ref{betafamily3}, at least in the vicinity of a fixed background map, say $\beta_0$. Consider the vector field $\mathbb X_\sigma = \bar \mu \Xi_\sigma|_{S\DD}={\rm e}^{-\sigma}(X+\lambda_\sigma V)$ on $S\DD$, where $\lambda_\sigma(z,\mu)=\mu \partial_z\sigma - \bar\mu\partial_{\bar z}\sigma$. This generates the geodesic flow of ${\rm e}^{2\sigma}|{\rm d}z|^2$ when projecting it onto $S\DD$ (see Remark \ref{rmk_xpres}). Similar to \cite[Proposition~3.1]{BoPa23}, one can show for $\sigma\in \Sigma_\mathrm{s}$ that
\[
	\mathbb X_\sigma\colon\ \bigoplus_{k\ge 0}\Omega_k\rightarrow \bigoplus_{k\ge -1}\Omega_k\quad \text{is onto},
\]	
with a right inverse $R_\sigma$ obeying estimates $||R_\sigma(f)||_{H^{s}} \le C_s(\sigma)\cdot || f||_{H^{s+1}}$ in a Sobolev scale $(H^s(S\DD)\mid s\ge 0)$. Viewing $\beta_0$ as $\C^2$-valued function in $\bigoplus_{k\ge 0}\Omega_k\cap \ker X$, we set
\[
	\beta'_\sigma : = \beta_0 - R_\sigma({\rm e}^{-\sigma}\lambda_\sigma V\beta_0),\qquad \sigma\in \Sigma_\mathrm{s}.
\]
This is automatically fibrewise holomorphic and lies in the kernel of $\mathbb X_\sigma$, hence it extends to a~holomorphic function on $(Z,\D_\sigma)$. Moreover,
\[
	||\beta_\sigma' - \beta_0 ||_{H^s(S\DD)} \lesssim C_s(\sigma) ||{\rm e}^{-\sigma}\lambda_\sigma||_{H^{s+1}(S\DD)} = C_s(\sigma) \times o(1),\qquad \sigma\rightarrow 0 \qquad \text{in}\  C^\infty(\DD,\R).
\]
Showing continuity of the family $(\beta'_\sigma\mid \sigma\in \Sigma_\mathrm{s})$ at $\sigma=0$ thus requires control of the constants~$C_s(\sigma)$ in~$\sigma$. Getting this control is possible by slightly softer methods than used in the proof of Theorem~\ref{thm_betafamily}\,\ref{betafamily3}. However, the so-obtained $\beta$-maps are not canonical in any sense.

\end{Remark}

\subsubsection{Openness of the holomorphic blow-down structure} \label{s_openness}
Define the set
\[
	\mathcal I = \big\{(\beta,\sigma)\in C^\infty\bigl(Z,\C^2\bigr) \times \Sigma_\mathrm{s} \mid \Xi_\sigma \beta = \partial_{\bar \mu} \beta =0 \big\}.
\]
This is closed with respect to the ambient $C^\infty$-topology and contains as subset the collection of tuples with holomorphic blow-down structure
\[
	\mathcal I_+=\{(\beta,\sigma)\in \mathcal I \mid \text{$\beta$ has holomorphic blow-down structure with respect to~} \D_\sigma~\text{and}~\Om_\sigma\}.
\]

The following theorem is proved in Section~\ref{pf_openbds}.

\begin{Theorem}\label{thm_openbds} $\mathcal I_+\subset \mathcal I$ is open.
\end{Theorem}

The two preceding results (see Theorems~\ref{thm_betafamily} and~\ref{thm_openbds}) immediately imply Theorem~\ref{thm_globalbeta} for conformal perturbation. For details on this, we refer the reader to the discussion in Section~\ref{proof_globalbeta}, where also non-conformal perturbations are addressed.

\section[Explicit beta-maps for constant curvature models]{Explicit $\boldsymbol{\beta}$-maps for constant curvature models}\label{s_fra}

In this section, we show that for simple geodesic discs in constant curvature, explicit maps with holomorphic blow-down structure can be constructed (see Theorem~\ref{thm:betaCC} below), and that this map is canonical in the sense of Definition~\ref{def_betaextension} (see Theorem~\ref{thm:preimages} below). We first state the main results in Section~\ref{ssec:main} before covering the proofs of Theorems~\ref{thm:betaCC} and~\ref{thm:preimages} in Sections~\ref{ssec:proofbetaCC} and~\ref{ssec:proofpreimages}, respectively.

\subsection{Main results} \label{ssec:main}

Fix $\kappa\in \Rm$, and let $g_\kappa = {\rm e}^{2\sigma} |{\rm d}z|^2$ on $M = \Dm_R = \big\{z\in \Cm \mid |z|^2\le R^2\big\}$, where ${\rm e}^{2\sigma} = (1+\kappa z\zbar)^{-2}$. When $\big|\kappa R^2\big|<1$, $(\Dm_R, g_\kappa)$ is a simple geodesic disk of constant curvature $4\kappa${, with transport twistor space denoted $Z_{\kappa,R}\approx \Dm_R\times \Dm$}. The first main result of this section is an explicit construction of a blowdown map for any such surface.

\begin{Theorem}\label{thm:betaCC}
 For $\kappa\in \Rm$ and $R>0$ such that $\big|\kappa R^2\big|<1$, the map $\beta \equiv \beta^\kappa$ given by
 \begin{align}
	 \beta (z,\mu) = (w,\xi) = \left( \frac{z-\mu^2\zbar}{1+\kappa \zbar^2 \mu^2}, \mu \frac{1+\kappa z\zbar}{1+\kappa\zbar^2 \mu^2} \right), \qquad (z,\mu)\in Z_{\kappa,R},
	\label{eq:betakappa}
 \end{align}
 has a holomorphic blow-down structure.
\end{Theorem}

\begin{Remark}
	One can arrive at expression \eqref{eq:betakappa} by exploiting the rotation-invariance of $g_\kappa$. In this case, one seeks $\beta|_{SM}$ as an $\Sm^1$-equivariant (in the sense that $\beta\bigl({\rm e}^{{\rm i}t}z,{\rm e}^{{\rm i}t}\mu\bigr) = {\rm e}^{{\rm i}t} \beta(z,\mu)$), fiberwise holomorphic first integral, whose Fourier modes can be solved one-by-one by solving ordinary differential equations in the radial base coordinate.
\end{Remark}

The second main result of this section shows that for the surfaces considered in this section, the $\beta$ map of Theorem~\ref{thm:betaCC} arises precisely as the $\beta$-extension of the map $f(z) = z$ (in the sense of Definition~\ref{def_betaextension}), in this case without the need for the Szeg\H{o} projectors $\mathbb S_{\ev}$, $\mathbb S_{\mathrm{odd}}$.

\begin{Theorem}\label{thm:preimages}
	Fix $\kappa\in \Rm$ and $R>0$ such that $|\kappa| R^2<1$. Then the $\beta$ map defined in Theorem~{\rm\ref{thm:betaCC}} satisfies
	\begin{align*}
		\beta|_{SM} =\bigl(\bigl(I_0 N_0^{-1} z\bigr)^\sharp, \bigl(I_1 N_1^{-1} {\rm d}z\bigr)^\sharp\bigr).
	\end{align*}
\end{Theorem}

Such a result exploits the fact that in the case of rotationally symmetric metrics, more can be said about the Fourier modes of invariant distributions satisfying certain equivariance properties, see Lemma \ref{lem:vanishing} below.

\subsection[Proof of Theorem 4.1]{Proof of Theorem~\ref{thm:betaCC}} \label{ssec:proofbetaCC}

Since the metric is conformally Euclidean as in Section~\ref{s_conformallyeuclidean}, the twistor space $Z_{\kappa,R}$ of $(\Dm_R, g_\kappa)$ has involutive distribution spanned by $\partial_{\mubar}$ and $\Xi = \Xi_\sigma$ as defined equation~\eqref{Xisig}. With ${\rm e}^{-\sigma} = 1+\kappa z \zbar$ here, this gives
\[ 
	\Xi = (1+\kappa z\zbar) \left( \mu^2 \partial_z + \partial_{\zbar} - \kappa \frac{\zbar \mu^2-z}{1+\kappa z\zbar} (\mubar \partial_{\mubar} - \mu\partial_\mu) \right).	
\]

\begin{proof}[Proof of Theorem~\ref{thm:betaCC}] We verify the three conditions in Definition~\ref{def_bds}.

(i) \smash{$\beta|_{ {\partial_+ S\Dm_R}}$ is a totally real $C_\alpha^\infty$-embedding.} We first show that \smash{$\beta|_{{\partial_+ S\Dm^{g_\kappa}_R}}$} is injective. Evaluating the map $\beta = (w,\xi)$ at a point $(z,\mu) = \bigl(R {\rm e}^{{\rm i}\omega}, {\rm e}^{{\rm i}(\omega+\alpha+\pi)}\bigr)$ in $\partial_+ S\Dm_R^{g_\kappa}$ (i.e., with $\omega\in\Sm^1$ and $\alpha\in [-\pi/2,\pi/2]$), a direct calculation gives
	\begin{align}
		(w,\xi) \bigl(R {\rm e}^{{\rm i}\omega}, {\rm e}^{{\rm i}(\omega+\alpha+\pi)}\bigr)= \left( R {\rm e}^{{\rm i}\omega} \frac{1-{\rm e}^{2{\rm i}\alpha}}{1+\kappa R^2 {\rm e}^{2{\rm i}\alpha}}, {\rm e}^{{\rm i}(\omega+\alpha+\pi)} \frac{1+\kappa R^2}{1+\kappa R^2 {\rm e}^{2{\rm i}\alpha}} \right).
		\label{eq:wxi_vertex}
	\end{align}
	Thus $w/\xi = \frac{2{\rm i}R\sin\alpha}{1+\kappa R^2}$ uniquely determines $\alpha$, and then $\omega$ is determined by either relation in \eqref{eq:wxi_vertex}.

Next we verify that \smash{$\beta|_{ {\partial_+ S\Dm_R^{g_\kappa}}}$} is a totally real $C_\alpha^\infty$-embedding, and we first discuss the smooth structure near the glancing. Observe that the scattering relation is the map $({\rm e}^{{\rm i}\omega},{\rm e}^{{\rm i}\alpha})\mapsto \bigl({\rm e}^{{\rm i}(\omega+\pi+2\ss(\alpha))}, {\rm e}^{{\rm i}(\pi-\alpha)}\bigr)$ where \smash{$\ss(\alpha) := \tan^{-1} \bigl( \frac{1-\kappa R^2}{1+\kappa R^2} \tan\alpha\bigr)$}, see, e.g., \cite[Section~4.1]{Monard2020}. Then two coordinates which are respectively odd and even with respect to the scattering relation in a~neighborhood of the glancing are $u_1 = \cos\alpha$ and $u_2 = \omega + \ss(\alpha)$, and hence the $C_\alpha^\infty$ smooth structure is precisely made of functions that are smooth in $u_1^2$ and $u_2$ in a neighbourhood of~$\partial_0 S\Dm_R^{g_\kappa}$. Then the totally real $C_\alpha^\infty$-embedding property is fulfilled by checking that
	\begin{align*}
		\det \left(\frac{\partial\beta}{\partial (u_1)^2}, \frac{\partial\beta}{\partial u_2}, J \frac{\partial\beta}{\partial (u_1)^2}, J\frac{\partial\beta}{\partial u_2}\right) = \frac{1}{4 u_1^2}\det \left(\frac{\partial\beta}{\partial u_1}, \frac{\partial\beta}{\partial u_2}, J \frac{\partial\beta}{\partial u_1}, J \frac{\partial\beta}{\partial u_2}\right)
	\end{align*}
	is nowhere vanishing near $\partial_0 S\Dm_R^{g_\kappa}$ (along with $\beta$ having a non-vanishing Jacobian at interior points). With $\frac{\partial}{\partial u_2} = \frac{\partial}{\partial \omega}$ and $\frac{\partial}{\partial u_1} = \frac{-1}{\sin\alpha} \bigl(\frac{\partial}{\partial\alpha} - \ss'(\alpha) \frac{\partial}{\partial\omega}\bigr)$, this amounts to showing that
	\begin{align*}
		\frac{1}{(\cos\alpha)^2}\det \left(\frac{\partial\beta}{\partial\omega}, \frac{\partial\beta}{\partial\alpha}, J \frac{\partial\beta}{\partial\omega}, J\frac{\partial\beta}{\partial\alpha}\right) (\omega,\alpha) \ne 0, \qquad \text{near }\ \partial_0 S\Dm_R^{g_\kappa}.
	\end{align*}
	To proceed, we write $\frac{\partial\beta}{\partial\omega} = V_1 + \overline{V_1}$ and $\frac{\partial\beta}{\partial\alpha} = V_2 + \overline{V_2}$, where
	\begin{align*}
		&V_1:= \frac{\partial w}{\partial\omega}\partial_w + \frac{\partial\xi}{\partial\omega} \partial_\xi = \frac{1}{1+\kappa R^2 {\rm e}^{2{\rm i}\alpha}} \bigl( R{\rm i} {\rm e}^{{\rm i}\omega} \bigl(1-{\rm e}^{2{\rm i}\alpha}\bigr) \partial_\omega + {\rm i} {\rm e}^{{\rm i}(\omega+\alpha+\pi)} \bigl(1+\kappa R^2\bigr) \partial_\xi\bigr), \\
		&V_2:= \frac{\partial w}{\partial\alpha}\partial_w + \frac{\partial\xi}{\partial\alpha} \partial_\xi = \frac{1+\kappa R^2}{\bigl(1+\kappa R^2 {\rm e}^{2{\rm i}\alpha}\bigr)^2} \bigl(-2{\rm i}R {\rm e}^{{\rm i}(\omega+2\alpha)} \partial_\omega + {\rm i} {\rm e}^{{\rm i}(\omega + \alpha+\pi)} \bigl(1-\kappa R^2 {\rm e}^{2{\rm i}\alpha}\bigr) \partial_\xi \bigr).
	\end{align*}
	Then
	\begin{align*}
		\det \left(\frac{\partial\beta}{\partial\omega}, \frac{\partial\beta}{\partial\alpha}, J \frac{\partial\beta}{\partial\omega}, J\frac{\partial\beta}{\partial\alpha}\right) &= \det \bigl(V_1 + \overline{V_1}, V_2 + \overline{V_2}, {\rm i} (V_1-\overline{V_1}), {\rm i} \bigl(V_2-\overline{V_2}\bigr)\bigr) \\
		&= -4 \det \bigl(V_1, V_2, \overline{V_1}, \overline{V_2}\bigr) = -4 \left| \frac{\partial w}{\partial\omega}\frac{\partial\xi}{\partial\alpha} - \frac{\partial w}{\partial\alpha} \frac{\partial\xi}{\partial\omega} \right|^2 \\
		&= -4 \frac{\bigl(1+\kappa R^2\bigr)^2}{\big|1+\kappa R^2 {\rm e}^{2{\rm i}\alpha}\big|^4} R^2 \big|1+{\rm e}^{2{\rm i}\alpha}\big|^2.
	\end{align*}
	Upon dividing both sides by $(\cos\alpha)^2$, we arrive at the result. The same calculation shows that~$\beta|_{(\partial_+ S\Dm_R^{g_\kappa})^\circ}$ is a local embedding at interior points.

(ii) \smash{$\beta|_{Z_{\kappa,R}^\circ}$} is a biholomorphism onto its image. That $\partial_{\mubar} w = \partial_{\mubar} \xi = 0$ is immediate, while $\Xi w = \Xi \xi = 0$ is a tedious, yet direct, calculation. Hence \smash{$\beta|_{Z_{\kappa,R}^\circ}$} is holomorphic. To describe~$\beta(Z_{\kappa,R}^\circ)$ and invert it there, we will make use of the elementary map
	\[ \beta_{\mathbb{P}} \colon \ \Dm^\circ\times \Dm^\circ\ni (\zeta,T) \mapsto \bigl(\zeta-T\bar{\zeta}, T\bigr) \in \Cm^2, \]
	easily seen to be injective and with image
	\[ \beta_{\mathbb{P}} (\Dm^\circ\times \Dm^\circ) = \left\{ (z_1,z_2) \in \Cm^2 \mid \left| \frac{z_1 + \overline{z_1}z_2}{1-|z_2|^2}\right| <1,\, |z_2|<1 \right\}. \]
	We now observe the following commutative diagram\footnote{The vertical map is the $2:1$ branched cover from transport- to projective twistor space of $(\Dm_R,g_\kappa)$, the map $z\mapsto \zeta$ is the diffeomorphism capturing the projective equivalence between $(\Dm_R,g_\kappa)$ and $(\Dm,g_0)$, inducing a biholomorphism between their projective twistor spaces given by the map $(z,\omega)\mapsto (\zeta,T)$. Finally, $\beta_{\mathbb{P}}$ is the analogue of beta maps to the projective twistor space of the Euclidean unit disk.}
	\[
	 	\begin{tikzcd}
			Z_{\kappa,R}^\circ \arrow["{\beta}"]{r} \arrow["{(z,\mu)\mapsto (z,\mu^2)}"]{d} & \Cm^2 \arrow[dr, bend left, "{\Psi\colon (w,\xi)\mapsto \bigl(\frac{1-\kappa R^2}{R}w, \xi^2 + \kappa w^2\bigr)}"] & {} \\
	 		\Dm_R^\circ \times \Dm^\circ \arrow[r, "\simeq", "{(z,\omega)\mapsto (\zeta,T)}"{yshift=-3ex}] & \Dm^\circ \times \Dm^\circ \arrow[r, "1:1", "{\beta_{\mathbb{P}}}"{yshift=-3ex}] & \Cm^2,
	 	\end{tikzcd}
	\]
	where \smash{$(\zeta,T) = \bigl(\frac{1-\kappa R^2}{1-\kappa |z|^2} \frac{z}{R}, \frac{\kappa z^2 + \omega}{1+\kappa\zbar^2 \omega}\bigr)$}. We then deduce that $\Psi(\beta(Z_{\kappa,R}^\circ)) = \beta_{\mathbb{P}}(\Dm^\circ\times \Dm^\circ)$, and since the range of $\beta$ is stable under $(w,\xi)\mapsto (w,-\xi)$ by the parity properties of $(w,\xi)$, we conclude that
	\begin{align*}
		\begin{split}
			\beta(Z_{\kappa,R}^\circ) &= \big\{(w,\xi)\in \Cm^2 \mid \Psi(w,\xi)\in \beta_{\mathbb{P}} (\Dm^\circ\times \Dm^\circ)\big\}	 \\
			&= \left\{ (w,\xi)\in \Cm^2 \mid \left|\frac{w+\bar{w}\bigl(\kappa w^2 + \xi^2\bigr)}{1-\big|\kappa w^2 + \xi^2\big|^2}\right| < \frac{R}{1-\kappa R^2}, \, \big|\kappa w^2 + \xi^2\big|<1 \right\}.
		\end{split}
	\end{align*}
	Now for $(w,\xi)\in \beta(Z_{\kappa,R}^\circ)$, $\Psi(w,\xi)$ uniquely determines $\bigl(z,\mu^2\bigr)\in \Dm_R^\circ\times \Dm^\circ$, and the relation $\xi = \mu (1+\kappa w\zbar)$ (which can be established directly from \eqref{eq:betakappa}), along with the fact that
	\begin{align*}
		1 + \kappa w \zbar \stackrel{\eqref{eq:betakappa}}{=} \frac{1+\kappa |z|^2}{1+\kappa \zbar^2 \mu^2} \ne 0, \qquad \text{since} \  |\kappa z|^2\le\kappa R^2<1,
	\end{align*}
	 uniquely determines $\mu$.

(iii) $\beta^* \Omega_{\Cm^2} \ge c \underline{\Omega}$ for some constant $\boldsymbol{c>0}$. Recall that $\underline{\Omega} ={\rm i} (\eta_1\wedge \bar{\eta}_1 + \eta_2 \wedge \bar{\eta}_2)$, where
	\begin{align*}
		\eta_1 = \bigl(1-|\mu|^4\bigr) \overline{\Xi}^\vee = \frac{{\rm d}z-\mu^2 {\rm d}\zbar}{1+\kappa z\zbar}, \qquad \eta_2 = \partial_\mu^\vee = {\rm d}\mu + \frac{\kappa\mu}{1+\kappa z\zbar} (z{\rm d}\zbar-\zbar {\rm d}z).
	\end{align*}
	Denoting $(w,\xi) = \beta(z,\mu)$ complex coordinates on the image of $\beta$, for which $\Omega_{\Cm^2} = {\rm i} \bigl({\rm d}w\wedge {\rm d}\bar{w} + {\rm d}\xi \wedge {\rm d}\bar{\xi}\bigr)$, we have
	\begin{align*}
		&\beta^* {\rm d}w
= \frac{1+\kappa z\zbar}{\bigl(1+\kappa\zbar^2 \mu^2\bigr)^2} \bigl( \bigl(1-\kappa \zbar^2\mu^2\bigr) \eta_1 - 2\mu\zbar \eta_2 \bigr), \\
		&\beta^* {\rm d}\xi
= \frac{1+\kappa z\zbar}{\bigl(1+\kappa\zbar^2 \mu^2\bigr)^2} \bigl( 2\kappa\mu\zbar \eta_1 + \bigl(1-\kappa\zbar^2\mu^2\bigr) \eta_2 \bigr).
	\end{align*}
	Writing the above succinctly as
$\beta^* {\rm d}w = a\eta_1 + b\eta_2$, $ \beta^* {\rm d}\xi = c\eta_1 + d\eta2$,
	we have $\beta^* \Omega_{\Cm^2} = {\rm i}\sum_{j,k=1}^2 H_{jk} \eta_j \wedge \bar{\eta}_k$, where
	\begin{align*}
		H_{11} = |a|^2 + |c|^2, \qquad H_{12} = \overline{H_{21}} = a \bar{b} + c \bar{d}, \qquad H_{22} = |b|^2 + |d|^2.
	\end{align*}
	Then minorizing $\beta^* \Omega_{\Cm^2}$ by $\underline{\Omega}$ is done by minorizing the smallest eigenvalue of the matrix $H = \{H_{jk}\}$, given by
	\begin{align*}
		\lambda_{\min} = \frac{\operatorname{tr} H}{2} - \sqrt{ \left(\frac{\operatorname{tr} H}{2}\right)^2 - \det H} = \frac{\det H}{\frac{\operatorname{tr} H}{2} + \sqrt{ \bigl(\frac{\operatorname{tr} H}{2}\bigr)^2 - \det H}} \ge \frac{\det H}{\operatorname{tr} H}.
	\end{align*}
	We now compute
	\begin{gather*}
		\det H = |ad-bc|^2 = \frac{(1+\kappa z\zbar)^4}{\big|1+\kappa\zbar^2\mu^2\big|^8} \big|\bigl(1-\kappa\zbar^2\mu^2\bigr)^2 + 4\kappa \mu^2 \zbar^2\big|^2 = \frac{(1+\kappa z\zbar)^4}{\big|1+\kappa\zbar^2 \mu^2\big|^4}, \\
		\operatorname{tr} H = |a|^2 + |b|^2 + |c|^2 + |d|^2 = 2 \frac{(1+\kappa z\zbar)^2}{\big|1+\kappa\zbar^2 \mu^2\big|^4} \big[ \big|1-\kappa\zbar^2\mu^2\big|^2 + 2|\zbar\mu|^2 \bigl(1+\kappa^2\bigr) \big],
	\end{gather*}
	then
	\begin{align*}
		\lambda_{\min} \ge \frac{\det H}{\operatorname{tr} H} = \frac{(1+\kappa z\zbar)^2}{2\big|1-\kappa\zbar^2\mu^2\big|^2 + 4 |\zbar\mu|^2 \bigl(1+ \kappa^2\bigr)} \ge \frac{\bigl(1-|\kappa| R^2\bigr)^2}{8+4 R^2+4|\kappa|},
	\end{align*}
	where we have used that $|z|\le R$, $|\mu|\le 1$ and $|\kappa|R^2<1$. The last right-hand side is a suitable constant $c$ and item (iii) is proved.	
\end{proof}

\subsection[Proof of Theorem 4.3]{Proof of Theorem~\ref{thm:preimages}} \label{ssec:proofpreimages}

Suppose $(M,g) = \bigl(\Dm, {\rm e}^{2\sigma} |{\rm d}z|^2\bigr)$, where the conformal factor $\sigma\in C^\infty(\Dm)$ is rotationally invariant in the sense that $\sigma({\rm e}^{{\rm i}t} z) = \sigma(z)$ for all $t\in \R$. This $\mathbb S^1$-action lifts to $Z\cong \Dm_z\times \Dm_\mu$ and we consider smooth first integrals on $SM\cong \Dm\times \mathbb S^1$ that satisfy an equivariance property of the form
\begin{align}
	u \bigl({\rm e}^{{\rm i}t}z, {\rm e}^{{\rm i}t}\mu\bigr) = {\rm e}^{{\rm i}pt} u (z,\mu), \qquad (z,\mu)\in \Dm\times \mathbb S^1,\qquad t\in \R,
	\label{eq:pequiv}
\end{align}
for some $p\in \Z$. This property corresponds to an equivariance of the Fourier modes. Indeed, if $u\bigl(z,{\rm e}^{{\rm i}\theta}\bigr) = \sum_{k\in \Zm} \tu_k(z) {\rm e}^{{\rm i}k\theta}$ satisfies \eqref{eq:pequiv}, then
\begin{align*}
	0 = u \bigl({\rm e}^{{\rm i}t}z, {\rm e}^{{\rm i}(t+\theta)}\bigr) - {\rm e}^{{\rm i}pt} u \bigl(z,{\rm e}^{{\rm i}\theta}\bigr) = \sum_{k\in \Zm} \bigl( \tu_k\bigl({\rm e}^{{\rm i}t}z\bigr) {\rm e}^{{\rm i}kt} - {\rm e}^{{\rm i}pt} \tu_k(z) \bigr) {\rm e}^{{\rm i}k\theta},
\end{align*}
which by uniqueness of Fourier series gives
\begin{align}
	\tu_k\bigl({\rm e}^{{\rm i}t}z\bigr) = {\rm e}^{{\rm i}(p-k)t} \tu_k(z), \qquad t\in \R,\quad z\in \Dm,\quad k\in \Zm.
	\label{eq:modesEquiv}
\end{align}
For equivariant first integrals, this implies the following vanishing result.
\begin{Lemma}\label{lem:vanishing} Suppose $(M,g) = \bigl(\Dm, {\rm e}^{2\sigma} |{\rm d}z|^2\bigr)$ is rotationally invariant, and suppose that $u\in C^\infty(SM)$ is a first integral satisfying \eqref{eq:pequiv} for some $p\in \Zm$. If $\tu_k= 0$ for all $k\le p$, then $u\equiv 0$.
\end{Lemma}

\begin{proof}By contradiction, if $u\ne 0$, let $\ell := \min\{k\in \Zm \mid u_k\ne 0\}>p$. Then by \eqref{eq:modesEquiv}, we have that $\tu_\ell \bigl({\rm e}^{{\rm i}t}z\bigr) = {\rm e}^{{\rm i}(p-\ell)t} \tu_\ell(z)$. {Using the splitting $X = \eta_+ + \eta_-$ (where $\eta_\pm$ are the Guillemin--Kazhdan operators, see, e.g., \cite[Definition~6.1.4]{PSU23}), taking the Fourier mode of degree $\ell-1$ of the equation $Xu =0$ gives $\eta_- (\tu_\ell (z) {\rm e}^{{\rm i}\ell \theta}) = 0$. In the current coordinates, \cite[Lemma~6.1.8]{PSU23} gives $\eta_- = {\rm e}^{-\sigma} {\rm e}^{-{\rm i}\theta} (\partial_{\bar{z}} - {\rm i}\partial_{\bar{z}}\sigma \partial_\theta)$, and thus setting $h = {\rm e}^{\ell\sigma} \tu_\ell$, the last equation is equivalent to~${\partial_{\zbar} h = 0}$}, hence $h$ has a convergent power series in $z$ in a neighbourhood of $z=0$. Since ${\rm e}^{\ell\sigma}$ is $\Sm^1$-invariant, $h$ also satisfies $h\bigl({\rm e}^{{\rm i}t}z\bigr) = {\rm e}^{{\rm i}(p-\ell)t} h(z)$. Upon differentiating with respect to $t$ and setting $t=0$, we obtain
	\begin{align*}
		{\rm i}z h'(z) ={\rm i} (p-\ell) h(z) \quad \implies \quad \partial_z \bigl(z^{\ell-p} h(z)\bigr) = 0.
	\end{align*}
	Hence $z^{\ell-p} h(z)$ is constant, and since $\ell>p$, this constant must be zero in order for $h$ to be well-defined at $z=0$. Hence $h\equiv 0$ and $\tu_\ell=0$, a contradiction.
\end{proof}

\begin{Corollary}\label{cor:noSzego} If $(M,g) = \bigl(\Dm,{\rm e}^{2\sigma}|{\rm d}z|^2\bigr)$ is rotationally symmetric and simple, then the $\beta$\nobreakdash-ex\-ten\-sion of the holomorphic embedding $f(z)=z$ satisfies
	\begin{align*}
		\bigl(I_0 N_0^{-1} z\bigr)^\sharp = \Sm_{\mathrm{ev}} \bigl(\bigl(I_0 N_0^{-1} z\bigr)^\sharp \bigr), \qquad \text{and}\qquad \bigl(I_1 N_1^{-1} {\rm d}z\bigr)^\sharp = \Sm_{\mathrm{odd}} \bigl(\bigl(I_1 N_1^{-1} {\rm d}z\bigr)^\sharp \bigr).
	\end{align*}
\end{Corollary}

This tells us that in rotationally symmetric situations, introducing the Szeg\H{o} projectors for the definition of the $\beta$-extension is not needed---it is unclear, whether this is generally true for simple surface.

\begin{proof}[Proof of Corollary \ref{cor:noSzego}]
	{The rotation invariance induces an $\Sm^1$-equivariance of the geodesic flow, and one thus notices that the constructed functions} \smash{$w = \bigl(I_0 N_0^{-1} z\bigr)^\sharp$} and \smash{$\xi = \bigl(I_1 N_1^{-1} {\rm d}z\bigr)^\sharp$} satisfy \eqref{eq:pequiv} with $p=1$. On to the study of $w$, $w$ is additionally fiberwise even, and since $\eta_- w_0 = 0$, then $\eta_+ w_{-2} = 0$ as well, and~$w$ splits into the sum of two smooth first integrals~${f = \Sm_{\mathrm{ev}} w}$ and~${h = w_{-2} + w_{-4} + \cdots}$. To show that $h\equiv 0$, notice that $\bar{h}$ is a smooth first integral satisfying~\eqref{eq:pequiv} with $p=-1$, and such that $\bigl(\bar{h}\bigr)_k = 0$ for all $k\le 1$ (in particular for all~${k\le -1}$). By Lemma \eqref{lem:vanishing}, this forces $h=0$. Similarly for $\xi$, $\xi$ is fiberwise odd, with~${\eta_- \xi_1 = 0}$, and hence~${\xi = \Sm_{\mathrm{odd}} \xi + h}$, where $\bar{h}$ is a smooth first integral satisfying \eqref{eq:pequiv} with~${p=-1}$, and such that $\bigl(\bar{h}\bigr)_k = 0$ for all $k\le 0$ (in particular for all $k\le -1$). By Lemma \eqref{lem:vanishing}, this again forces~${h=0}$.	
\end{proof}

Finally, we prove Theorem~\ref{thm:preimages}.

\begin{proof}[Proof of Theorem~\ref{thm:preimages}]
	To show that $\beta|_{SM} = \bigl(\bigl(I_0 N_{0}^{-1}z\bigr)^\sharp,\bigl(I_1 N_{1}^{-1}{\rm d}z\bigr)^\sharp\bigr)$ (removing the Szeg\H{o} projectors using Corollary \ref{cor:noSzego}), notice that both maps satisfy \eqref{eq:pequiv} with $p=1$ {as explained in the previous proof}, and they agree up to Fourier modes $k=1$. Thus their difference is a~smooth first integral satisfying \eqref{eq:pequiv} with $p=1$, and with vanishing modes up to $k=p=1$. By Lemma~\ref{lem:vanishing}, this difference is identically zero.
\end{proof}

\section[Metric dependence. Part 1: The Upsilon-map and consequences]{Metric dependence. Part 1: The $\boldsymbol{\Upsilon}$-map and consequences}\label{s_upsilon}

Let $(M,g)$ be a compact $d$-dimensional Riemannian manifold $(d\ge 2)$ that is non-trapping and has strictly convex boundary. In order to analyse the geodesic X-ray transform on functions with boundary blow-up as well as conjugacies between geodesic flows, it is convenient to consider the following map, introduced in \cite{MNP20}
\[
	\Upsilon\colon\ \partial_+SM\times [0,1]\rightarrow SM,\qquad \Upsilon(x,v,u)=\varphi_{u\tau(x,v)}(x,v),
\]
The $\Upsilon$-map sends the fibre $\{(x,v)\}\times [0,1]$ either to an orbit of the geodesic flow (if $(x,v)\notin \partial_0SM$) or collapes it to a point (if $(x,v) \in \partial_0SM$).
The purpose of this section is to show that the $\Upsilon$-map depends continuously on the underlying metric. From this, further continuity statements about the geodesic X-ray transform and conjugacies between geodesic flows are derived---these are used both in Sections~\ref{s_verstappen} and~\ref{s_bds}.

\subsection{Set-up for perturbation theory}
Let us define
$\mathbb M_\ntc \subset C^\infty\bigl(M,\otimes_S^2T^*M\bigr)
$ as the space of all Riemannian metrics $g$ such that $(M,g)$ is non-trapping and $\partial M$ is strictly convex. This is an open set of the ambient $C^\infty$-space and we equip it with the subspace topology. It will we useful to embed $M$ into a closed manifold~$N$ of the same dimension and extend all metrics to $N$. By Lemma \ref{lem_rext} (and Remark \ref{lem_rext}), this extension can be implemented by a~continuous map
\begin{equation}\label{extensionE}
 E\colon\ \mathbb M_\ntc \rightarrow C^\infty\bigl(N,\otimes^2_S T^*N\bigr), \qquad Eg\vert_{M}=g.
\end{equation}
To simplify notation, the extended metric is also written as $g$.
Fix a background metric $g_0\in \M_\ntc$ and denote with $SM\equiv SM_{g_0}$ (resp.~$SN\equiv SN_{g_0}$) the corresponding unit tangent bundle.
The rescaling map
\[
	\phi_g\colon\ SN\xrightarrow{\sim} SN_g,\qquad (x,v)\mapsto (x,v/|v|_g),\qquad g\in \M_\ntc,
\]
can be used to transfer all objects of interest to a fixed manifold.
For instance, if $(\varphi^g_t)$ denotes the $g$-geodesic flow on $SN_g$, we define a rescaled flow by
\begin{equation*}
\psi^g\in C^\infty(SN\times \R,SN),\qquad \psi_t^g=\phi_g^{-1}\circ \varphi_t^g\circ \phi_g.
\end{equation*}
The infinitesimal generator of $\psi^g_t$ will be denoted with $\mathbb X_g \in C^\infty(SN,T(SN))$.

\begin{Remark}\label{rmk_xpres}
If $d=2$, then one can show that $\mathbb X_g = a_g(X+\lambda_g V)$ for smooth functions $a_g,\lambda_g\in C^\infty(SM,\R)$, with $\lambda_g$ having non-zero Fourier modes only in the degrees $\pm 1$ and $\pm 3$ (see \cite[Section 3.1]{MePa20}). For us, this is only relevant for $g_0=|{\rm d}z|^2$ and $g={\rm e}^{2\sigma}|{\rm d}z|^2$ on $\DD$, where in the standard coordinate representation \cite[Lemma~3.5.6]{PSU23} one reads off that $a_g = {\rm e}^{-\sigma}$ and~${\lambda_g= \mu \partial_z \sigma - \bar \mu \partial_{\bar z}\sigma}$ on $S\DD = \DD\times \mathbb S^1$.
\end{Remark}

For $(x,v)\in SM_g$, the {\it exit times} of the $g$-geodesic flow are $\tau_\pm^g(x,v)=\pm \sup\{t\ge 0: \varphi^g_{\pm s}(x,v)\in SM_g \text{ for } 0\le s \le t\}$. Their sum defines a smooth function on $SM_g$ and we write its pull-back to $SM$ as
$\tilde \tau_g = \bigl(\tau_+^g+\tau_-^g\bigr)\circ \phi_g \in C^\infty(SM,\R)$.
Additionally define $m_g\in C^\infty(\partial SM,\R)$ by~${m_g(x,v)=g(\nu(x),v)}$, where $\nu$ is the inward pointing unit normal for $g$.

\subsubsection[Upsilon-map revisited]{$\boldsymbol{\Upsilon}$-map revisited}We can now re-define the $\Upsilon$-map as
\[
	\Upsilon_g\colon \ \partial_+SM \times [0,1]\rightarrow SM,\qquad \Upsilon_g(x,v,u)=\psi^g_{u\tilde\tau_g(x,v)}(x,v)
\]
and recall from \cite{MNP20} the following result.

\begin{Proposition}[properties of the $\Upsilon$-map]\label{propups} For all $g\in \M_\ntc$, we have
	\begin{enumerate}[label=\rm (\roman*)]\itemsep=0pt
	\item $\Upsilon_g$ is smooth and satisfies $(\Upsilon_g)_*\bigl(\tilde\tau_g^{-1}\partial_u\bigr) = \mathbb X_g$;
	
	\item $\Upsilon_g$ defines a diffeomorphism from $\partial_+SM\backslash \partial_0SM$ onto $SM\backslash \partial_0SM$ and descends to a homeomorphism
	\[
		\tilde \Upsilon_g\colon\ \partial_+SM\times [0,1]/{\sim} \rightarrow SM,
	\]
	where the equivalence relation $\sim$ identifies two points $(x,v,u)$ and $(x',v',u')$ if and only if~${(x,v)=(x',v')\in \partial_0SM}$;
	
	\item if $\rho\in C^\infty(M)$ is a boundary defining function $($with lift to $SM$ denoted by the same symbol$)$, then there is a smooth, positive function $F_g\colon \partial_+SM\times [0,1]\rightarrow (0,\infty)$ such that
	\[
	\rho \circ\Upsilon_g =F_g
	\cdot u(1-u)\tilde \tau_g^2\qquad \text{on} \  \partial_+SM\times [0,1].
	\]
	\end{enumerate}
\end{Proposition}

\begin{proof}
This immediately follows from the corresponding assertions for $\Upsilon_g'=\phi_g\circ \Upsilon_g\circ \bigl(\phi_g^{-1}\times \operatorname{Id}\bigr)$ and $F_g'=F_g\circ \bigl(\phi_g^{-1}\times \operatorname{Id}\bigr)$, which were proved in \cite{MNP20}.
\end{proof}

\subsubsection{X-ray transform revisited}
The X-ray transform of the $\bigl(\psi^g_t\bigr)$-flow is given by
\[
	I_g\colon\ \rho^{-1/2}C^\infty(SM)\rightarrow C^\infty(\partial_+SM),\qquad I_g f(x,v)=\int_0^{\tilde \tau_g(x,v)} f\circ \psi_t(x,v) {\rm d}t.
\]
This is well-defined in view of the following representation, which is a direct consequence of Proposition~\ref{propups}
\begin{equation}\label{igrep}
		I_g \bigl(\rho^{-1/2}h\bigr) = \int_0^1
		\frac{\Upsilon_g^*h}{F_g^{1/2}}\cdot \frac{{\rm d}u}{\sqrt{u(1-u)}},\qquad h\in C^\infty(SM).
\end{equation}
Further, the extension to $(\psi_t)$-invariant functions is defined by
\[
	\sharp_g\colon\ C^\infty(\partial_+SM)\rightarrow C(SM),\qquad h^{\sharp_g}(x,v) = h\circ \tilde \Upsilon_g^{-1}(x,v),
\]
where $h$ is viewed as $u$-independent function $C^\infty(\partial_+SM\times [0,1])$.

\subsection{Collection of continuous dependences}
In the just described setting, all objects depend continuously on the metric.
\begin{Proposition}\label{thm_continuitygalore}
	All of the following maps are well-defined and continuous:
	\begin{alignat}{3}
 & g\mapsto \psi^g, \qquad&& \M_\ntc\rightarrow C^\infty(SN\times \R,SN);& \tag{i}\\
& g\mapsto \tilde \tau_g, \qquad&& \M_\ntc\rightarrow C^\infty(SM,\R);& \tag{ii}\\
& g\mapsto \tilde \tau_g/m_g, \qquad&& \M_\ntc\rightarrow C^\infty(\partial SM,\R);&\tag{iii}\\
& g\mapsto \Upsilon_g, \qquad&& \M_\ntc\rightarrow C^\infty(\partial_+SM\times [0,1],SM);&\tag{iv}\\
& g \mapsto F_g, \qquad&& \M_\ntc\rightarrow C^\infty(\partial_+SM\times [0,1],\R_{>0});& \tag{v}\\
&g \mapsto I_g, \qquad&& \M_\ntc\rightarrow \mathcal L\bigl(\rho^{-1/2}C^\infty(SM),C^\infty({\partial_+SM)}\bigr);&\tag{vi}\\
& g\mapsto \sharp_g\circ I_g, \qquad&&  \M_\ntc\rightarrow \mathcal L\bigl(\rho^{-1/2}C^\infty(SM),C^\infty(SM)\bigr). & \tag{vii}
\end{alignat}		
Here $\mathcal L(\cdot,\cdot)$ is the space of continuous linear operators, equipped with the topology of pointwise convergence.
\end{Proposition}

Checking continuity in each case is a tedious exercise that offers only little insight, so we confine ourselves to a brief sketch.

\begin{proof}[Sketch of the proof] The local coordinate expression of $\mathbb X_g\in C^\infty(N,TN)$ is a rational function in the components of $g$ and their derivatives and as a consequence the map $g\mapsto \mathbb X_g$ is continuous from $\M_\ntc$ into $C^\infty(N,TN)$. Combining this with Theorem~\ref{thm_isotopies} thus yields item~(i).
The arguments in \cite[Section~3.2]{PSU23} carry over to show that $\tilde \tau_g$ and $\tilde \tau_g/m_g$ lie in the claimed $C^\infty$-spaces, and closer inspection reveals that as such they depend continuously on the flow~$(\psi^g)$. From this also (ii), (iii) and (iv) follow.

Statement (v) follows from the expression of $F_g$ in terms of the $g$-Hessian of~$\rho$, the geodesic flow and $\tilde \tau_g$ that is discussed in the proof of \cite[Proposition~6.12]{MNP20}. By means of \eqref{igrep}, this can then be leveraged to (vi).

Finally, to see continuity in (vii) we need to consider also the scattering relation $\alpha_g\in \Diff_+(\partial SM)$, defined by $\alpha_g(x,v)=\Upsilon_g(x,v,1)$ for $(x,v)\in \partial_+SM$ and otherwise through $\alpha_g\circ \alpha_g = \Id$. For $h\in C^\infty(\partial_+SM)$, we define $A_+^gh\in C^0(\partial SM)$ by $A^g_+h = h$ on $\partial_+SM$ and $A_+^g h = h\circ \alpha_g$ on $\partial_-SM$. Let
\[
	\mathcal X = \{(g,h)\in \M_\ntc \times C^\infty(\partial_+SM)\mid A_+^g h \in C^\infty(\partial SM)\}
\]
and note that this is a closed subspace of $\M_\ntc\times C^\infty(\partial_+SM)$ (smoothness of $A_+^g h$ is checked by compatibility conditions on $\partial_0SM$ and hence is a closed property). Given \smash{$f\in \rho^{-1/2}C^\infty(SM)$}, the map in (vii) can then be factored into
$g\mapsto (g,I_g f)$, $\M_\ntc \rightarrow \mathcal X$ and $ (g,h)\mapsto h^{\sharp_g}$, $\mathcal X\rightarrow C^\infty(SM)$.
By \cite[Proposition 6.13]{MNP20}, the first map indeed lands in $\mathcal X$ and as this space is closed, continuity follows immediately from (vi). The fact that $h^{\sharp_g}$ is smooth is a classical result (see, e.g., \cite[Theorem 5.1.1]{PSU23}) and continuity of the map $(g,h)\mapsto h^{\sharp_g}$ is readily verified by an inspection of its proof.
\end{proof}

\subsection{Conjugacies} The geodesic flows of any two metrics in $\M_\ntc$ are smoothly conjugate up to a time change. We first show this for metrics in the closed subset
\begin{equation}\label{def_mntck}
	\M_\ntc(K) = \{g\in \M_\ntc\mid \supp(g-g_0)\subset K\},\qquad K\subset M^\circ \text{ compact}.
\end{equation}
Here the canonical choice of conjugacy (flow back along the orbits of $g_0$ and then forward along the orbits of $g$) is smooth and can be written in terms of $\Upsilon$-maps.

\begin{Proposition}
	Let $K\subset M^\circ$ be compact and $g\in \M_\ntc(K)$. Then there exists a unique solution $(\Phi_g,a_g)\in \Diff_+(SM)\times C^\infty(SM,\R)$ to
	\begin{equation}\label{conjugacy1}
		\Phi_*\mathbb X_{g_0} = a\mathbb X_{g}, \qquad \mathbb X_g a = 0\quad \text{on} \quad SM\qquad \text{and} \qquad \Phi = \Id\quad \text{on}\quad \partial_+SM.
	\end{equation}
	For $g=g_0$, we have $(\Phi,a)=(\Id,1)$ and moreover, the following map is continuous:
	\[g\mapsto (\Phi_g,a_g),\qquad \M_\ntc(K) \rightarrow \Diff_+(SM)\times C^\infty(SM,\R).\]
\end{Proposition}

\begin{proof}
Suppose $(\Phi,a)$ is a solution to \eqref{conjugacy1} and $(x,v)\in \partial_+SM$. Then $\Phi$ necessarily maps the $g_0$-orbit emerging from $(x,v)$
to the corresponding $g$-orbit, with a constant time change $a(x,v)=\tilde \tau_g(x,v)/\tau_{g_0}(x,v)$. Hence there is the representation
$
\Phi= \tilde \Upsilon_g\circ \tilde \Upsilon_{g_0}^{-1}$, $ a= [\tilde \tau_g/\tilde \tau_{g_0}]^{\sharp_{g}}$,
which at once implies uniqueness and that $\Phi$ is a homeomorphism. To show that this is a~valid solution, we need to verify smoothness. Since $g=g_0$ on $M\backslash K$, there is a neighbourhood $U\subset \partial_+SM$ of~$\partial_0SM$ such that $\Upsilon_{g}=\Upsilon_{g_0}$ on $U\times [0,1]$. Hence
\[
	\Phi = \Id \text{ on } \Upsilon_{g_0}(U\times [0,1])\qquad \text{and} \qquad \Phi = \Upsilon_g\circ \Upsilon_{g_0}^{-1}\qquad \text{on}\  SM\backslash \partial_0SM
\]
and $\Phi$ and $\Phi^{-1}$ must be smooth. Consider the spaces
\[
	C_{\alpha,\pm}^\infty(\partial_+SM)=\big\{h\in C^\infty(\partial_+SM)\mid A_{\pm}^g h \in C^\infty(\partial SM)\big\},
\]
where $u=A_{\pm}^g h\in L^\infty(\partial SM)$ is the unique function with $u\circ \alpha_g =\pm u$ and $\alpha_g(x,v)=\Upsilon_{g}(x,v,1)$ is the scattering relation of~$g$. Since $g=g_0$ on $M\backslash K$, we have $\alpha_{g}=\alpha_{g_0}$ in a neighbourhood of~$\partial_0SM$. This makes the spaces $C_{\alpha,\pm}^\infty(\partial_+SM)$ independent of the choice $g\in \M_\ntc(K)$ used in their definition. Moreover, $m(x,v)={g(\nu(x),v))}$ is a smooth function on $\partial SM$ that is independent of~${g\in \M_\ntc(K)}$. By \cite[Section 3.2]{PSU23},
\[
	\tilde \tau_{g}, \tilde\tau_{g_0} \in C^\infty_{\alpha,-}(\partial_+SM),\qquad \tilde \tau_g/m,\tilde \tau_{g_0}/m\in C^\infty(\partial_+SM)
\]
and as a consequence $\tilde \tau_g/\tilde \tau_{g_0} \in C^\infty_{\alpha,+}(\partial_+SM).$
Finally, since $\sharp_g$ maps $C^\infty_{\alpha,+}(\partial_+SM)$ into $C^\infty(SM)$, also $a$ must be smooth.

To verify that $\Phi=\Phi_g\in \Diff_+(SM)$ depends continuously on the metric, it suffices to verify continuity of the restrictions
$g\mapsto \Phi_g\vert_{V}$, $ \M_\ntc(K)\rightarrow C^\infty(V,SM)$, $ V\subset SM$ open.
For~${V=\Upsilon_{g_0}(U\times [0,1])}$ this restriction is equals the identity and so there is nothing to prove. For $V=SM\backslash \partial_0 SM$ continuity follows from Proposition~\ref{thm_continuitygalore}\,(iv). Next, by Proposition~\ref{thm_continuitygalore}\,(iii), the map
$g\mapsto a_g\vert_{\partial_+SM}$, $ \M_\ntc(K)\rightarrow C_{\alpha,+}^\infty(\partial_+SM)
$
is continuous and hence also $\tilde a_g = (a_g\vert_{\partial_+SM})^{\sharp_{g_0}}$, as a function in $C^\infty(SM)$, depends continuously on $g\in \M_\ntc(K)$. Since $a_{g}=\tilde a_g\circ \Phi_g^{-1}$, the proof is complete.
\end{proof}

For metrics without compact support condition, smoothness might be lost at the glancing region. This can be dealt with by moving the boundary condition to a slightly larger manifold.

\begin{Proposition}\label{prop_conjugacy2}
	There is a neighbourhood $\mathcal U\subset \M_\ntc$ of $g_0$ and a continuous map
	\[
		g\mapsto (\Phi_g,a_g),\qquad \mathcal U\rightarrow \Diff_+(SM)\times C^\infty(SM,\R)
	\]
	such that $(\Phi_g,a_g)$ solves $\Phi_*\mathbb X_{g_0}=a \mathbb X_g$ and $\mathbb X_{g}a =0$. Moreover, $(\Phi_{g_0},a_{g_0})=(\Id,1)$.
\end{Proposition}

\begin{proof}
	Recall that $M$ is embedded into a closed manifold $N$ of the same dimension. Let $M'\subset N$ be a slightly larger manifold with boundary, containing $M$ in its interior and such that $(M',g_0)$ remains non-trapping with strictly convex boundary. By modifying the extension map $E$ from~\eqref{extensionE} if necessary, we may assume that $\supp(Eg - g_0)\subset K$ for a compact set~${M\subset K \subset ( M')^\circ}$ and all $g\in \M_\ntc$. Let $\M_\ntc'(K)\subset C^\infty\bigl(M',\otimes^2_ST^*M'\bigr)$ be defined analogously to~\eqref{def_mntck}, then
	$
		\mathcal U = \{g\in \M_\ntc\mid Eg\vert_{M'} \in \M_\ntc'(K)\}
	$
	is an open neighbourhood of $g_0$. We obtain~$(\Phi_g,a_g)$ by applying the preceding proposition on $M'$ and then restricting to $M$.
\end{proof}

\subsection[Parametrising the C\_alpha\^{}infty-structure]{Parametrising the $\boldsymbol{C_\alpha^\infty}$-structure}\label{s_refG}

The rescaling map restricts to a homeomorphism of the influx boundaries:
\[
		\phi_g|_{\partial_+SM}\colon\ \partial_+{SM} \to \partial_+SM^g,\qquad g\in \M_\ntc.
\]
This is automatically a diffeomorphism with respect to the standard $C^\infty$-structures, but in general it does not respect the $C_\alpha^\infty$-structures from Section~\ref{s_calpha}. Instead, transferring the maximal atlas $\mathcal A_\alpha$ to $\partial_+SM$ yields a smooth structure
$\mathcal A_{\alpha}^g = (\phi_g|_{\partial_+SM})^*\mathcal A_\alpha$,
that is sensitive to the boundary behaviour of $g$. Equivalently, this smooth structure can be obtained by first rescaling the fold map from \eqref{def_foldmap},
\[
	f_g\colon\ \partial_+SM \to \partial S\hat M,\qquad f_g = \phi_g^{-1}\circ F_g\circ \phi_g
\]
and noting that $\mathcal A_\alpha^g$ is uniquely characterised by $f_g\colon (\partial_+SM,\mathcal A_\alpha^g)\to f_g(\partial_+SM)\subset\partial S\hat M$ being a~smooth embedding. Here we focus our attention on metrics $g\in \mathcal U\subset \M_\ntc$ in some open set, for which we can choose a common extension $\hat M\subset N$ of $M$ such that $\bigl(\hat M,g\bigr)$ is non-trapping and with strictly convex boundary. Let us assume that $g_0\in \mathcal U$ and write
$\mathcal G = (\partial_+SM,\mathcal A_\alpha^{g_0})$.
This serves as a model to parametrise all smooth structures $\mathcal A_\alpha^g$ ($g\in \mathcal U$) in a $g$-continuous manner.

\begin{Proposition}\label{prop_refG}
	There exists a family $(\Phi_g\mid g\in \mathcal U)$ of diffeomorphisms $\Phi_g\colon \mathcal G\to (\partial_+SM,\mathcal A_\alpha^g)$ such that the following map is continuous:
	\[
		\mathcal X =\{(g,h)\in \M_\ntc\times C^\infty(\partial SM)\mid \alpha_g^*h=h\} \to C^\infty(\mathcal G),\qquad (g,h)\mapsto \Phi_g^* h
	\]
\end{Proposition}

\begin{proof}
	The map $f_g\colon \partial SM\to \partial S\hat M$ lies in the space $\mathcal W^{\partial_0SM}(\partial SM,\partial S\hat M)$ of global Whitney folds as considered in Appendix~\ref{s_whitney} (cf.~\cite[Section~5.2]{PSU23}). Using Proposition~\ref{thm_continuitygalore}, one checks that the following map is continuous $\mathcal U \to \mathcal W^{\partial_0SM}\bigl(\partial SM, \partial S\hat M\bigr)$, $ g\mapsto f_g$.
Hence, by Theorem~\ref{thm_seswhitney}, there is a continuous map as follows
\[
T\colon\ \mathcal X \to 	C^\infty\bigl(\partial S\hat M\bigr),\qquad T(g,h) = w_g,\qquad f_g^*w_g =h.
\]
\sloppy Note that $\partial \mathcal G = \partial_0SM$ carries its standard smooth structure, because the restriction $f_g|_{\partial_0SM} \colon \partial_0SM\to \partial S\hat M$ is a smooth embedding for all $g\in \mathcal U$. Let now $\tilde f_g$ be an $\mathcal A_\alpha^{g_0}$-smooth embedding that extends this boundary datum, that is,
\smash{$\tilde f_g \in \operatorname{Emb}\bigl(\mathcal G,\partial S\hat M\bigr)$}, $ \tilde f_ g= f_g$ on $ \partial \mathcal G$.
By Theorem~\ref{prop_palais}, this extension can be implemented by a continuous map
\smash{$\mathcal U \to \operatorname{Emb}\bigl(\mathcal G,\partial S\hat M\bigr)$}, \smash{$ g\mapsto \tilde f_g$}.
We may further assume that $\tilde f_g(\mathcal G) = f_g(\partial_+SM)$, such that the map
$
	\Phi_g:= f^{-1}_g\circ \tilde f_g\colon \mathcal G\to \partial SM
$ is well-defined and smooth. If $(g,h)\in \mathcal X$, then
\[
	\Phi_g^*h = \tilde f_g^* \bigl(f_g^{-1}\bigr)^* f_g^* w_g = \tilde f_g^* w_g \in C^\infty(\mathcal G)
\]
and this depends continuously on $(g,h)$ by the above.
\end{proof}

\section{Metric dependence. Part 2: Inverses of the normal operator} \label{s_verstappen}

Let $(M,g)$ be a simple manifold of dimension $d\ge 2$, let $\rho\in C^\infty(M)$ be a boundary defining function and consider the normal operator of the geodesic X-ray transform
\begin{equation*}
N^g_0 \colon\ \rho^{-1/2}C^\infty(M)\rightarrow C^\infty(M),
\end{equation*}
as it has been defined in Section~\ref{s_geoxray}. The purpose of this section is to show that the inverses of $N_0^g$, whose existence was established in \cite{MNP19a}, depend continuously on the metric. We denote with
$
\Ms=\big\{g\in C^\infty\bigl(M,\otimes^2_ST^*M\bigr)\mid (M,g)\text{ is simple}\big\}
$
the set of simple metrics on $M$. This is a~$C^2$-open subset of the ambient space of smooth, symmetric $2$-tensors and will be equipped with the $C^\infty$-topology.

\begin{Theorem}\label{thm_ghtop}
For every $f\in C^\infty(M)$, the following map is continuous:
\begin{equation*}
 g\mapsto \bigl(N^g_0\bigr)^{-1}f,\qquad \M_s\rightarrow \rho^{-1/2} C^\infty(M).
\end{equation*}
\end{Theorem}

\begin{Remark}\label{rem_attenuatedverstappen}
The theorem generalises to the {\it attenuated} setting considered in \cite[Section~12.2]{PSU23}. If $a\in \Omega^1(M)$ is a $1$-form on $M$, then the normal operator \smash{$N_{a,0}^g=\bigl(I_{a,0}^g\bigr)^*I_{a,0}^g$} associated with the $a$-attenuated X-ray transform is an isomorphism $\rho^{-1/2}C^\infty(M)\to C^\infty(M)$. The arguments of this section then also establish continuity of the map
\[	
	(g,a) \mapsto \bigl(N^g_{a,0}\bigr)^{-1}f,\qquad \mathbb M_s\times \Omega^1(M)\to \rho^{-1} C^\infty(M),\qquad f\in C^\infty(M).
\]
\end{Remark}

As in~\cite{MNP19a}, we use that $N^g_0$ is an elliptic classical pseudodifferential of order $-1$ that satisfies a Grubb--H{\"o}rmander transmission condition. As a classical pseudodifferential operator, $N_0^g$~depends continuously on the metric (see Appendix \ref{s_appb}) and as it has an even symbol, it automatically fits into the Grubb--H{\"o}rmander framework. Using these facts, we will derive Theorem~\ref{thm_ghtop} by successively leveraging the continuous dependence to stronger topologies with help of the open mapping theorem. This requires revisiting some arguments of \cite{MNP19a}, but no `hard analysis' is needed.

\subsection{Extension of the normal operator} Let $N$ be a closed $d$-dimensional manifold engulfing $M$. Denote with $\Psi^{m}_\ev(N)$ the Fr{\'e}chet space of classical pseudodifferential operators of order $m$, having an {\it even} symbol---see Appendix \ref{s_appb}, where this is recalled. Further, write
\[
	e_M\colon\ L^2(M)\rightarrow L^2(N),\qquad r_M\colon\ L^2(N)\rightarrow L^2(M),
\]
for the extension by zero and the restriction to $M$.
\begin{Lemma}\label{lem_next} Let $g_0\in \M_\mathrm{s}$. Then there exists a neighbourhood $\mathcal U\subset \M_{\mathrm s}$ of $g_0$ as follows. For every $g\in \mathcal U$, there exist an operator $P^g\in \Psi^{-1}_\ev(N)$ such that
$
r_M P^g e_M = N_0^g$ on $ L^2(M)$.
Further, the following map is continuous:
$
g\mapsto P^g$, $ \mathcal U\rightarrow \Psi^{-1}_\ev(N)$.
\end{Lemma}

\begin{proof}
	First, by Lemma \ref{lem_rext}, there is a continuous extension map
	\[
		E\colon \ \mathcal U\rightarrow C^\infty\bigl(N,\otimes^2_S T^*N\bigr),\qquad Eg|_{M} =g
	\]	
	such that $Eg$ is a Riemannian metric on $N$. Let $M_1,\dots, M_D\subset N$ be $d$-dimensional submanifolds with boundary, such that $M\subset M_1^\circ$, $N=M_1^\circ\cup\dots\cup M_D^\circ$, $M\cap M_2=\dots= M\cap M_D=\varnothing$ and
	\begin{equation}\label{simplecover}
		(M_k,Eg_0|_{M_k}) \text{ is simple for all } 1\le k \le D.
	\end{equation}
	As simpleness is an open condition, and after shrinking $\mathcal U$ if necessary, we may assume that~\eqref{simplecover} also holds true with $g_0$ replaced by $g\in \mathcal U$. Consider now a partition of unity ${\sum_{k=1}^D\varphi_k^2 = 1}$, where~$\varphi_k\in C^\infty(N,[0,1])$ with $\supp \varphi_k \subset M_k^\circ$ for all $k=1,\dots, D$ and $\varphi_1\equiv 1$ in a~neighbourhood of~$M$. Let \smash{$
	P_k^g:=N_0^{Eg\vert_{M_k}}\in \Psi_{\cl}^{-1}(M_k^\circ)
	$}
	be the normal operator of the simple manifold~$(M_k,E g\vert_{M_k})$. Then by Proposition~\ref{claim2}, the map
$g\mapsto\! P^g_k$ is continuous from $\mathcal U$ into~$\Psi_\cl^{-1}(M_k^\circ)$.
Define
\[
P^g = \sum_{k=1}^D \varphi_k P^g_k \varphi_k \in \Psi^{-1}_\ev(N).
\]
Then the continuity of $g\mapsto P^g$ follows from Proposition~\ref{claim2} and the continuity of all natural operations in $\Psi_\cl^{m}$ (such as multiplication by $C^\infty$-functions or extension of operators with compactly supported Schwartz kernels from $M_k^\circ$ to $N$). That $P^g$ has the claimed extension property is proved in \cite[Section~4.1]{MNP19a}. There it is also shown that $P^g$ has an even symbol and as $\Psi^{-1}_\ev(N)\subset \Psi^{-1}_\cl(N)$ is a closed subspace, we may replace the latter by the former in the continuity statement.
\end{proof}

\subsection{The Grubb--H{\"o}rmander framework}\label{s_gruho}

Consider the space
\begin{equation*}
\mathcal E_\mu(M)=\{e_M\rho^\mu u \mid u\in C^\infty(M)\} \subset \mathcal D'(N),\qquad \mu\in \C,\qquad \Re\mu >-1,
\end{equation*}
which is isomorphic to $C^\infty(M)$ and becomes a Fr{\'e}chet space in this way. An operator $A\in \Psi_\cl^m(N)$ is said to satisfy the {\it $\mu$-transmission condition}, if
$
r_M A u\in C^\infty(M)$ for all $ u\in \mathcal E_\mu(M)$.
This property has been studied extensively by Grubb and H{\"o}rmander (see \cite{Gru15}), and is equivalent to a certain symmetry of the local symbol of $A$ at $\partial M$---we refer to the discussion in \cite[p.~27]{MNP19a} for more details. As observed in \cite[Lemma 4.2]{MNP19a}, the operators $P^g$ from the preceding section do satisfy the $\mu$-transmission condition for $\mu=-1/2$. This leads to a number of mapping properties, which ultimately allow to establish the isomorphism property.

We retrace the argument from \cite{MNP19a} with a view on continuous $g$-dependence. To this end it is useful to introduce a scale of Hilbert spaces $H^{\mu(s)}(M)$ ($s\in \R)$, referred to as {\it H{\"o}rmander spaces}. For their definition, which is intricate, we refer to \cite[p.~29]{MNP19a} and the references therein. For us, it suffices to know the following properties:
\begin{enumerate}[label=(\roman*)]\itemsep=0pt
\item $H^{\mu(0)}(M)=L^2(M)$ and $H^{\mu(s+t)}(M)\subset H^{\mu(s)}(M)$ for all $s\in \R$, $t\ge 0$;

\item $\bigcap_{s\in \R} H^{\mu(s)}(M) = \mathcal E_\mu(M)$ and $\mathcal E_\mu(M)$ carries the initial topology with respect to the inclusions $\mathcal E_\mu(M)\hookrightarrow H^{\mu(s)}(M)$;

\item $\mathcal E_\mu(M)\subset H^{\mu(s)}(M)$ is a dense subspace for all $s\in \R$; see \cite[Proposition 4.1]{Gru15};

\item if $A\in \Psi^m_\cl(N)$ satisfies the $\mu$-transmission condition, then it has the continuous mapping property $
r_M A\colon H^{\mu(s)}(M)\rightarrow H^{s-m}(M)$, $ s>\Re \mu -1/2$;
see \cite[Theorem 4.2]{Gru15}.
\end{enumerate}

\begin{Proposition}\label{ctsctsaction} Any $A\in \Psi_{\ev}^m(N)$ $(m\ge -2)$ automatically satisfies the $\mu$-transmission condition for $\mu=m/2$ and hence $r_MA\in \mathcal L(H^{\mu(s)}(M),H^{s-m}(M))$ for every $s>\Re \mu -1/2$. Moreover, the following map is continuous:
\[
A\mapsto r_MA,\qquad \Psi_{\ev}^m(N) \rightarrow \mathcal L\big(H^{\mu(s)}(M),H^{s-m}(M)\big).
\]
\end{Proposition}

The novelty of this proposition is the continuity claim, which is proved by means of the following application of the closed graph theorem.

\begin{Lemma}[continuity upgrade]\label{upgrade}
A linear map $T\colon E\to F$ between Fr{\'e}chet spaces is continuous if and only if it is continuous with respect to some weaker Hausdorff topology on $F$.
\end{Lemma}

\begin{proof}
	Write $\tau_E$, $\tau_F$ for the Fr{\'e}chet space topologies on $E$ and $F$, respectively, and assume that~${\tau\subset \tau_F}$ is Hausdorff. Then continuity of $T$ as map $(E,\tau_E)\to (F,\tau)$ implies that $\mathrm{Graph}(T)$ is $\tau_E\times \tau$-closed. Hence it is $\tau_E\times \tau_F$-closed and the closed graph theorem implies continuity as map $(E,\tau_E)\to (F,\tau_F)$.
\end{proof}

\begin{proof}[Proof of Proposition~\ref{ctsctsaction}]
We wish to apply Lemma \ref{upgrade} with $E=\Psi_\mathrm{ev}^m(N)$ and
\[
F=\mathcal L(H^{\mu(s)}(M),H^{s-m}(M)),
\]
 where on $F$ we consider the initial topology $\tau$ with respect to the following collection of maps (for $L=\max(s,0)\ge 0$)
\[
	\mathcal L(H^{\mu(s)}(M),H^{s-m}(M)) \to H^{s-m-L}(M),\qquad T\mapsto Tf,\qquad \text{where}\quad f\in \mathcal E_\mu(M).\]
This topology is clearly weaker than the operator norm topology on $F$. To see that $\tau$ is Hausdorff, consider two distinct operators $T_1,T_2\in F$. Since $\mathcal E_\mu(M)\subset H^{\mu(s)}(M)$ is dense, we must have $T_1 f \neq T_2 f$ for some $f\in \mathcal E_\mu(M)$. Hence there exists $\epsilon>0$ such that the balls
\[
B_k=\bigl\{g\in H^{s-m-L}(M)\mid ||T_k f - g||_{H^{s-m-L}(M)}<\epsilon\bigr\},\qquad k=1,2,
\]
 are disjoint and the sets $\{T\in F\mid Tf\in B_k\} \in \tau$ separate $T_1$ and $T_2$.

To prove the continuity assertion of the proposition, we may therefore fix $f\in \mathcal E_\mu(M)$ and prove continuity of the map $ A\mapsto r_M Af$, $ \Psi^m_\mathrm{ev}(N) \to H^{s-m-L}(M)$. Take $B\in \Psi_\mathrm{cl}^{s-m-L}(N)$ such that $B\colon H^{s-m-L}(N)\to L^2(N)$ is an isomorphism, then
\[
	||r_MA f||_{H^{s-m-L}(M)}\le ||A f||_{H^{s-m-L}(N)} \lesssim ||BAf||_{L^2(N)}=:p(A)
\]
and we are done after showing that $p(\cdot)$ is a continuous seminorm on $\Psi^m_{\mathrm{cl}}$. However, this follows from the standard continuity properties in the calculus $\Psi_{\mathrm{cl}}^*$; concretely from the continuity of~${(B,A)\mapsto BA}$ as a map $\Psi^{s-m-L}_\mathrm{cl}(N)\times \Psi^{m}_\mathrm{cl}(N) \to \Psi_{\mathrm cl}^{s-L}(N) \subset \Psi^0_{\mathrm{cl}}(N) \hookrightarrow \mathcal L\bigl(L^2(N)\bigr)$.
\end{proof}

\subsection{Proof of Theorem~\ref{thm_ghtop}}
 Recall the following elementary result.

\begin{Lemma}\label{inversion} Let $E$ and $F$ be normed vector spaces. Then the subset $\mathcal L^\times(E,F) \subset \mathcal L(E,F)$ of invertible operators is open and inversion is continuous as map
$
T\mapsto T^{-1}$, $ \mathcal L^\times(E,F)\rightarrow \mathcal L(F,E)$.
\end{Lemma}

\begin{Remark}
In general, the lemma fails for non-normed topological vector spaces. Consider, e.g., $E=F=C^\infty([0,1])$ and $T_n = \operatorname{Id} - \frac 1n \frac {\rm d}{{\rm d}x}\in \mathcal L(E,E)$. This sequence converges to $\operatorname{Id}$ in~$\mathcal L(E,E)$ (equipped with the topology of pointwise convergence), despite $T_n$ not being invertible for any $n\in \mathbb N$. This necessitates the usage of the Hilbert spaces $H^{\mu(s)}(M)$ instead of~$\mathcal E_\mu(M)$.
\end{Remark}

\begin{proof}[Proof of Lemma \ref{inversion}] If $\mathcal L^\times (E,F)$ is non-empty, we may assume without loss of generality that $E=F$. Let $T_0\in \mathcal L^\times (E,E)$, then for all $T\in \mathcal L(E,E)$ for which $H= (T_0-T)T_0^{-1}$ has operator norm $<1$, the inverse $T^{-1}$ exists and satisfies
\begin{gather*}
T^{-1} =T_0^{-1}(\operatorname{Id} - H )^{-1} = T_0^{-1} \sum_{k\ge 0} H^k,\\
\big\Vert T^{-1} -T_0^{-1} \big\Vert \le \bigl\Vert T_0^{-1} \bigr\Vert \cdot \sum_{k\ge 1}  \Vert H \Vert^k \le \big\Vert T_0^{-1} \big\Vert \bigl((1-\Vert H \Vert)^{-1} - 1\bigr).
\end{gather*}
The last expression vanishes in the limit $\Vert H \Vert\rightarrow 0$, which proves that the map $T\mapsto T^{-1}$ is continuous at $T_0$.
\end{proof}

\begin{proof}[Proof of Theorem~\ref{thm_ghtop}] We fix a metric $g_0 \in \M_\mathrm{s}$ and restrict our attention to the open neighbourhood $\mathcal U$ and the continuous family $(P^g\mid g\in \mathcal U)$ of operators from Lemma \ref{lem_next}. By Proposition~\ref{ctsctsaction}, the following map is continuous:
\begin{equation*}
g\mapsto r_MP^g,\qquad \mathcal U\rightarrow \mathcal L\bigl(H^{-1/2(s)}(M),H^{s+1}(M)\bigr),\qquad s>-1.
\end{equation*}
By \cite[Theorem~4.4]{MNP19a}, the operator $r_MP^g \colon H^{-1/2(s)}(M)\rightarrow H^{s+1}(M)$ ($g\in \mathcal U$) is an isomorphism. Hence the nonlinear map from the preceding display takes values in the set of invertible operators, which we denote
$
\mathcal L^\times\bigl(H^{-1/2(s)}(M),H^{s+1}(M)\bigr)$.
By Lemma \ref{inversion}, also the following map is continuous:
\begin{equation*}
g\mapsto \bigl(r_M P^g\bigr)^{-1},\qquad \mathcal U \rightarrow \mathcal L\bigl(H^{s+1}(M),H^{-1/2(s)}\bigr),\qquad s>-1.
\end{equation*}
To complete the proof of Theorem~\ref{thm_ghtop}, let $f\in C^\infty(M)$. Then by the preceding display, $ g\mapsto (r_M P^g)^{-1} f$ is continuous as a map from $\mathcal U$ into $H^{-1/2(s)}(M)$ for every $s>-1$. By property (ii) above, the map is also continuous from $\mathcal U$ into $\mathcal E_{-1/2}(M)$. Further, $r_M\colon \mathcal E_{-1/2}(M)\rightarrow \rho^{-1/2}C^\infty(M)$ is continuous and hence
\begin{equation*}
g\mapsto r_M(r_MP^g)^{-1} f,\qquad \mathcal U\rightarrow \rho^{-1/2}C^\infty(M)
\end{equation*}
is continuous. For $g\in \mathcal U$, let $h=r_M(r_MP^g)^{-1} f$, then due to the extension property in Lemma~\ref{lem_next}, we have
\begin{equation*}
N_0^g h = r_M P^g e_M h = (r_MP^g)(r_MP^g)^{-1} f = f
\end{equation*}
and thus the theorem is proved.
\end{proof}

\section{The holomorphic blow-down structure under perturbations}\label{s_bds}

In this section, we prove Theorem~\ref{thm_openbds}, starting with some considerations regarding metric geometry of the Hermitian manifold $(Z^\circ,\Om)$.

\subsection{Lipschitz maps on Hermitian manifolds}\label{s_metriccx2}
\sloppy Let $(X,\Omega_X)$ and $(Y,\Omega_Y)$ be complex manifolds of dimension $n$ and $m$, respectively, both equipped with Hermitian metrics.
We assume for simplicity that the bundles $\Lambda^{1,0}X$ and~$\Lambda^{1,0}Y$ are smoothly trivial, such that there exist global orthonormal frames
$\big\{\eta_X^1,\dots,\eta_X^n\big\}$ and $\big\{\eta_Y^1,\dots,\eta_Y^m\big\}$.
In these frames, the differential of a smooth map $f\colon X\rightarrow Y$ can be expressed in terms of two complex $n\times m$ matrices
\begin{equation*}
S,T\in C^\infty\bigl(X,\C^{n\times m}\bigr),\qquad S=(s_{jk}),\quad T=(t_{jk}),
\end{equation*}
defined through the following equation
\begin{equation}\label{defst}
f^* \eta_Y^k = \sum_{j=1}^n s_{jk}\eta_X^j + t_{jk} \bar \eta_X^j,\qquad k=1,\dots, m.
\end{equation}

\begin{Remark}
Equivalently, $S(x)$ and $T(x)$ are defined through the following commutative diagram:
\begin{equation}
\label{commdiagram}
\begin{tikzcd}[ampersand replacement=\&]
\bigl(\Lambda^{1,0}Y\oplus \Lambda^{0,1}Y\bigr)_{f(x)}
\arrow{d} \arrow["{\rm d}f_x^T"]{r} \& \bigl(\Lambda^{1,0}X\oplus \Lambda^{0,1}X\bigr)_{x} \arrow{d}\\
\C^m\oplus \C^m \arrow{r}{\qquad~\begin{bmatrix}
S &\bar T\\
T & \bar S
\end{bmatrix}(x)} \&\C^n\oplus \C^n.
\end{tikzcd}
\end{equation}
Here ${\rm d}f_x^T$ is extended $\C$-linearly and the vertical arrows map a vector in $(T_\C X)_x$ to its expression in the basis $\eta_X^1,\dots, \eta_X^n,\bar \eta_X^1,\dots,\bar \eta_X^n$ (similarly for $Y$).
\end{Remark}

We now consider $X$ and $Y$ as a metric spaces, where the metric is given by the geodesic distance functions $d_{\Omega_X}$ and $d_{\Omega_Y}$, respectively. Further, $|\cdot|\colon \C^{n\times n}\rightarrow [0,\infty)$ is the operator norm on Euclidean $\C^n$.

\begin{Proposition}\label{distancebound} Suppose $f\colon (X,\Omega_X)\rightarrow (Y,\Omega_Y)$ is a smooth map whose differential is given with respect to orthonormal frames as $S,T\in C^\infty\bigl(X,\C^{n\times m}\bigr)$.
\begin{enumerate}[label={\rm (\roman*)}]\itemsep=0pt
\item \label{lipschitz1} If $\vert S\vert,\vert T \vert \in L^\infty(X)$, then $f$ is globally Lipschitz from $(X,d_{\Omega_X})$ into $(Y,d_{\Omega_Y})$ and its Lipschitz constant satisfies
\begin{equation*}
\Lip(f) \le \Vert S \Vert_{L^\infty(X)}+ \Vert T \Vert_{L^\infty(X)}.
\end{equation*}
\item\label{lipschitz2} If additionally $f$ is a diffeomorphism and for some $\epsilon>0$, we have
\[
	\Vert S-\Id \Vert_\infty, \Vert T \Vert_\infty <\epsilon/3,
\]
 then $f$ is globally bi-Lipschitz and
 \begin{equation}\label{lipschitz3}
 \Lip (f) \le 1+ \epsilon \qquad \text{and} \qquad \Lip\bigl(f^{-1}\bigr) \le (1-\epsilon)^{-1},
 \end{equation}
 or equivalently,
 \begin{equation}
 \label{lipschitz4}
\sup_{\substack{x,x'\in X\\x\neq x'}}\left \vert 1 - \frac{d_{\Omega_Y}(f(x),f(x'))}{d_{\Omega_X}(x,x')} \right\vert \le \epsilon.
\end{equation}
\end{enumerate}
\end{Proposition}

\begin{proof} For \ref{lipschitz1}, it suffices to uniformly bound the operator norm of ${\rm d}f_x\colon T_xX \rightarrow T_yY$, which satisfies (writing $T=T(x)$ and $S=S(x)$)
\[
\vert {\rm d}f_x \vert^2 = \big\vert {\rm d}f_x^T \big\vert^2 \le \left | \begin{bmatrix}
S &\bar T\\
T & \bar S
\end{bmatrix} \right|^2 =\sup \big\{|Sv + \bar T w|^2 +| Tv + \bar S w |^2\big\},
\]
where the supremum is taken over all $v,w\in \C^m$ with $|v|^2+|w|^2\le 1$. The desired norm estimate then follows from Young's inequality
\begin{align*}
|Sv + \bar T w|^2 +| Tv + \bar S w |^2 & \le |S|^2 \bigl(|v|^2 + |w|^2\bigr) + 4 |S||T||v||w|+ |T|^2 \bigl(|v|^2 + |w|^2\bigr) \\
& \le (|S| + |T|)^2\bigl(|v|^2+ |w|^2\bigr),
\end{align*}
For \ref{lipschitz2}, the first bound in \eqref{lipschitz3} follows immediately from \ref{lipschitz1}. The differential of $f^{-1}$ at $f(x)$ is given as
\begin{equation*}
\begin{bmatrix}
	S & \bar T\\
	T & \bar S
\end{bmatrix}^{-1} = \begin{bmatrix}
	F & F\bar G\\
	\bar F G & \bar F
\end{bmatrix},\qquad \text{where}\quad F= \bigl(S-\bar T \bar S^{-1}T\bigr)^{-1},\quad G= - TS^{-1}.
\end{equation*}
Writing $P=\Id - S$ and $Q = P + \sum_{j\ge 0}\bar T \bar P^j T$, we have
\begin{gather*}
|Q| \le \frac \epsilon 3 + \frac {\epsilon^2} 9 \cdot \frac{1}{1-\epsilon/3} = \frac \epsilon {3-\epsilon},\qquad \vert F\vert\le \sum_{k\ge 0}|Q|^k\le \frac{1}{1-\frac{\epsilon}{3-\epsilon}} = \frac{3-\epsilon}{3-2\epsilon}\\
|G| \le |T| \sum_{j\ge 0} |P|^j \le \frac \epsilon 3 \cdot \frac{1}{1-\epsilon/3} = \frac{\epsilon}{3-\epsilon},\qquad 1 + |G| \le \frac{3}{3-\epsilon}.
\end{gather*}
Combining these estimates, we see that
$
|F| (1+|G|) \le \frac{3}{3-2\epsilon} \le \frac{1}{1-\epsilon}
$
and the estimate on $\Lip\bigl(f^{-1}\bigr)$ follows again from \ref{lipschitz1}. Finally, note that
\begin{align*}
\eqref{lipschitz3}\quad &\Leftrightarrow \quad f^*d_{\Omega_Y} \le (1+\epsilon) d_{\Omega_X} \qquad\text{and}\qquad d_{\Omega_X}\le (1-\epsilon)^{-1} f^*d_{\Omega_Y}\\
&\Leftrightarrow \quad -\epsilon d_{\Omega_X} \le f^*d_{\Omega_Y} - d_{\Omega_X} \le \epsilon d_{\Omega_X}\quad \Leftrightarrow \quad \eqref{lipschitz4},
\end{align*}
which completes the proof.
\end{proof}

\subsection[Jacobian and difference quotient of beta-maps]{Jacobian and difference quotient of $\boldsymbol{\beta}$-maps}

Recalling the set-up from Section~\ref{s_perturbationtheory}, let $Z=\DD\times \DD$ and consider for $\sigma\in \Sigma_\mathrm{s}$ the following vector fields:
\begin{align*}
	\Xi_\sigma&={\rm e}^{-\sigma} \bigl(\mu^2\partial_z+\partial_{\bar z} + \bigl(\mu^2 \partial_z\sigma - \partial_{\bar z}\sigma\bigr)(\bar \mu \partial_{\bar \mu} - \mu \partial_\mu) \bigr) \in C^\infty(Z,T_\C Z).
\end{align*}
Further, recall that
$
\mathcal I=\big\{(\beta,\sigma)\in C^\infty\bigl(Z,\C^2\bigr)\times \Sigma_\mathrm{s}\mid \Xi_\sigma\beta = \partial_{\bar \mu} \beta=0\big\}
$. If $(\beta,\sigma)\in \mathcal I$, then the Jacobian of $\beta$ with respect to the frame $\big\{\bigl(1-|\mu|^4\bigr)\bar \Xi_\sigma^\vee,\partial_\mu^\vee\big\}\subset \Lambda^{1,0}Z^\circ$ is given by the following $2\times 2$-matrix
\[
		S_\beta = \begin{bmatrix}
			\bigl(1-|\mu|^4\bigr)^{-1}\bar \Xi_\sigma \beta_1 & \bigl(1-|\mu|^4\bigr)^{-1}\bar \Xi_\sigma \beta_2\\
			\partial_\mu \beta_1 & \partial_\mu \beta_2
		\end{bmatrix}.
\]
As this frame is orthonormal for the Hermitian metric $\Om_\sigma$ introduced in Section~\ref{s_conformallyeuclidean}, this is precisely one of the Jacobian matrices considered above (the anti-holomorphic part $T_\beta$ is zero, because $\beta$ is holomorphic).

\begin{Lemma}\label{sbetacts} Let $(\beta,\sigma)\in \mathcal I$, then $|S_\beta|\in L^\infty(Z^\circ)$ and consequently $\beta^*\Omega_{\C^2} \le C \Om_\sigma$ for some~${C>0}$. Further, the following map is continuous
$(\beta,\sigma)\mapsto S_\beta$, $ \mathcal I\rightarrow L^\infty\bigl(Z^\circ,\C^{2\times 2}\bigr)$.
\end{Lemma}

\begin{proof} For any $f\in C^\infty(Z)$, we have
\begin{equation*}
\bigl(\bar \Xi_\sigma - \bar \mu^2 \Xi_\sigma\bigr)f = {\rm e}^{-\sigma} \bigl(1-|\mu|^4\bigr) \left[ \partial_z f + \partial_z \sigma\cdot (\bar \mu \partial_{\bar \mu} - \mu \partial_\mu)f\right].
\end{equation*}
If $f$ is holomorphic ($\Xi_\sigma f = \partial_{\bar \mu} f =0$), this implies that
\begin{equation*}
\bigl(1-|\mu|^4\bigr)^{-1} \bar \Xi_\sigma f = {\rm e}^{-\sigma} [ \partial_z f - (\mu \partial_z \sigma) \partial_\mu f ] \in C^\infty(Z).
\end{equation*}
Applying this to the components of $\beta$, we see that $S_\beta \in C^\infty\bigl(Z,\C^{2\times 2}\bigr)$ and in particular, $|S_\beta|$ is bounded on $Z^\circ$, with the continuous dependence being obvious. Moreover, the matrix representation of $\beta^*\Omega_{\C^2}$ in the frame $\big\{\bigl(1-|\mu|^4\bigr)\bar \Xi_\sigma^\vee,\partial_\mu^\vee\big\}$ is $S_\beta S_\beta^*$. Hence
\begin{equation}\label{omegainequality1}
\beta^*\Omega_{\C^2} \le C \Om_\sigma\quad \Leftrightarrow \quad S_\beta S_\beta^* \le C \operatorname{Id}.
\end{equation}
This is satisfied upon setting $C=\sup_{Z^\circ}|S_\beta|^2$.
\end{proof}

In view of Proposition~\ref{distancebound} and the preceding lemma, the map $\beta$ is automatically Lipschitz with respect to the geodesic distance $d_\sigma$ on $(Z^\circ,\Om_\sigma)$. That is, the following difference quotient yields a well-defined element of $L^\infty\bigl(Z^\circ\times Z^\circ,\C^2\bigr)$:
\[
		B(z,\mu,z',\mu') = \frac{\beta(z,\mu)- \beta(z',\mu')}{d_\sigma((z,\mu),(z',\mu'))},\qquad (z,\mu,z',\mu')\in Z^\circ \times Z^\circ.
\]
We can now reformulate the holomorphic blow-down structure in terms of the $L^\infty$-functions $S_\beta$ and $B$. (Note that both quantities depend on the tuple $(\beta,\sigma)\in \mathcal I$, though this is suppressed in the notation.)

\begin{Proposition}[characterisation of $\mathcal I_+$]\label{chari}
	Let $(\beta,\sigma)\in \mathcal I$. Then $(\beta,\sigma)\in \mathcal I_+$ {\rm(}that is, $\beta$ has a holomorphic blow-down structure with respect to $\D_\sigma$ and $\Om_\sigma)$, if and only if the following conditions are satisfied:
	\begin{enumerate}[label=\rm(\roman*)]\itemsep=0pt
		\item\label{chari1} $\Phi_{g_\sigma}^*\beta \colon\mathcal G \rightarrow \C^2$ is a totally real embedding $($with $\mathcal G$ and $\Phi_{g_\sigma}$ as in Section~{\rm\ref{s_refG}}$)$;
		
		\item \label{chari2} $|{\det S_\beta}|\ge c_1$ on $Z^\circ$ for some constant $c_1>0$;
		
		\item \label{chari3} $|B|\ge c_2$ almost everywhere on $Z^\circ$ for some constant $c_2>0$.
	\end{enumerate}
\end{Proposition}

\begin{proof} In view of Section~\ref{s_refG}, property \ref{chari1} and part \ref{bds3} of Definition~\ref{def_bds} are identical, so it remains to show that the other items are equivalent.
Let $0\le \lambda_\beta\le \Lambda_\beta$ be the eigenvalues of~$S_\beta S_\beta^*$. Then $\Lambda_\beta = |S_\beta|^2$, $|{\det S_\beta}|^2 = \lambda_\beta \Lambda_\beta$ and similarly to \eqref{omegainequality1} one shows that
 \[
\beta^*\Omega_{\C^2} \ge c \Om_\sigma\quad \Leftrightarrow \quad S_\beta S_\beta^* \ge c \operatorname{Id} \quad \Leftrightarrow \quad \lambda_\beta \ge c.
\]
If additionally $\beta\colon Z^\circ\rightarrow \beta(Z^\circ)$ is known to be a biholomorphism, then
 \begin{gather*}
\beta^*\Omega_{\C^2} \ge c \Om_\sigma\quad \Leftrightarrow \quad \Omega_{\C^2} \ge c\bigl(\beta^{-1}\bigr)^*\Om_\sigma \quad\Rightarrow \quad \beta^{-1} \text{ is Lipschitz} \quad \Leftrightarrow \quad |B|\ge c,
\end{gather*}
where $\beta^{-1}$ being Lipschitz is understood with respect to the Euclidean metric on $\beta(Z^\circ)$ and the geodesic distance $d_\sigma$ on $Z^\circ$. The implication in the middle follows then as in the preceding lemma, using Proposition~\ref{distancebound}.

Let now $(\beta,\sigma)\in \mathcal I_+$, then the preceding consideration immediately imply \ref{chari2} and \ref{chari3}. Vice versa, if $(\beta,\sigma)\in \mathcal I$ satisfies \ref{chari2}, then \[
\lambda_\beta = |{\det S_\beta|^2/}|S_\beta|^2
 \ge c_1^2/\sup_{Z^\circ}|S_\beta|^2=:c \quad \Rightarrow \quad \beta^*\Omega_{\C^2} \ge c \Om_\sigma
\]
and as a further consequence $\beta\colon Z^\circ\rightarrow \C^2$ is a local diffeomorphism.
If also \ref{chari3} is satisfied, then~${\beta\colon Z^\circ \rightarrow \C^2}$ must be injective and we conclude that it is a biholomorphism onto its image.\looseness=-1
\end{proof}

\begin{Remark}\label{rlocint}
The preceding proof also justifies \eqref{locint}. Indeed, ${\rm d}\beta_1 \wedge {\rm d}\beta_2 = (\det S_\beta)\cdot \eta_1\wedge \eta_2$, where $\eta_1 = \bigl(1-|\mu|^4\bigr) \bar\Xi_\sigma^\vee = e^\sigma \bigl({\rm d}\bar z - \bar \mu^2 {\rm d}z\bigr)$ and $\eta_2 = ({\rm d}\bar \mu + \bar \mu(\partial_{\bar z}\sigma {\rm d}\bar z - \partial_z \sigma {\rm d}z))$, such that $\eta_1\wedge \eta_2$ is everywhere non-zero.
\end{Remark}

\subsubsection{Openness of the first two conditions} The first two conditions in Proposition~\ref{chari} are easily seen to be open in the $C^\infty$-topology. Precisely, the map
\[
(\beta,\sigma) \mapsto (\Phi_{g_\sigma}^*\beta,  \det S_\beta ),\qquad \mathcal I\mapsto C^\infty\bigl(\mathcal G,\C^2\bigr)\times L^\infty(Z^\circ)	
\]
is continuous and the two sets encoding conditions \ref{chari1} and \ref{chari2}, respectively,
\begin{gather*}
\operatorname{Emb}_\mathrm{TR}\bigl(\partial_+S\DD,\C^2\bigr)\subset C^\infty\bigl(\partial_+S\DD,\C^2\bigr)\!\qquad \text{and} \!\qquad\{f\in L^\infty(Z^\circ)\mid \mathrm{ess inf } |f| > 0 \} \subset L^\infty(Z^\circ)
\end{gather*}
are both open. Here $\operatorname{Emb}_\mathrm{TR}\bigl(\partial_+ S\DD,\C^2\bigr)$ is the set of totally real embeddings and this is open in the $C^\infty$-topology, because $\partial_+S\DD$ is compact \cite[Chapter~2, Theorem~1.4]{Hir76}.\footnote{See also the discussion in Section~\ref{s_hermitian}. Totally realness is clearly a $C^1$-open condition.}
Condition~\ref{chari3} is trickier, because the map
\[
	(\beta,\sigma)\mapsto B,\qquad \mathcal I\rightarrow L^\infty\bigl(Z^\circ\times Z^\circ,\C^2\bigr)
\]
is {\it not} continuous. Indeed, if $\sigma_1\neq \sigma_2$, then the corresponding difference quotients cannot directly be compared because $1-d_{\sigma_1}/d_{\sigma_2}\notin L^\infty(Z^\circ\times Z^\circ)$. This is due to the fact that $\Id\colon (Z^\circ,d_{\sigma_1})\rightarrow (Z^\circ,d_{\sigma_2})$ is not bi-Lipschitz. To overcome this problem, we seek out adequate bi-Lipschitz maps with small distortion.

\subsection[Proof of Theorem 3.10]{Proof of Theorem~\ref{thm_openbds}} \label{pf_openbds} We fix a tuple $(\beta_0,\sigma_0) \in \mathcal I_+$ and claim that this is an interior point. Recall from Remark \ref{rmk_xpres} that the geodesic flow of ${\rm e}^{2\sigma}|{\rm d} z|^2$, projected to $\DD\times \mathbb S^1\subset \partial Z$, is generated by the following vector field
$
\mathbb X_\sigma = \mu \Xi_\sigma|_{S\DD}\in C^\infty(S\DD,T(S\DD))$,  $\sigma \in \Sigma_\mathrm{s}$.
Let us restate Proposition~\ref{prop_conjugacy2} in the present context.

\begin{Proposition}\label{flowback} There is a neighbourhood $\mathcal U$ of $\sigma_0$ in $\Sigma_s$ and a continuous map
\[
	\sigma \mapsto (\psi_\sigma,a_\sigma),\qquad \mathcal U\rightarrow \Diff_+(S\DD)\times C^\infty(S\DD,\R)
\]
such that $(\psi_\sigma)_*(\mathbb X_{\sigma_0})=a_\sigma \mathbb X_\sigma$ and $(\psi_{\sigma_0},a_{\sigma_0})=(\Id,1)$.
\end{Proposition}
Any diffeomorphism $\Psi$ of $Z$ induces a smooth map $(Z^\circ,\D_{\sigma_0})\rightarrow (Z^\circ,\D_\sigma)$ between complex surfaces. As explained in Section~\ref{s_metriccx2}, we may express the differential of this map in terms of the orthonormal frames $\big\{\bigl(1-|\mu|^4\bigr) \Xi_{\sigma_0}^\vee,\partial_\mu^\vee\big\}$ and $\big\{\bigl(1-|\mu|^4\bigr) \Xi_\sigma^\vee,\partial_\mu^\vee\big\}$. This yields two matrix-valued maps
\begin{equation}\label{defstpsi}
S_\Psi,T_\Psi\in C^\infty\bigl(Z^\circ,\C^{2\times 2}\bigr),
\end{equation}
defined as in \eqref{defst}. If $\Psi=\Psi_\sigma$ extends the diffeomorphism $\psi_\sigma$ from the preceding proposition (whose existence follows from Proposition~\ref{prop_palais}), we can estimate these matrices as follows.

\begin{Proposition}\label{STestimate} Suppose $(\Psi_\sigma\mid \sigma\in \mathcal U)\subset \mathrm{Diff}_+ (Z)$ is a continuous family of diffeomorphisms with $\Psi_0\equiv \Id$ and $\Psi_\sigma=\psi_\sigma$ on $S\DD$. Then for every $\epsilon >0$, there exists a neighbourhood $\Sigma_0$ of~$\sigma_0\in \Sigma_\mathrm{s}$ such that the matrices \eqref{defstpsi} satisfy
\begin{equation*}
|| S_{\Psi_\sigma} - \Id ||_{L^\infty(Z^\circ)}, || T_{\Psi_\sigma} ||_{L^\infty(Z^\circ)} <\epsilon/3,\qquad \text{for}\quad \sigma \in \Sigma_0.
\end{equation*}
\end{Proposition}

\begin{proof}
We extend the functions $a_\sigma$ from Proposition~\ref{flowback} to all of $Z$, such that $\sigma\mapsto a_\sigma$ is continuous from $\Sigma_\mathrm{s}$ to $C^\infty(Z)$ and such that $a_0\equiv 1$. By this proposition, the vector fields
 \[
	\Theta'_\sigma:={\Psi_\sigma}_*\Xi_0 - a_\sigma \Xi_\sigma \in C^\infty(Z,T_\C Z),\qquad \sigma\in \Sigma_\mathrm{s}
\]	
satisfy $\Theta'_\sigma = 0$ on the boundary hypersurface $S\DD$. As $\bigl(1-|\mu|^4\bigr)$ is a boundary defining function for this hypersurface, there is a split exact sequence of Fr{\'e}chet spaces:
\[
0\rightarrow C^\infty(Z,T_\C Z)\xrightarrow{(1-|\mu|^4)\times }C^\infty(Z,T_\C Z)\xrightarrow{\Theta' \mapsto \Theta' |_{S\DD}} C^\infty(S\DD, T_\C Z|_{S\DD}) \rightarrow 0.
\]
As a consequence, $\Theta_\sigma =\Theta'_\sigma/ \bigl(1-|\mu|^4\bigr)$ is defined smoothly up to the boundary of $Z$ and moreover, the following map is continuous:
\[
	\sigma\mapsto \Theta_\sigma,\qquad \Sigma_\mathrm{s}\rightarrow C^\infty(Z,T_\C Z).
\]
Further, we define $\rm H_\sigma = \Psi_{\sigma,*}\partial_\mu - \partial_\mu$. Then also $\sigma\mapsto {\rm H}_\sigma$ is continuous as map $\Sigma_\mathrm{s}(K)\rightarrow C^\infty(Z,T_\C Z)$. Clearly, $\Theta_{\sigma_0} \equiv {\rm H}_{\sigma_0} \equiv 0$.

The matrix $S=S_{\Psi_\sigma}$ has entries $s_{jk}\in C^\infty(Z)$ $(1\le j,k\le 2)$, given by
\begin{gather*}
s_{11} = \big\langle \Psi_{\sigma*} \Xi_{\sigma_0}, \Xi_\sigma^\vee\big\rangle = \big\langle \Theta_\sigma,  \bigl(1-|\mu |^4\bigr)\Xi_\sigma^\vee\big\rangle + a_\sigma,\\
s_{12} = \big\langle \Psi_{\sigma*} \big[\bigl(1-|\mu|^4\bigr)^{-1} \Xi_{\sigma_0}\big], \partial_\mu^\vee\big\rangle = \big\langle \Theta_\sigma, \partial_\mu^\vee\big\rangle, \\
s_{21} = \big\langle \Psi_{\sigma*} \partial_\mu, \bigl(1-|\mu|^4\bigr)\Xi_\sigma^\vee\big\rangle = \big\langle {\rm H}_\sigma, \bigl(1-|\mu|^4\bigr) \Xi_\sigma^\vee\big\rangle,\\
s_{22} = \big\langle \Psi_{\sigma*} \partial_\mu, \partial_
\mu^\vee\big\rangle = \big\langle {\rm H}_\sigma, \partial_\mu^\vee\big\rangle + 1.
\end{gather*}
Both $\bigl(1-|\mu|^4\bigr)\Xi_\sigma^\vee$ and $\partial_\mu^\vee$ extend to smooth $1$-forms on $Z$ and, as elements of $C^\infty(Z,T^*_\C Z)$, depend continuously on $\sigma$---this is apparent in the explicit expression in~\eqref{def_omegasigma}.
 Hence their $L^\infty$-norm can be bounded uniformly in $\sigma$, in a neighbourhood of $\sigma =\sigma_0$. Further, all of $|| \Theta_\sigma ||_{\infty}, ||{\rm H}_\sigma ||_\infty $ and $||a_\sigma -1 ||_\infty$ are arbitrarily small in a neighbourhood of
$\sigma=\sigma_0$. This yields the desired bound on $||S-\Id||_\infty$. For $||T||_\infty$, one proceeds analogously.
\end{proof}

\subsubsection*{Proof of Theorem~\ref{thm_openbds}}
We have to show that the given tuple $(\beta_0,\sigma_0)\in \mathcal I_+$ is an interior point. Denote with $B_0\in L^\infty\bigl(Z^\circ\times Z^\circ,\C^2\bigr)$ the associated difference quotient.
 Let us introduce the notation \[
 \Delta \beta(z,\mu,z',\mu')=\beta(z,\mu)-\beta(z',\mu')\qquad \text{and} \qquad \hat \Psi_\sigma(z,\mu,z',\mu')=(\Psi_\sigma(z,\mu),\Psi_\sigma(z',\mu')),\]
 where $(\Psi_\sigma\mid \sigma\in \mathcal U)$ is a family of diffeomorphisms as in the preceding proposition. The difference quotient $B=\Delta \beta/d_{\sigma}$ associated to another tuple $(\beta,\sigma)$ can then be compared to $B_0$ as follows:
\begin{align*}
\big|\big|\hat \Psi_\sigma^*B-B_0\big|\big|_\infty &= \left\Vert
\frac{\hat \Psi_\sigma^*\Delta \beta}{\hat \Psi_\sigma^*d_\sigma} \times \left[1- \frac{\hat \Psi_\sigma^*d_\sigma}{d_{\sigma_0}}\right] + \frac{\hat \Psi_\sigma^*\Delta \beta - \Delta \beta_0}{d_{\sigma_0}}
\right\Vert_\infty \\
&\le
\underbrace{\left
\Vert
B \right\Vert_\infty }_{\rm (a)}\times \underbrace{\left\Vert 1- \frac{\hat \Psi_\sigma^*d_\sigma}{d_{\sigma_0}} \right\Vert_\infty}_{\rm (b)} + \underbrace{ \left\Vert\frac{\hat \Psi_\sigma^*\Delta \beta - \Delta \beta_0}{d_{\sigma_0}} \right\Vert_\infty }_{\rm (c)}.
\end{align*}
For $(\beta,\sigma)$ in a small neighbourhood of $(\beta_0,\sigma_0)$ inside $\mathcal I$, we will estimate the terms~(a), (b) and~(c) to the effect that $\big|\big|\hat \Psi_\sigma^*B-B_0\big|\big|_\infty \le ||B_0||_\infty/2$. As a consequence,
\begin{equation*}
| B | \ge |B_0 | - \big|\big|\hat\Psi_\sigma^*B-B_0\big|\big|_\infty \ge |B_0|/2>0\qquad \text{a.e.},
\end{equation*}
such that $(\beta,\sigma)\in \mathcal I_+$, as desired. Let us now consider the three terms separately:
\begin{enumerate}[label=\rm(\alph*)]\itemsep=0pt
	\item By Proposition~\ref{distancebound}, we have $||B||_\infty \le || S_\beta ||_\infty$ and by Lemma \ref{sbetacts} this upper bound depends continuously on $(\beta,\sigma)$. In particular, it stays uniformly bounded for small perturbations inside $\mathcal I$.
	
	\item Let $\epsilon>0$ and $\Sigma_0$ as in Proposition~\ref{STestimate}. Then for $\sigma\in \mathcal U$, this factor is bounded by $\epsilon$ due to Proposition~\ref{distancebound}.

	\item This factor equals the Lipschitz constant of $f=\Psi_\sigma^*\beta- \beta_0$, considered as map $f\colon (Z^\circ, d_{\sigma_0})\rightarrow \C^2$ and will be estimated as in Proposition~\ref{distancebound}\,\ref{lipschitz1}. To this end, we write $S_\bullet$ and $T_\bullet$ (with $\bullet \in \{f,\Psi_\sigma,\beta,\beta_0, \})$ for the Jacobian matrices with respect to the orthonormal frames
$\big\{\bigl(1-|\mu|^4\bigr)\bar{\Xi}^\vee_\sigma,\partial_\mu^\vee\big\}$, $ \big\{\bigl(1-|\mu|^4\bigr)\bar{\Xi}^\vee_{\sigma_0},\partial_\mu^\vee\big\}$ and $\{{\rm d}w,{\rm d}\xi\}$,
 respectively. Then, by~\eqref{commdiagram},
\[
	\begin{bmatrix}
		S_f & \bar T_f\\
		T_f & \bar S_f
	\end{bmatrix}
	=
	\begin{bmatrix}
		S_{\Psi_\sigma} & \bar T_{\Psi_\sigma}\\
		T_{\Psi_\sigma} & \bar S_{\Psi_\sigma}
	\end{bmatrix}
	\begin{bmatrix}
		S_\beta & 0\\
		0 & \bar S_\beta
	\end{bmatrix}
	-
	\begin{bmatrix}
		S_{\beta_0} & 0\\
		0 & \bar S_{\beta_0}
	\end{bmatrix},
\]
and hence
\begin{gather*}
	|| S_f ||_\infty = || S_{\Psi_\sigma}S_\beta - S_{\beta_0}||_\infty \le || (S_{\Psi_\sigma}-\Id) ||_\infty\cdot ||S_\beta||_\infty + ||(S_\beta - S_{\beta_0})||_\infty,\\
	||T_f||_\infty =|| T_{\Psi_\sigma}S_\beta||_\infty \le ||T_{\Psi_\sigma} ||_\infty \cdot || S_\beta||_\infty.
\end{gather*}
For $(\beta,\sigma)$ in a small neighbourhood of $(\beta_0,\sigma_0)\in \mathcal I$, both norms can be made arbitrarily small. This follows from the estimates in Proposition~\ref{STestimate} and Lemma \ref{sbetacts}. Using Proposition~\ref{distancebound}\,\ref{lipschitz1}, we thus arrange that $\Lip (f) \le \epsilon $.
\end{enumerate}

\section[Proofs of Theorems 3.8, 1.3 and 1.4]{Proofs of Theorems \ref{thm_betafamily}, \ref{thm_globalbeta} and \ref{TNNT}}\label{s_tnnt}

In this section, we conclude the proofs of the main theorems and their corollaries.

\subsection{Proof of Theorem~\ref{thm_betafamily}} \label{s_proof_betafamily}
As discussed below the theorem, it only remains to prove the continuous dependence on $\sigma$. Recall that
\[
	\beta_\sigma|_{S\DD} = \bigl(\underbrace{\mathbb S_\mathrm{ev}\bigl( I_0^{g_\sigma}(N_0^{g_\sigma})^{-1}z\bigr)^{\sharp_{g_\sigma}}}_{\beta^{(0)}_\sigma|_{S\DD}}
	, \underbrace{\mathbb S_\mathrm{odd} \bigl( I_1^{g_\sigma}(N_1^{g_\sigma})^{-1}{\rm d}z\bigr)^{\sharp_{g_\sigma}}}_{\beta^{(1)}_\sigma|_{S\DD}} \bigr) \in C^\infty(S\DD),\qquad \sigma\in \Sigma_\mathrm{s},
\]
where the X-ray transform and the $\sharp$-operator are defined with respect to the $g_\sigma$-geodesic flow projected onto the fixed manifold $S\DD$, as described in Section~\ref{s_upsilon}. Combining Proposition~\ref{thm_continuitygalore} and Theorem~\ref{thm_ghtop} then shows that \smash{$\sigma \mapsto \beta^{(0)}_\sigma$} is continuous from $\Sigma_\mathrm{s}$ into $C^\infty(S\DD)$. For \smash{$\beta^{(1)}_\sigma$}, one argues analogously, using instead of Theorem~\ref{thm_ghtop} that also
$\sigma\mapsto N_1^{g_{\sigma}}({\rm d}z)$, $ \Sigma_\mathrm{s} \to \rho^{-1/2}\Omega_1$.
is a continuous map. To see this, one reduces to the attenuated setting as in the proof of Proposition~\ref{prop_isonk} and then uses the continuity statement from Remark \ref{rem_attenuatedverstappen}.
The extension of~$\beta_\sigma|_{S \DD}$ to $\beta_\sigma \in C^\infty(Z)$ is clearly continuous and thus the proof is complete.

\subsection[Proof of Theorem 1.3]{Proof of Theorem~\ref{thm_globalbeta}} \label{proof_globalbeta}
On the disk $M=\DD$, we consider perturbations of a fixed conformally Euclidean metric
\[
	g_\kappa = {\rm e}^{2\sigma_\kappa}|{\rm d}z|^2,\qquad \sigma_\kappa=-\log\bigl(1+\kappa|z|^2\bigr),\qquad |\kappa|<1.
\]
Let $(\beta_\sigma\mid \sigma\in \Sigma_\mathrm{s})$ be the family of canonical $\beta$-maps from Theorem~\ref{thm_betafamily}. For $\sigma=\sigma_\kappa$, we know from Theorem~\ref{thm_constantcurvature} that $\beta_\kappa$ has a holomorphic blow-down structure, that is, the tuple $(\beta_\kappa,\sigma_\kappa)$ lies in the set $\mathcal I_+$ from Section~\ref{s_openness}. As explained below Theorem~\ref{thm_openbds}, the openness of $\mathcal I_+$ implies the existence of a neighbourhood $\mathcal U\subset \Sigma_\mathrm{s}$ of $\sigma_\kappa$, such that the family of $\beta$-maps satisfies~$
	(\beta_\sigma,\sigma)\in \mathcal I_+$ for all $\sigma\in \mathcal U.
$

Let us now view $g_\kappa={\rm e}^{2\sigma_\kappa}|{\rm d}z|^2$ as an element of $\Riem(\DD)$, the space of all Riemannian metrics on $\DD$, and apply the Riemann mapping theorem with continuous dependence (Theorem~\ref{RMT}). Let $P\colon \mathcal \Riem(\DD)\rightarrow C^\infty(\DD,\R)$ be the continuous map from Corollary \ref{cor_rmt}. Then $P(g_\kappa)=\sigma_\kappa$ and~${\mathcal V=P^{-1}(\mathcal U)\subset \Riem(\DD)}$ is an open neighbourhood of $g_\kappa$. Then, for $\sigma = P(g)$ there are maps as follows
\smash{$Z(\DD,g) \xrightarrow{\sim} Z\bigl(\DD,{\rm e}^{2\sigma}|{\rm d}z|^2\bigr) \xrightarrow{\beta_\sigma} \C^2$}.
Here the first map is the biholomorphism that lifts the isometry $(\DD,g)\cong \bigl(\DD,{\rm e}^{2\sigma}|{\rm d}z|^2\bigr)$ provided by Corollary \ref{cor_rmt}.
 The concatenation is then a holomorphic map from twistor space $Z=Z(\DD,g)$ into $\C^2$ with holomorphic blow-down structure.

\subsection[Proof of Theorem 1.4]{Proof of Theorem~\ref{TNNT}} As preparation for the theorem, we first prove the following.

\begin{Proposition}\label{thm_smallballs}
	Let $(M,g)$ be a Riemannian surface and $x_0\in M$. Write
\[
\bar B_\epsilon(x_0) = \{x\in M\mid d_g(x,x_0)\le \epsilon\}
\]
 for the closed geodesic ball around $x_0$ with radius $\epsilon >0$. Then there exists a family of smooth functions $\sigma_\epsilon\colon \DD\rightarrow \R$ $(\epsilon >0)$ such that
	\begin{enumerate}[label={\rm (\roman*)}]\itemsep=0pt
		\item there is an orientation preserving isometry
$\bigl({\bar B_\epsilon(x)},g \bigr) \cong \bigl(\DD,{\rm e}^{2(\sigma_\epsilon+\log \epsilon)}|{\rm d}z|^2\bigr)$;
		\item as $\epsilon \rightarrow 0$, we have $\sigma_\epsilon \rightarrow 0$ in $C^\infty(\DD,\R)$.
	\end{enumerate}
\end{Proposition}

Since this is a local result, we may assume without loss of generality that $M$ is complete. The desired isometry comes from first using geodesic normal coordinates and then the Riemann mapping theorem. Let $\big\{v,v^\perp\big\}\subset T_{x_0}M$ be an oriented $g$-orthonormal basis and define:
\begin{equation*}
\psi_\epsilon\colon\ \DD\rightarrow \bar B_\epsilon(x_0),\qquad \psi(z)= \exp_x\bigl( \epsilon \Re z \cdot v + \epsilon\Im z \cdot v^\perp\bigr)
\end{equation*}

\begin{Lemma}\label{lem_convtoeuclid}
As $\epsilon \rightarrow 0$, we have $\epsilon^{-2} \psi_\epsilon^*g \rightarrow |{\rm d}z|^2$ in $C^\infty\bigl(\DD,\otimes_S^2T^*\DD\bigr)$.
\end{Lemma}

\begin{proof}
We first consider the case $\epsilon =1$, in which case the pull-back $\psi_1^*g$ can be expressed in polar coordinates $(r,\theta)$ as
$\psi_1^*g = {\rm d}r^2 + f(r,\theta)^2 \cdot {\rm d}\theta^2$,
where $f\in C^\infty\bigl([0,1]\times \mathbb S^1\bigr)$ is determined through the Jacobi equation in $r$
\[
		\partial_r^2f + K_1\cdot f = 0 \qquad\text{on} \  [0,1]\times \mathbb S^1, \qquad f(0,\cdot) =0,\qquad \partial_{r}f(0,\cdot) \equiv 1
	\]
Here $K_1(r,\theta)=\psi^*K\bigl(r{\rm e}^{{\rm i}\theta}\bigr)$ is the Gauss curvature of $(M,g)$ in polar coordinates, which can be viewed as a function $K_1\in C^\infty\bigl([0,1]\times \mathbb S^1\bigr)
$. For $\epsilon>0$, we now introduce rescaled polar coordinates~$(s,\theta)$, where $r=\epsilon s$. If we define
\[
	f_\epsilon\in C^\infty\bigl([0,1]\times\mathbb S^1\bigr)\qquad \text{by}\quad f_\epsilon(s,\theta)=\epsilon^{-1}f(\epsilon s,\theta),
\]
then
\[
	{\rm d}r^2 + f(r,\theta)^2 {\rm d}\theta^2 = \epsilon^2 {\rm d}s^2 + f(\epsilon s,\theta)^2 {\rm d}\theta^2 = \epsilon^2\bigl({\rm d}s^2 + f_\epsilon(s,\theta)^2{\rm d}\theta^2\bigr),
\]
which is to say that $\epsilon^{-2} \psi_\epsilon^* g = {\rm d}s^2 + f_\epsilon^2 {\rm d}\theta^2$. Let $f_0(s,\theta)=s$, then the lemma is equivalent to
\[
	f_\epsilon \rightarrow f_0 \qquad \text{in}\quad C^\infty\bigl([0,1]\times \mathbb S^1\bigr).
\]
To prove the latter convergence result, we derive an ODE for $f_\epsilon$ and conclude by its continuous dependence on the parameters. Note that
\begin{gather*}
	\partial_s f_\epsilon(s,\theta)=\partial_r f(\epsilon s,\theta),\qquad
\partial_s^2f_\epsilon(s,\theta) =\epsilon \partial_r^2f(\epsilon s,\theta)= - \epsilon K_1(s\epsilon,\theta) f(\epsilon s,\theta).
\end{gather*}
Hence, if we define $K_\epsilon \in C^\infty\bigl([0,1]\times \mathbb S^1\bigr)$ by $K_\epsilon(s,\theta)=\epsilon^2 K_1(\epsilon s,\theta)$, then
\[
		\partial_s^2f_\epsilon + K_\epsilon f_\epsilon = 0 \qquad\text{on} \  [0,1]\times \mathbb S^1, \qquad f_\epsilon(0,\cdot) =0,\qquad \partial_{s}f_\epsilon(0,\cdot) \equiv 1.
	\]
As $K_\epsilon\rightarrow 0$ in $C^\infty\bigl([0,1]\times \mathbb S^1\bigr)$ for $\epsilon\rightarrow 0$, we can apply Corollary \ref{cor_pardep} to conclude.
\end{proof}

\begin{proof}[Proof of Proposition~\ref{thm_smallballs}]
Let $P\colon \Riem(\DD)\rightarrow C^\infty(\DD,\R)$ the map from Corollary \ref{cor_rmt}, where~${\Riem(\DD)\subset C^\infty\bigl(\DD,\otimes^2_ST^*\DD\bigr)}$ is the open cone of Riemannian metrics on $\DD$. With $\psi_\epsilon\colon \DD\rightarrow M$ as above, we define
$\sigma_\epsilon = P\bigl(\epsilon^{-2}\psi_\epsilon^*g\bigr)$, $ \epsilon >0$.
The preceding lemma, together with the continuity of $P$, imply that $\sigma_\epsilon\rightarrow 0$ in $C^\infty(\DD,\R)$, as $\epsilon\rightarrow 0$. Further, there are isometries
\[
	\bigl(\bar B_\epsilon(0),g\bigr)\cong (\DD,\psi_\epsilon^* g)\cong \bigl(\DD, {\rm e}^{2\sigma_\epsilon + 2\log \epsilon}|{\rm d}z|^2\bigr),
\]
provided by $\psi_\epsilon$ and the isometry from Corollary \ref{cor_rmt}, respectively.
\end{proof}

\begin{proof}[Proof of Theorem~\ref{TNNT}] Let $B_\epsilon(x_0)$ be the open geodesic ball of radius $\epsilon$ about a point $x_0\in M$.
For $\epsilon>0$ sufficiently small, we will obtain a local $\beta$-map on the open subset $U=Z(B_\epsilon(0),g)$ of twistor space $Z$. As any point in $Z$ is contained in such an open set $U$, this will prove the theorem.

Let $(\sigma_\epsilon\mid \epsilon >0)$ be the family of conformal factors from Proposition~\ref{thm_smallballs} and let $(\beta_{\sigma_\epsilon}\mid \epsilon>0)$ be the associated $\beta$-maps from Theorem~\ref{thm_betafamily}. We define a local $\beta$-map through the following concatenation:
\[
	\beta\colon U \cong Z\bigl(\DD^\circ,{\rm e}^{2\sigma_\epsilon+2\log \epsilon}|{\rm d}z|^2\bigr) \cong Z\bigl(\DD^\circ,{\rm e}^{2\sigma_\epsilon}|{\rm d}z|^2\bigr) \xrightarrow{\beta_{\sigma_\epsilon}}\C^2.
\]
Here the first biholomorphism is the lift of the isometry in Proposition~\ref{thm_smallballs} and the second one equals, in the model from Section~\ref{s_conformallyeuclidean}, simply the identity---this is also a biholomorphism, because the complex structure of twistor space is invariant under constant rescalings of the metric (that is, $\D_{\sigma} = \D_{\sigma+C}$ for any constant $C\in \R$).
For sufficiently small $\epsilon$, the map $\beta_{\sigma_\epsilon}$ has a holomorphic blow-down structure by Theorem~\ref{thm_openbds}
and in view of the discussion below Definition~\ref{def_bds} this implies properties \ref{thmtnnt1} and \ref{thmtnnt2}. The statement about point separation is immediate: If $q_1,q_2\in U\backslash SM$ are two distinct points, then $\beta(q_1)\neq \beta(q_2)$ and thus there is a holomorphic map $f\colon \C^2\rightarrow \C$ with $f(\beta(q_1))\neq f(\beta(q_2))$. Since $\beta$ is holomorphic, we have $\beta^*f\in \mathcal O(U)$, as desired.\end{proof}

\subsection[Proofs of Corollaries 1.5 and 1.6]{Proofs of Corollaries \ref{cor_curves} and \ref{cor_maps}}

\begin{proof}[Proof of Corollary \ref{cor_curves}]
	We consider the set $\Sigma'=\{\mu \in \Sigma\mid f_1 =f_2 \text{ in a neighbourhood of } \mu\}$,
which is obviously open in $\Sigma$. We are done once we have shown that it is also non-empty and closed.

{\it Non-empty:} Let $\mu_* \in \partial \Sigma$ and $p=f_1(\mu_*)=f_2(\mu_*)\in Z$. Let $U$ be as in Theorem~\ref{TNNT} and~${V=f_1^{-1}(U)\cap f_2^{-1}(U)}$. This is an open neighbourhood of $\mu_*$ in $\DD$. If $g\colon U\to \C$ is holomorphic, then also the functions $g\circ f_1$ and $g\circ f_2$ are holomorphic and agree on $V\cap \partial \Sigma$ (which is non-empty, as it contains $\mu_*$), hence $g\circ f_1 = g\circ f_2$ on all of $V$ by the identity principle. By Theorem~\ref{TNNT}, the holomorphic functions on $U$ separate points and thus we must have $f_1=f_2$ on~$V$, which shows that $\mu_*\in \Sigma$.

{\it Closed:} Let $(\mu_n)\subset \Sigma'$ be a sequence with limit $\mu_*\in \Sigma$. Since $f_1(\mu_n)=f_2(\mu_n)$, we also have $f_1(\mu_*)=f_2(\mu_*)\in Z$ and call this point $p$. Again, let $U$ be a neighbourhood of $p$ as in the Theorem~\ref{TNNT} and let $V=f_1^{-1}(U)\cap f_2^{-1}(U)$. If $g\colon U\to \C$ is holomorphic, then $h=g\circ f_1-g\circ f_2$ is holomorphic on $V$ and satisfies $h(\mu_n)=0$ for infinitely many $n$. By the identity principle~${h\equiv 0}$ on $V$ and, again using point separation, $f_1=f_2$ on $V$. This shows that $\mu_*\in \Sigma$.
\end{proof}

\begin{proof}[Proof of Corollary \ref{cor_maps}]
	Let $x\in M_1$ and let $f\colon \DD\rightarrow Z_{1,x}\subset Z$ be a holomorphic curve whose image is the fibre $Z_{1,x}$ over $x$. Then both $\Phi\circ f$ and $\Psi\circ f$ are holomorphic curves and by assumption they have the same boundary values. Hence $\Phi\circ f = \Psi\circ f$, which means that $\Phi$ and~$\Psi$ agree on the fibre $Z_{1,x}$. This holds for all $x\in M_1$ and thus $\Phi=\Psi$.
\end{proof}

\appendix

\section{Continuity theorems in geometry}\label{s_appa}

We recapitulate some aspects from geometric analysis with a focus on continuous dependence on the underlying geometry.

\subsection{Riemann mapping theorem}\label{s_RMT}
Define
\begin{gather*}
\Riem(\DD)= \big\{g\in C^\infty\bigl(\DD,\otimes_S^2T^*\DD\bigr)\mid g\text{ is a Riemannian metric}\big\},\\
\Bel(\DD) = \{m\in C^\infty(\DD,\C)\mid |m| < 1\}.
\end{gather*}
These are the spaces of {\it Riemannian metrics} and {\it Beltrami coefficients} on $\DD$, respectively, and they both lie open in the ambient $C^\infty$-spaces and as such they are Fr{\'e}chet manifolds. Each~${m\in \Bel(\DD)}$ encodes an orientation compatible complex structure on $\DD$ with anti-holomorphic tangent bundle
$
\D_m = \mathrm{span}_\C(\partial_{\bar z} + m \partial_z)\subset T_\C \DD$,
and vice versa, every complex structure on $\DD$ that is compatible with the standard orientation is given in terms of such a~Beltrami coefficient. In particular, the complex structure determined by $g\in \Riem(\DD)$ is given~by
\begin{equation*}
m_g(z)=\frac{\big\langle {\rm d}z,v+{\rm i}v^\perp\big\rangle}{\big\langle {\rm d}\bar z,v+{\rm i}v^\perp\big\rangle},\qquad \text{ where } \big\{v,v^\perp\big\}\subset T_z \DD \text{ is an oriented $g$-ONB}.
\end{equation*}
 Let $\Diff_+(\DD)$ be the orientation preserving diffeomorphisms of $\DD$, equipped with the $C^\infty$-topology. Moreover, let $\Diff_{+}(\DD,\{0,1\})\subset \Diff_+(\DD)$ be the closed subset of diffeomorphisms with $\psi(0)=0$ and $\psi(1)=1$.
 Then the following version of the Riemann mapping theorem was proved by
 Earle--Schatz in \cite[Theorem on p.~171]{EaSc70}.

\begin{Theorem}[RMT with continuous dependence]\label{RMT}
	For any $m\in \Bel(\DD)$, there exists a unique $\psi\in \Diff_+(\DD,\{0,1\})$ such that $\psi\colon (\DD,\D_m)\rightarrow (\DD,\D_0)$ is holomorphic $(\psi_*\D_m=\D_0)$. The map~${m\mapsto \psi}$ is a homeomorphism $
	\Bel(\DD)\cong \Diff_+(\DD,\{0,1\})$.
\end{Theorem}

\begin{Corollary}\label{cor_rmt}
There is a continuous map $P\colon \Riem(\DD)\rightarrow C^\infty(\DD,\R)$ such that
\begin{enumerate}[label=\rm(\roman*)]\itemsep=0pt
	\item $P\bigl({\rm e}^{2\sigma}|{\rm d}z|^2\bigr)=\sigma$ for all $\sigma\in C^\infty(\DD,\R)$;
	
	\item $(\DD,g) \cong \bigl(\DD,{\rm e}^{2\sigma}|{\rm d}z|^2\bigr)$ isometric for $\sigma=P(g)$ and $g\in \Riem(\DD)$.
\end{enumerate}
\end{Corollary}

\begin{proof}
Let $\psi \in \Diff_+(\DD,\{0,1\})$ the image of $m_g$ under the homeomorphism in Theorem~\ref{RMT}. Write $\phi=\psi^{-1}$, let $\mathbf 1\in C^\infty(\DD,T\DD)$ be a vector field with $|\mathbf 1|_{|{\rm d}z|^2}\equiv 1$ and define
$
	P(g) := \log |\mathbf 1|_{\phi^*g}.
$ Using that $\Diff_+(\DD,\{0,1\})$ is a topological group that acts continuously on $\Riem(\DD)$ by pull-backs, one readily verifies that $P$ is continuous.
Let us verify the two properties: If~${g={\rm e}^{2\sigma}|{\rm d}z|^2}$, then $m_g=0$ and $\psi=\phi=\Id$, such that \smash{$P(g)=\log (e^\sigma|\mathbf 1|_{|{\rm d}z|^2}) =\sigma$}. For general $g\in \Riem(\DD)$, the map $\phi=\psi^{-1}\colon \bigl(\DD,|{\rm d}z|^2\bigr) \rightarrow (\DD,g)$ is conformal, which is to say that~${\phi^*g={\rm e}^{2\sigma}|{\rm d}z|^2}$ for {\it some} $\sigma\in C^\infty(\DD,\R)$. Clearly $P(g)=\sigma$, and the proof is complete.
\end{proof}

\subsection{Integration of vector fields} Let $\mathcal M$ be a closed $d$-dimensional manifold and $T\ge 1$. Let $\mathcal M_T=\mathcal M\times [0,T]$. Then an {\it isotopy} on $\mathcal M$ is a smooth map $\Phi\colon \mathcal M_T\rightarrow \mathcal M$
such that $\Phi_t(x)=\Phi(x,t)$ defines a diffeomorphism of $\mathcal M$ for all $0\le t \le T$, with $\Phi_0=\operatorname{Id}_\mathcal M$. We denote the set of all isotopies on $\mathcal M$ by
\[
	\mathrm{Iso}_T(\mathcal M) \subset \{\Phi\in C^\infty(\mathcal M_T,\mathcal M)\mid \Phi(\cdot,0)=\Id_\mathcal M\}.
\]
The set on the right lies closed inside the ambient $C^\infty$-space and contains $\mathrm{Iso}_T(\mathcal M)$ as an open subset.
To every isotopy $\Phi$, we can associate a time-dependent vector field $X$, i.e., an element of $C^\infty(\mathcal M_T,T\mathcal M)$, via the relation
$\partial_t \Phi(x,t) = X(\Phi_t(x),t)$, $ (x,t)\in \mathcal M_T$.
Vice versa, every such vector field $X$ can be integrated to an isotopy $\Phi^X$ by solving this ordinary differential equation. We record here the---certainly well-known---observation that the map~${X\mapsto \Phi^X}$ is continuous in the $C^\infty$-topology.

\begin{Theorem}\label{thm_isotopies} The map $X\mapsto \Phi^X$ is a homeomorphism $C^\infty(\mathcal M_T,T\mathcal M)\cong \mathrm{Iso}_T(\mathcal M)$.
\end{Theorem}

\begin{proof}[Sketch of the proof]
The result can be reduced to the continuity of the exponential map
\[
	X\mapsto \Phi^X_1,\qquad C^\infty(\mathcal M,T\mathcal M)\rightarrow \mathrm{Diff}(\mathcal M)
\]
for time-independent vector fields, which is proved by Ebin--Marsden in \cite[Theorem 3.1]{EbMa70}. Precisely, the authors prove continuity into $\Diff^s(\mathcal M)$, the space of diffeomorphisms of Sobolev $H^s$-regularity, for all $s>d/2+2$. Since $\Diff(\mathcal M)$ carries the initial topology with respect to the inclusions into $\Diff^s(\mathcal M)$, this gives the desired continuity.

To obtain continuity of the full isotopy, one can embed $\mathcal M_T$ into a closed manifold $\mathcal N$ of dimension $d+1$ and consider a smooth vector field $Y$ on $\mathcal N$ with $Y(x,t)=tX(x)$ for $(x,t)\in \mathcal M_T\subset \mathcal N$. The isotopy $\Psi^Y\in \mathrm{Iso}_1(\mathcal N)$ leaves $\mathcal M_T$ invariant and satisfies
\[\Psi_s^Y(x,t)=\bigl(\Phi_s^{tX}(x),t\bigr)=\bigl(\Phi^X_{st}(x),t\bigr),\qquad (x,t)\in \mathcal M_T\subset \mathcal N, 0\le s \le 1.\]
The full isotopy $\Phi^X\in \mathrm{Iso}_T(\mathcal M)$ can be recovered from $\Psi_1^Y$, and after constructing a continuous extension map $X\mapsto Y$, this allows to derive continuity of the map
\[
 X\mapsto \Phi^X,\qquad C^\infty(\mathcal M,T\mathcal M)\rightarrow \mathrm{Iso}_T(\mathcal M).
\]	
The passage to time-dependent vector fields is done similarly, by extending $X$ to a time-independent vector field on $\mathcal N$ and then applying the preceding result. This proves that also the following map is continuous:
\[
 X\mapsto \Phi^X,\qquad C^\infty(\mathcal M_T,T\mathcal M)\rightarrow \mathrm{Iso}_T(\mathcal M).	
\]
It is clear that this is a bijection, with inverse provided by differentiating a given isotopy. This is clearly continuous and so we indeed obtain a homeomorphism.
\end{proof}

For convenience, we record the following result as a corollary (it could also be derived using variation of constants).

\begin{Corollary}\label{cor_pardep}
Let $M$ be a closed manifold and $M_T=M\times[0,T]$. For $A\in C^\infty(M_T)$ and $a_0,b_0\in C^\infty(M)$, denote with $f_A\in C^\infty(M_T)$ the unique solution to the initial value problem
\[
	\partial_{t}^2f + Af = 0 \qquad\text{on}\  M_T,\qquad (f,\partial_tf)\vert_{t=0} = (a_0,b_0).
\]	
Then the map $A\mapsto f_A$ is continuous from $C^\infty(M_T)$ into $C^\infty(M_T)$.
\end{Corollary}

\begin{proof}
	On $\mathcal M = M_x \times \R_a\times \R_b$ consider the time-dependent vector field $X(x,a,b,t)={b \partial_a - A(x,t)a \partial_b}$, whose isotopy satisfies $\Phi^X(x,a_0(x),b_0(x),t)=(x,f_A(x,t),\partial_t f_A(x,t))$ for all ${(x,t)\in M_T}$. Projecting onto the second component, we retrieve $f_A$ and the corollary follows from the preceding theorem.
\end{proof}

\subsection{Riemannian extensions}

\begin{Lemma}\label{lem_rext}
Let $M$ be a compact manifold with boundary, embedded into a closed manifold~$N$ of the same dimension. Then for any Riemannian metric $g_0$ there is a neighbourhood $\mathcal U$ inside~$C^\infty\bigl(M,\otimes^2_ST^*M\bigr)$ and a continuous map
\[
	E\colon\ \mathcal U \rightarrow C^\infty\bigl(N,\otimes^2_ST^*N\bigr), \qquad Eg|_{M}=g
\]
such that $Eg$ is a Riemannian metric on $N$ for all $g\in \mathcal U$.
\end{Lemma}

 \begin{proof} First, there is a continuous {\it linear} extension operator
\begin{equation*}
E_0\colon\ C^\infty\bigl(M,\otimes_S^2T^*M\bigr)\rightarrow C^\infty\bigl(N,\otimes_S^2T^*N\bigr),\qquad E_0f\vert_M = f,
\end{equation*}
this is proved, e.g., by Seeley in \cite{See64}. Let $h$ be an arbitrary metric on $N$ and let $\epsilon>0$ be so small that the symmetric $2$-tensor $g_0 - 2\epsilon h$ is positive definite on $M$. For $f\in C^\infty\bigl(M,\otimes_S^2T^*M\bigr)$, define
\begin{equation*}
U(f,\delta) =\{x\in N\mid (E_0f-\delta h)_x \text{ is positive definite}\}, \qquad \delta>0.
\end{equation*}
Then $U_0=U(g_0,2\epsilon)$ is an open neighbourhood of $M$ in $N$. Let $\chi\in C^\infty(S,[0,1])$ be $\equiv 1$ in a~neighbourhood of $M$ and $\equiv 0$ on $N\backslash U_0$. We claim that
\[
\mathcal U :=\bigl\{g\in C^\infty\bigl(M,\otimes^2_S T^*M\bigr)\mid U(g,\epsilon)\supset \bar U_0\bigr\},\qquad Eg := \chi E_0g + (1-\chi)h
\]
have the desired properties. Clearly, $\mathcal U$ is open and contains $g_0$. It remains to show that $(Eg)_x$ is positive definite for all $x\in N$. If $\chi(x)=0$, this is obvious. If $\chi(x)>0$, then $x\in \bar U_0\subset U(g,\epsilon)$, and hence each term in
\begin{equation*}
(Eg)_x = \chi(x)(E_0g-\epsilon h)_x + (1+\epsilon \chi(x) - \chi(x))h_x
\end{equation*}
is positive definite.
\end{proof}

\begin{Remark}\label{lem_rext2}
It is sometimes convenient to define an extension operator on an {\it arbitrary} open subset $\mathcal V\subset C^\infty\bigl(M,\otimes^2_ST^*M\bigr)$ of a Riemannian metric. This is always possible by means of a continuous partition of unity on $\mathcal V$. (The $C^\infty$-topology being metrisable implies that $\mathcal V$ is paracompact.)

\end{Remark}

\subsection{Extension of embeddings} Let $Y$ be a compact manifold with boundary and let $X$ be a closed manifold. The boundary restriction of $C^\infty$-embeddings,
$\operatorname{Emb}(Y,X)\to \operatorname{Emb}(\partial Y,X)$, $ f \mapsto f\vert_{\partial Y}$,
is a continuous map with respect to the $C^\infty$-topologies. Palais showed that this map is a locally trivial fibration (see \cite{Pal60}, the case with boundary is discussed in Section 6). We record a simple consequence of this.

\begin{Proposition}\label{prop_palais}
The image $\mathcal U\subset \operatorname{Emb}(\partial Y,X)$ of the restriction map is open and there exists a continuous extension map
$E\colon \mathcal U\to \operatorname{Emb}(Y,X)$, $ Ef|_{\partial Y} = f$.
\end{Proposition}

\subsection{Whitney folds}\label{s_whitney}

Let $X$ and $Y$ be closed orientable $n$-manifolds and $S\subset Y$ a closed embedded hypersurface. We wish to consider smooth maps and involutions
$(f,\alpha)\in C^\infty(Y,X)\times \Diff(Y)$ with $ f\circ \alpha = f $ and~${\alpha\circ \alpha=\Id}$,
such that $f$ has a~{\it Whitney fold} at every point in $S$ (see also Section~\ref{s_calpha}), having~$\alpha$ as its non-trivial involution. The purpose of this section is to describe a setting where the local theory from \cite[Section C.4]{Hor07} can be globalised and to supplement the theory with adequate continuity theorems.

To ensure that $Y$ supports global involutions with fixed point set $S$, we will from now on impose the following condition: {\it $Y$ is the union of two connected submanifolds $Y_1,Y_2\subset Y$ having disjoint interiors and a common boundary $\partial Y_1=\partial Y_2 =S$}. To further ease the discussion, we fix two volume forms $\omega_X$ and $\omega_Y$ and, given a smooth map $f\colon Y\to X$, write $\lambda_f\in C^\infty(Y,\R)$ for the function defined by $f^*\omega_X=\lambda_f \omega_Y$.

\begin{Definition}\label{def_whitney} Define the following sets:
\begin{align*}
	\mathcal W_\loc^S(Y,X)&= \left\{f\in C^\infty(Y,X)\mid \begin{array}{ll}{\rm (1)}&\lambda_f=0 \text{ on } S\\
	 {\rm (2)}&|d\lambda_f| + |{\rm d}f| > 0 \text{ pointwise on } TY|_S \\
	 {\rm (3)}&f|_S\in \operatorname{Emb}(S,X)
	 \end{array}\right\},\\
	 \mathcal W^S(Y,X) &=\left\{f\in \mathcal W^S_\loc(Y,X)\mid \begin{array}
{ll}
 {\rm (4) } & |\lambda_f|>0\text{ pointwise on } Y\backslash S\\
{\rm (5)}& f|_{Y_k}\colon Y_k \to X \text{ is a topological}\\
	 &\text{embedding for } k=1,2\\
	 {\rm (6)}&f(Y_1)=f(Y_2)
	 \end{array}\right\}.
\end{align*}
\end{Definition}

\begin{Proposition}[local and global folds and their involutions] \
	\begin{enumerate}[label=\rm(\roman*)]\itemsep=0pt
		\item $\mathcal W_\loc^S(Y,X)$ is an open subset of $\{f\in C^\infty(Y,X)\mid \lambda_f = 0\text{ on } S\}$;
		
		\item for all $f\in \mathcal W_\loc^S(Y,X)$ there exists an $\epsilon>0$ and embeddings \smash{$\varphi_X^f\colon  S\times [-\epsilon,\epsilon]\to X$} and \smash{$\varphi_Y^f\colon  S\times [-\epsilon,\epsilon]\to Y$}
		such that the following diagram commutes:
		\[
		 	\begin{tikzcd}
		 		S \arrow["{y\mapsto (y,0)}"{yshift=1ex}]{r} \arrow[hookrightarrow]{rd} & S\times [-\epsilon,\epsilon] \arrow["{(y,t)\mapsto \bigl(y,t^2\bigr)}"{yshift=1ex}]{r} \arrow["{\varphi^f_Y}"]{d} & S\times [-\epsilon,\epsilon] \arrow["{\varphi^f_X}"]{d}\\
		 		& Y \arrow["{f}"]{r} & X.
		 	\end{tikzcd}
		\]
		\item every $f_0\in \mathcal W_\loc^S(Y,X)$ has an open neighbourhood $\mathcal U$ such that for $f\in \mathcal U$ one can choose $\epsilon$ to be uniform and \smash{$f\mapsto \bigl(\varphi^f_X,\varphi^f_Y\bigr)$} to be continuous as a map
		\[
		\mathcal U\to \operatorname{Emb}(S\times [-\epsilon,\epsilon],X) \times \operatorname{Emb}(S\times [-\epsilon,\epsilon],Y).
		\]
		\item $\mathcal W^S(Y,X)\subset \mathcal W^S_\loc(Y,X)$ is open;

		\item for every $f\in \mathcal W^S(Y,X)$, there exists a unique $\alpha \in \Diff(Y)\backslash \{\Id\}$ with $f\circ \alpha = f$ and $\alpha^2 = \Id$. The following map is continuous $f\mapsto \alpha$, $ \mathcal W^S(Y,X) \to \Diff(Y)$.

			\end{enumerate}
\end{Proposition}

\begin{proof}
Part (i) is obvious. For (ii), we may choose $\varphi_X\colon S\times [-1,1]\to X$ to be any embedding that extends $f$ in the sense that $\varphi_X(y,0)=f(y)$ for all $y\in S$.
Then in a neighbourhood~$U$ of~${S\subset Y}$ the map
\smash{$\pi:=\mathrm{pr}_1\circ
(\varphi_X)^{-1}	\circ f\colon  U \to S$},
is a submersion with $\ker d\pi|_S = \ker {\rm d}f|_S$ being transversal to $S$. Hence, after shrinking $U$ if necessary, $\pi\colon U\to S$ is a fibre bundle with fibre~$\R$. As $S\subset U$ is two-sided, there exists a trivialisation
$\tau\colon S\times \R \xrightarrow{\sim} U$
with $\tau(y,0)=y$ for all $y\in S$. Define $F\colon S\times \R\to \R$ by~${F(y,s) = \mathrm{pr}_2\circ \varphi_X^{-1}\circ f\circ\tau(y,s)}$. Then for all $y\in S$ we have~$F(y,0) = \partial_s F(y,0)=0 \neq \partial_s^2 F(y,0)$, the last `$\neq$' being due to property (2). For sufficiently small $c,\delta>0$, we have~$
	\partial_s^2 F(y,s) \ge c$ ($(y,s)\in S\times [-\delta,\delta]$),
possibly after flipping a sign in the definition of~$\varphi_X$.
Taylor expanding~$F(y,\cdot)$ about $s=0$ yields
\[
	F(y,t) = s^2 R(y,t),\qquad R(y,s) = \int_0^1(1-\sigma) \partial_{s}^2F(y,s\sigma) {\rm d}\sigma \ge c/2.
\]
Consequently, $\sqrt{R(y,s)}$ defines a smooth function on $S\times [-\delta,\delta]$ and $s\mapsto s \sqrt{R(y,s)}$ has a~positive derivative for $|s| < c/||\partial_s R||_{\infty}$. Decreasing $\delta$ if necessary, the map
\[
\kappa\colon\ S\times [-\delta,\delta]\to S\times \R,\qquad \kappa(y,s) = \bigl(y, s \sqrt{R(y,s)} \bigr)
\] thus becomes a diffeomorphism onto its image. If $\epsilon>0$ is so small that $S\times [-\epsilon,\epsilon]$ lies in the image, then $\varphi_Y = \tau\circ \kappa^{-1}\colon S\times [-\epsilon,\epsilon]\to Y$ is the desired embedding.

For (iii), let us revisit the choices made in the construction of the embeddings $\varphi_X$ and $\varphi_Y$. The definition of $\varphi_X$ relies on extending the smooth embedding $f\colon S\to X$ and this can be made continuous in $f$ due to Proposition~\ref{prop_palais}. Next, for $f$ near $f_0$ the choice of trivialisation~$\tau$
can be fixed as follows: Upon choosing a Riemannian metric on $Y$, there is a unique unit vector field $\xi_0$ spanning $\ker d\pi$ near $S$ and pointing inside $Y_1$ along $S$. We may then define~$\tau_0(\cdot,s)$ as the time-$s$-map of its flow, at least if $|s|$ is sufficiently small. The trivialisation $\tau$ is then obtained from $\tau_0$ by reparametrising the $s$-parameter, which can be done uniformly for $f$ in a~neighbourhood of~$f_0$. With all choices fixed, the remaining construction of $\varphi_Y$ relies only on elementary calculus and as such is continuous in $f$.

For (iv), we fix $f_0\in \mathcal W^S(Y,X)$ and let $\mathcal U$ and $\epsilon>0$ be as in (iii). Then the sets
\[
	K_k = Y_k \backslash \varphi_Y^{f_0}\left(S\times (-\epsilon/2,\epsilon/2)\right), \qquad k\in \{1,2\},
\]
are compact and connected. Consider the following conditions on $f\in \mathcal U$:
\begin{alignat*}{3}
&	{\rm(a)} \ Y_k\backslash K_k^\circ \subset \varphi_Y^f\left(S\times (-\epsilon,\epsilon)\right),\qquad && {\rm (b)} \  \inf_{K_k} |\lambda_f| > 0,&\\
&	{\rm (c)} \ f(K_k)\cap f(S)=\varnothing,\qquad &&{\rm (d)} \ f(K_1^\circ) \cap f(K_2^\circ) \neq \varnothing.&
\end{alignat*}
These conditions are all satisfied for $f_0$ and further they define an open subset of $\mathcal U$. We claim that they imply that $f\in \mathcal W^S(Y,X)$, proving that $f_0$ is an interior point.

To prove the claim, suppose that $f\in \mathcal U$ satisfies conditions (a)--(d). Then property (4) can be checked pointwise on the sets $K_1,K_2$ and a tubular neighbourhood of $S$, where it follows from (b) or the normal form of $f$, respectively.
Due to (c) and (d), there is a unique connected component $W$ of $X\backslash f(S)$ that contains both $f(K_1)$ and $f(K_2)$. In order to verify conditions~(5) and (6) it suffices to show that
$f\colon Y_k \to \bar W$ is a homeomorphism $(k=1,2)$.
To see this, we first observe that $f$ is a {\it local} homeomorphism in this setting. At points in $K_k$ this follows from (4) and (c) and at points in $Y_k\backslash K_k$ this can be verified in the local normal form. As both~$Y_k$ and~$\bar W$ are compact and connected, it follows that $f\colon Y_k\to \bar W$ is a covering map and due to (3) the degree has to be $1$.

Finally, for (v) the involution $\alpha$ can be defined by $\alpha|_{Y_{k}} = f|_{Y_{j}}^{-1}\circ f|_{Y_k}$ ($\{j,k\} =\{1,2\}$) and is clearly unique. Moreover, $\alpha$ leaves $Y\backslash S$ invariant and as an element $\alpha|_{Y\backslash S} \in \Diff(Y\backslash S)$ it depends continuously on $f\in \mathcal W^S(Y,X)$, because it is explicitly given in terms of $f$ and its inverse. For $(x,t)\in S\times (-\epsilon,\epsilon)$, we have $\alpha\circ\varphi_X^f(y,t) = \varphi_X^f(y,-t)$ and thus using $\varphi_X^f$ as a local chart we see that also near $S$ the involution $\alpha$ is smooth and depends continuously on~$f$.
\end{proof}

The main theorem about pull-backs by Whitney folds---stated here in its global form---asserts that the following non-linear sequence is exact
\begin{equation}\label{seswhitney}
	\mathcal W^S(Y,X)\times C^\infty(X)\to \mathcal W^S(Y,X)\times C^\infty(Y)\to C^\infty(Y).
\end{equation}
Here we first map $(f,w) \mapsto (f,f^*w)$ and then $(f,h)\mapsto h\circ \alpha- h$, where $\alpha$ is the involution associated to $f$. Exactness means that the image of the first map equals the $0-$preimage of the second map. For the latter, we also write
\[
	\mathcal X = \big\{(f,h)\in \mathcal W^S(Y,X)\times C^\infty(Y)\mid h = h\circ \alpha\big\}.
\]

\begin{Theorem}\label{thm_seswhitney} The non-linear sequence \eqref{seswhitney} admits a continuous left-split. Precisely, there exists a continuous map
$(f,h)\mapsto w$, $ \mathcal X \to C^\infty(X)$
such that $f^*w = h$.
\end{Theorem}

\begin{proof}[Sketch of the proof]
	The assertion is non-trivial only near the fold, where one may pass to the normal form provided in the preceding proposition. Writing
	\[
		\mathcal X_0 = \{h\in C^\infty\left(S\times (-\epsilon,\epsilon)\right)\mid h(y,t) = h(y,-t) \},
	\]
	it then suffices to find a continuous {\it linear} map $h\mapsto w $, $ \mathcal X_0 \to C^\infty(S\times (-\epsilon,\epsilon))$
	such that $w\bigl(y,t^2\bigr)=h(y,t)$ for all $(y,t)\in S\times (-\epsilon,\epsilon)$. This however can be achieved by the same methods as in \cite[Theorem C.4.4]{Hor07}, using the open mapping theorem to establish continuity.
\end{proof}

\section{Continuous dependence of the normal operator} \label{s_appb}

We recapitulate the proof that the {\it normal operator} of a simple manifold (defined in Section~\ref{s_geoxray}) is a classical pseudodifferential operator and show that as such it depends continuously on the metric.

\subsection[Classical psido and their topology]{Classical $\boldsymbol{\psi}$do and their topology}
A smooth function $a\colon \R^d_x\times \R^d_\xi\rightarrow \C$ is called a {\it classical symbol} of order $m\in \R$, if
\begin{equation}\label{classical2}
	a(x,\xi)=|\xi|^m \hat a(x,\xi/|\xi|,1/|\xi|),\qquad |\xi|>0,
\end{equation}
for some $\hat a \in C^\infty\bigl(\R^d\times \mathbb S^{d-1}\times [0,\infty)\bigr)$. The space of all such symbols is denoted with $S_\cl^m\bigl(T^*\R^d\bigr)$ and it is turned into a Fr{\'e}chet space by the identification with
\[
	\big\{(a,\hat a)\in C^\infty\bigl(\R^d\times \R^d\bigr)\times C^\infty\bigl(\R^d\times \mathbb S^{d-1}\times [0,\infty)\bigr)\mid \text{\eqref{classical2} holds}\big\},
\]
which is closed in the $C^\infty$-topology. A symbol $a\in S^m_\cl\bigl(T^*\R^d\bigr)$ is called {\it even}, if the function $\hat a$ from \eqref{classical2} has a smooth extension to $\R^d\times \mathbb S^{d-1}\times \R$ with the property
\[
	\hat a(x,-\omega,h)= \hat a(x,\omega,-h),\qquad (x,\omega,h)\in \R^d\times \mathbb S^{d-1}\times \R.
\]
The set of even symbols defines a closed subspace $S^m_\ev\bigl(T^*\R^d\bigr)$ of $S^m_\cl\bigl(T^*\R^d\bigr)$.

\begin{Remark} Taylor expanding $\hat a(x,\omega,h)$ in $h=0$ yields an asymptotic expansion
\[
	a(x,\xi) \sim \sum_{k\ge 0} a_k(x,\xi),\qquad a_k(x,\xi)=|\xi|^{m-k} \partial_h^k\hat a(x,\xi/|\xi|,0)/k!,\qquad |\xi|>0,
\]
which is frequently taken as equivalent definition of classicality (cf.\ \cite{Shu01,Wun13}). From this point of view, evenness of $a$ is the assertion that $a_k(x,-\xi)=(-1)^{k} a_k(x,\xi)$ for all $k\ge 0$ (cf.~\cite[Definition 4.6]{MNP19a}).
\end{Remark}

For $a\in S^m_\cl\bigl(T^*\R^d\bigr)$, the {\it quantisation map}
\[
	\Op\colon\ S^m_\cl\bigl(\R^d\bigr)\rightarrow \mathcal D'\bigl(\R^d\times \R^d\bigr),\qquad \Op(a)(x,y)=
	\frac 1{(2\pi)^d} \int_{\R^d}{\rm e}^{{\rm i}(x-y)\xi}a(x,\xi) {{\rm d}\xi},
\]	
interpreted as oscillatory integral, is well-defined, continuous and injective. Identifying operators with their Schwartz kernels, the space of classical pseudodifferential operators on $\R^d$ can then be defined as
$\Psi^m_\cl\bigl(\R^d\bigr) = \Op\bigl(S^m_\cl\bigl(T^*\R^d\bigr)\bigr) + C^\infty\bigl(\R^d\times \R^d\bigr)$.
To topologise this we give an alternative definition, valid on any open set $U\subset \R^d$. We define $\Psi^m_\cl(U)\subset \mathcal D'(U\times U)$ to consist of those Schwartz kernels $A(x,y)$ for which
\begin{enumerate}[label=(\alph*)]\itemsep=0pt
	\item $A$ is smooth away from the diagonal $\Delta_U=\{(x,y)\in U\times U\mid x=y\}$;
	
	\item if $\varphi \in C_c^\infty(U)$, then $A_\varphi(x,y)=\varphi(x)A(x,y)\varphi(y)$ lies in $\Op\bigl(S^m_\cl\bigl(T^*\R^d\bigr)\bigr)$.
\end{enumerate}
For $U=\R^d$, this is equivalent to the preceding definition.
Writing $A_\varphi=\Op(a_\varphi)$, we equip~$\Psi^m_\cl(U)$ with the initial topology for the collection of maps
\begin{equation}\label{toppsido}
	\Psi^m_\cl(U)\xrightarrow{A\mapsto A\vert_{U\times U\backslash \Delta_U}} C^\infty(U\times U\backslash \Delta_U),\qquad \Psi^m_\cl(U)\xrightarrow{A\mapsto a_\varphi} S^m_\cl\bigl(T^*\R^d\bigr),
\end{equation}
where $\varphi$ runs through all elements of $C_c^\infty(U)$. In other words, a sequence of operators in $\Psi^m_\cl(U)$ converges if and only if their kernels converge in the $C^\infty$-topology away from the diagonal, and all local symbols converge in $S^m_\cl\bigl(T^*\R^d\bigr)$.
With this topology, $\Psi^m_\cl(U)$ is a Fr{\'e}chet space---as the completeness is crucial in this article, we include a brief sketch of this well-known fact.

\begin{Lemma}
	$\Psi_\cl^m(U)$ is a Fr{\'e}chet space.
\end{Lemma}

\begin{proof}[Sketch of the proof]
	Let $U=\bigcup_{k=1}^\infty V_k$ be a locally finite open cover with $\bar V_k\subset U$ ($k=1,2,\dots$). Define $W_0=U\times U\backslash \Delta_U$, $W_k=V_k\times V_k$ for $k\in \mathbb N$, as well as
	\[
		\tilde \Psi^m_\cl(U)\subset \bigg\{(A_0,a_1,a_2,\dots)\in C^\infty(W_0)\times \prod_{k\ge 1}S^m_\cl\bigl(T^*\R^d\bigr)\mid \eqref{dagger} \text{ holds}\bigg\},
	\]
	where the tuples are subjected to the constraint
	\begin{equation}\label{dagger}
		A_j=A_k \qquad\text{on}\  W_j\cap W_k \quad(j,k\ge 0),\qquad \text{where}\quad A_k = \Op(a_k)\quad(k\ge 1).
	\end{equation}
	If \eqref{dagger} is satisfied, then there is a Schwartz kernel $A\in \mathcal D'(U\times U)$ with $A=A_k$ on $W_k$ ($k\ge 0$)
	 and sending a given tuple to this kernel yields a map
$\tilde \Psi_\cl^m(U) \rightarrow \mathcal D'(U\times U)$.
	One checks that this map induces a homeomorphism $\tilde \Psi^m_\cl(U)\cong \Psi_\cl^m(U)$ and as the former space is a closed subspace of a countably infinite product of Fr{\'e}chet spaces, also $\Psi^m_\cl(U)$ is a Fr{\'e}chet space---we leave the details to the reader.
\end{proof}

On a manifold $X$ of dimension $d$, the space $\Psi^m_\cl(X)\subset \mathcal D'(X\times X)$ is defined by replacing condition (b) above with
\begin{itemize}\itemsep=0pt
	\item[(b$'$)] if $\psi\colon V\xrightarrow{\sim} U\subset \R^d$ is a local chart and $\varphi\in C_c^\infty(U)$, then $A_{\varphi,\psi}(x,y) = \varphi(x) A\bigl(\psi^{-1}(x),\allowbreak\psi^{-1}(y)\bigr) \varphi(y)$ lies in $\Op\bigl(S^m_\cl\bigl(T^*\R^d\bigr)\bigr)$.
\end{itemize}
Analogous considerations then give a natural Fr{\'e}chet space structure on $\Psi^m_\cl(X)$ that coincides with the one above when $X=U\subset \R^d$. By replacing classical symbols at each step with even symbols, we also obtain the closed subspace $\Psi^m_\ev(X)\subset\Psi^m_\cl(X)$ of {\it even pseudodifferential operators}.

\subsection{Fourier transform of homogeneous functions}
Suppose $Q\colon \R^d_x\times \bigl(\R^d_z\backslash 0\bigr) \times \R_h\rightarrow \C$ is smooth and satisfies, for some $\lambda\in \R$, the homogeneity property
$Q(x,tz,h)=t^\lambda Q(x,z,th)$, $ t>0$.
For later use, we wish to consider the inverse Fourier transform of $Q$ in the $z$-variable, restricted to frequencies $\omega \in \mathbb S^{d-1}$. Formally, this yields a~function
\[
	T Q(x,\omega,h)=\int_{\R^d} {\rm e}^{{\rm i}z\omega} Q(x,z,h) {\rm d}z,\qquad (x,\omega,h)\in \R^d\times \mathbb S^{d-1}\times \R.
\]
Due to the homogeneity, $Q$ may be recovered from its restriction to $|z|=1$ as $Q=E_\lambda(Q|_{|z|=1})$, where the extension operator $E_\lambda$ is defined by
\[
	E_\lambda q(x,z,h)=|z|^\lambda q(x,z/|z|,|z|h),\qquad q\in C^\infty\bigl(\R^d\times \mathbb S^{d-1}\times \R\bigr).
\]

\begin{Lemma}\label{tlambda}
	If $\lambda>-d$, then $T_\lambda=T E_\lambda$ defines a continuous linear map
	\[
	T_\lambda\colon\ C_c^\infty\bigl(\R^d\times \mathbb S^{d-1}\times \R\bigr) \rightarrow C^\infty\bigl(\R^d\times \mathbb S^{d-1}\times \R\bigr).
	\]	
\end{Lemma}

\begin{proof}[Sketch of the proof]
It is enough to show continuity $T_\lambda\colon C_K^\infty\to C^\infty$ for all compacts $K\subset \R^d\times \mathbb S^{d-1} \times \R$, where $C^\infty_K$ stands for smooth functions supported in $K$. Showing continuity into~$\mathcal S'\bigl(\R^{2d+1}\bigr)$ is straightforward and, by virtue of Lemma \ref{upgrade}, this can be upgraded to continuity into a $C^\infty$-space, if one shows that $T_\lambda Q$ is smooth on $\R^d\times \bigl(\R^d\backslash 0\bigr) \times \R$. The latter assertion is standard if $Q$ is independent of the parameters $x$ and $h$ (see, e.g., \cite[Section~7.4]{Ste93}) and the arguments carry over to the present setting.
\end{proof}

\subsection{A continuous family of pseudodifferential operators}\label{ctspsido}
 Let $\Psi^m_\cl(M^\circ)$ denote the space of classical pseudodifferential operators of order $m\in \R$ on the interior $M^\circ$ of $M$. It is well known (see, e.g., \cite[Theorem~8.1.1]{PSU23}) that
$
N_0^g\in \Psi^{-1}_\cl(M^\circ)$,
provided~$g$ is simple. Here we show that {\it as classical pseudodifferential operator}, it depends continuously on the metric $g$.

\begin{Proposition}\label{claim2}
The following map is continuous:
$
g\mapsto N_0^g$, $ \Ms \rightarrow \Psi^{-1}_\cl(M^\circ)$.
Here $\Psi^{-1}_\cl(M^\circ)$ carries its natural Fr{\'e}chet space topology.
\end{Proposition}

In the setting of closed Anosov manifolds, a similar continuity statement was proved in \cite[Proposition~4.1]{GKL22}. Near the diagonal of $M^\circ \times M^\circ$ their analysis essentially carries over to the present setting and yields continuity into $\Psi^{-1}(M^\circ)$, that is, into the weaker topology of non-classical pseudodifferential operators. Keeping this in mind, we will keep the following discussion brief and focus on classicality.

\begin{proof}{Proof of Proposition~\ref{claim2}} Simple manifolds are diffeomorphic to a compact ball in $\R^d$, so without loss of generality we assume that $M=\overline U$, where $U=\big\{x\in \R^d \mid |x|<1\big\}$, with metric $g=(g_{ij})$ given in terms of Euclidean coordinates. By \cite[Lemma~8.1.10]{PSU23}, the Schwartz kernel of $N^g_0$ equals
\[
	N^g_0(x,y) = \frac{1}{d_g(x,y)^{d-1}} \times 2 J_g(x,y) \sqrt{g(y)},\qquad (x,y)\in U\times U,
\]
where $d_g$ is the geodesic distance and $J_g$ is a Jacobian factor. They depend continuously on the metric in the sense that the following map is continuous:
\begin{equation}\label{ctsexc}
	g\mapsto \bigl(d_g^2,J_g\bigr), \qquad \Ms\rightarrow C^\infty(U\times U)^2.
\end{equation}
Proving this fact requires revisiting some fairly standard constructions in Riemannian ge\-ome\-try---this tedious but elementary exercise is omitted. Further, we have the following.
\begin{Lemma}
	For every $g\in \Ms$, there exists a smooth function $G^g\colon U\times U\rightarrow \R^{d\times d}$, taking values in symmetric matrices, such that
	\begin{enumerate}[label=\rm(\roman*)]\itemsep=0pt
		\item $G^g(x,x)=g(x)$ for $x\in U$;
		\item $d_g(x,y)^2=(x-y)^jG^g_{jk}(x,y) (x-y)^k$ for $(x,y)\in U\times U$;
		\item $g\mapsto G^g,$ $\Ms\rightarrow C^\infty\bigl(U\times U, \R^{d\times d}\bigr)$ is continuous.
	\end{enumerate}
\end{Lemma}
\begin{proof}
	Let $H_{ij}^g(x,y)=\partial_{y_i}\partial_{y_j} d^2_g(x,y)$, then Taylor expanding $d_g^2(x,\cdot)$ about $y=x$ yields
	\[
		d_g^2(x,y)= \int_{0}^1 \frac{(1-t)}{2} H^g_{ij}(x,x+t(y-x)){\rm d}t\cdot (x-y)^i(x-y)^j
	\]
	and defining $G^g_{ij}$ as this integral expression, the continuity claim (iii) follows directly from \eqref{ctsexc}. The other two properties are well known (see, e.g., \cite[Lemma 8.1.12]{PSU23}).
\end{proof}

We now proceed with the proof that $N^g_0 \in \Psi^{-1}_\cl(U)$ depends continuously on $g\in \Ms$, using the characterisation of the topology in \eqref{toppsido}.
Clearly, the kernel $N^g_0$ is smooth away from the diagonal and due to \eqref{ctsexc} the map
\[
	g\mapsto N^g_0\vert_{U\times U\backslash \Delta_U},\qquad \Ms\rightarrow C^\infty(U\times U\backslash \Delta_U)
\]
is continuous. Next, if $\varphi\in C_c^\infty(U)$, then following \cite[Chapter 8.1]{PSU23}
\[
	N^g_{0,\varphi}(x,y):=\varphi(x)N^g_0(x,y)\varphi(y) = +
	\Op(a^g_\varphi),\qquad a_\varphi^g(x,\xi)=\int_{\R^d} {\rm e}^{{\rm i}z\xi} N^g_{0,\varphi}(x,x-z) {\rm d}z.
\]
As the integrand in the definition of $a^g_\varphi$ is supported in the compact set $|z|\le 2$ and depends continuously on the metric, one can derive continuity of the following map:
\[
	g\mapsto a^g_\varphi,\qquad \Ms\rightarrow C^\infty\bigl(\R^d\times \R^d\bigr).
\]
Classicality is the assertion that the symbol at infinity, defined by
\[
\hat a_\varphi^g(x,\omega,h)=\int_{\R^d} \frac{{\rm e}^{{\rm i}z\omega} q_\varphi^g(x,x-hz) {\rm d}z}{\big[z^iG_{ij}(x,x-hz)z^j\big]^{\frac{d-1}2}}, \qquad q_\varphi^g(x,y) = 2 J_g(x,y)\varphi(x) \sqrt{g(y)} \varphi(y),
\]
is smooth up to $h=0$. This, however, together with the continuity of the map
\[
	g\mapsto \hat a_\varphi^g,\qquad \mathbb M_\mathrm{s} \to C^\infty\bigl(\R^d\times \mathbb S^{d-1} \times [0,\infty)\bigr),
\]
follows from Lemma \ref{tlambda}. This completes the proof of Proposition~\ref{claim2}.
\end{proof}

\subsection*{Acknowledgements}

JB would like to thank {Gunther Uhlmann} for inviting him to the University of Washington, where part of this research was carried out---this also enabled visiting UC Santa Cruz, where he was welcomed with much appreciated hospitality. Further thanks go to {Hadrian Quan} for several useful conversations about Hermitian structures on twistor space and to the referees for their comments with suggestions and improvements. FM was partially supported by NSF-CAREER grant DMS-1943580 and GPP was partially supported by NSF grant DMS-2347868.

\pdfbookmark[1]{References}{ref}
\LastPageEnding


\begin{thebibliography}{99}
\footnotesize\itemsep=0pt

\bibitem{BLP24}
Bohr J., Lefeuvre T., Paternain G.P., Invariant distributions and the transport
 twistor space of closed surfaces,
 \href{https://doi.org/10.1112/jlms.12894}{\textit{J.~Lond. Math. Soc.}}
 \textbf{109} (2024), e12894, 39~pages,
 \href{http://arxiv.org/abs/2301.07391}{arXiv:2301.07391}.

\bibitem{BMP24b}
Bohr J., Monard F., Paternain G.P., Biholomorphism rigidity for transport
 twistor spaces,
 \href{https://doi.org/10.1098/rspa.2024.0809}{\textit{Proc.~A}} \textbf{481}
 (2025), 20240809, 15~pages,
 \href{http://arxiv.org/abs/2410.06359}{arXiv:2410.06359}.

\bibitem{BoPa23}
Bohr J., Paternain G.P., The transport {O}ka--{G}rauert principle for simple
 surfaces, \href{https://doi.org/10.5802/jep.231}{\textit{J.~\'{E}c. Polytech.
 Math.}} \textbf{10} (2023), 727--769,
 \href{http://arxiv.org/abs/2108.05125}{arXiv:2108.05125}.

\bibitem{CGL23}
Ceki\'c M., Guillarmou C., Lefeuvre T., Local lens rigidity for manifolds of
 {A}nosov type, \href{https://doi.org/10.2140/apde.2024.17.2737}{\textit{Anal.
 PDE}} \textbf{17} (2024), 2737--2795,
 \href{http://arxiv.org/abs/2204.02476}{arXiv:2204.02476}.

\bibitem{Dub83}
Dubois-Violette M., Structures complexes au-dessus des vari\'et\'es,
 applications, in Mathematics and {P}hysics ({P}aris,~1979/1982),
 \textit{Progr. Math.}, Vol.~37,
 \href{https://doi.org/10.1007/BF02591680}{Birkh\"auser}, Boston, MA, 1983,
 1--42.

\bibitem{EaSc70}
Earle C.J., Schatz A., Teichm\"uller theory for surfaces with boundary,
 \href{https://doi.org/10.4310/jdg/1214429381}{\textit{J.~Differential
 Geometry}} \textbf{4} (1970), 169--185.

\bibitem{EbMa70}
Ebin D.G., Marsden J., Groups of diffeomorphisms and the motion of an
 incompressible fluid, \href{https://doi.org/10.2307/1970699}{\textit{Ann. of
 Math.}} \textbf{92} (1970), 102--163.

\bibitem{GoRo23}
Gogolev A., Hertz F.R., Smooth rigidity for 3-dimensional volume preserving
 {A}nosov flows and weighted marked length spectrum rigidity,
 \href{http://arxiv.org/abs/2210.02295}{arXiv:2210.02295}.

\bibitem{Gru15}
Grubb G., Fractional {L}aplacians on domains, a development of {H}\"ormander's
 theory of {$\mu$}-transmission pseudodifferential operators,
 \href{https://doi.org/10.1016/j.aim.2014.09.018}{\textit{Adv. Math.}}
 \textbf{268} (2015), 478--528,
 \href{http://arxiv.org/abs/1310.0951}{arXiv:1310.0951}.

\bibitem{GKL22}
Guillarmou C., Knieper G., Lefeuvre T., Geodesic stretch, pressure metric and
 marked length spectrum rigidity,
 \href{https://doi.org/10.1017/etds.2021.75}{\textit{Ergodic Theory Dynam.
 Systems}} \textbf{42} (2022), 974--1022,
 \href{http://arxiv.org/abs/1909.08666}{arXiv:1909.08666}.

\bibitem{GLP25}
Guillarmou C., Lefeuvre T., Paternain G.P., Marked length spectrum rigidity for
 {A}nosov surfaces,
 \href{https://doi.org/10.1215/00127094-2024-0022}{\textit{Duke Math.~J.}}
 \textbf{174} (2025), 131--157,
 \href{http://arxiv.org/abs/2303.12007}{arXiv:2303.12007}.

\bibitem{GM18}
Guillarmou C., Mazzucchelli M., Marked boundary rigidity for surfaces,
 \href{https://doi.org/10.1017/etds.2016.94}{\textit{Ergodic Theory Dynam.
 Systems}} \textbf{38} (2018), 1459--1478,
 \href{http://arxiv.org/abs/1602.02946}{arXiv:1602.02946}.

\bibitem{Hir76}
Hirsch M.W., Differential topology, \textit{Grad. Texts in Math.}, Vol.~33,
 \href{https://doi.org/10.1007/978-1-4684-9449-5}{Springer}, New York, 1976.

\bibitem{Hit80}
Hitchin N.J., Complex manifolds and {E}instein's equations, in Twistor
 {G}eometry and {N}onlinear {S}ystems ({P}rimorsko, 1980), \textit{Lecture
 Notes in Math.}, Vol.~970,
 \href{https://doi.org/10.1007/BFb0066025}{Springer}, Berlin, 1982, 73--99.

\bibitem{Hor07}
H\"ormander L., The analysis of linear partial differential operators.~{III}.
 Pseudo-differential operators, \textit{Class. Math.},
 \href{https://doi.org/10.1007/978-3-540-49938-1}{Springer}, Berlin, 2007.

\bibitem{Huy05}
Huybrechts D., Complex geometry. An introduction, \textit{Universitext},
 \href{https://doi.org/10.1007/b137952}{Springer}, Berlin, 2005.

\bibitem{Leb80}
LeBrun C., Spaces of complex geodesics and related structures, Ph.D.~Thesis,
 {U}niversity of Oxford, 1980.

\bibitem{LeMa02}
LeBrun C., Mason L.J., Zoll manifolds and complex surfaces,
 \href{https://doi.org/10.4310/jdg/1090351530}{\textit{J.~Differential Geom.}}
 \textbf{61} (2002), 453--535,
 \href{http://arxiv.org/abs/math.DG/0211021}{arXiv:math.DG/0211021}.

\bibitem{LM10}
LeBrun C., Mason L.J., Zoll metrics, branched covers, and holomorphic disks,
 \href{https://doi.org/10.4310/CAG.2010.v18.n3.a3}{\textit{Comm. Anal. Geom.}}
 \textbf{18} (2010), 475--502,
 \href{http://arxiv.org/abs/1002.2993}{arXiv:1002.2993}.

\bibitem{Met21}
Mettler T., Metrisability of projective surfaces and pseudo-holomorphic curves,
 \href{https://doi.org/10.1007/s00209-020-02586-6}{\textit{Math.~Z.}}
 \textbf{298} (2021), 69--78,
 \href{http://arxiv.org/abs/2002.05403}{arXiv:2002.05403}.

\bibitem{MePa20}
Mettler T., Paternain G.P., Convex projective surfaces with compatible {W}eyl
 connection are hyperbolic,
 \href{https://doi.org/10.2140/apde.2020.13.1073}{\textit{Anal. PDE}}
 \textbf{13} (2020), 1073--1097,
 \href{http://arxiv.org/abs/1804.04616}{arXiv:1804.04616}.

\bibitem{Monard2020}
Monard F., Functional relations, sharp mapping properties, and regularization
 of the {$X$}-ray transform on disks of constant curvature,
 \href{https://doi.org/10.1137/20M1311508}{\textit{SIAM J.~Math. Anal.}}
 \textbf{52} (2020), 5675--5702,
 \href{http://arxiv.org/abs/1910.13691}{arXiv:1910.13691}.

\bibitem{MNP19a}
Monard F., Nickl R., Paternain G.P., Efficient nonparametric {B}ayesian
 inference for {$X$}-ray transforms,
 \href{https://doi.org/10.1214/18-AOS1708}{\textit{Ann. Statist.}} \textbf{47}
 (2019), 1113--1147, \href{http://arxiv.org/abs/1708.06332}{arXiv:1708.06332}.

\bibitem{MNP20}
Monard F., Nickl R., Paternain G.P., Statistical guarantees for {B}ayesian
 uncertainty quantification in nonlinear inverse problems with {G}aussian
 process priors, \href{https://doi.org/10.1214/21-aos2082}{\textit{Ann.
 Statist.}} \textbf{49} (2021), 3255--3298,
 \href{http://arxiv.org/abs/2007.15892}{arXiv:2007.15892}.

\bibitem{MoQi25}
Monard F., Qi Z., A family of non-simple surfaces whose transport twistor
 spaces admit global blow-down maps,
 \href{http://arxiv.org/abs/2510.09518}{arXiv:2510.09518}.

\bibitem{OR85}
O'Brian N.R., Rawnsley J.H., Twistor spaces,
 \href{https://doi.org/10.1007/BF00054490}{\textit{Ann. Global Anal. Geom.}}
 \textbf{3} (1985), 29--58.

\bibitem{Pal60}
Palais R.S., Local triviality of the restriction map for embeddings,
 \href{https://doi.org/10.1007/BF02565942}{\textit{Comment. Math. Helv.}}
 \textbf{34} (1960), 305--312.

\bibitem{PSU23}
Paternain G.P., Salo M., Uhlmann G., Geometric inverse problems~-- with
 emphasis on two dimensions, \textit{Cambridge Stud. Adv. Math.}, Vol.~204,
 \href{https://doi.org/10.1017/9781009039901}{Cambridge University Press},
 Cambridge, 2023.

\bibitem{PeUh05}
Pestov L., Uhlmann G., Two dimensional compact simple {R}iemannian manifolds
 are boundary distance rigid,
 \href{https://doi.org/10.4007/annals.2005.161.1093}{\textit{Ann. of Math.}}
 \textbf{161} (2005), 1093--1110,
 \href{http://arxiv.org/abs/math.AP/0305280}{arXiv:math.AP/0305280}.

\bibitem{Roc11}
Rochon F., On the uniqueness of certain families of holomorphic disks,
 \href{https://doi.org/10.1090/S0002-9947-2010-05159-1}{\textit{Trans. Amer.
 Math. Soc.}} \textbf{363} (2011), 633--657,
 \href{http://arxiv.org/abs/0709.1118}{arXiv:0709.1118}.

\bibitem{SaUh11}
Salo M., Uhlmann G., The attenuated ray transform on simple surfaces,
 \href{https://doi.org/10.4310/jdg/1317758872}{\textit{J.~Differential Geom.}}
 \textbf{88} (2011), 161--187,
 \href{http://arxiv.org/abs/1004.2323}{arXiv:1004.2323}.

\bibitem{See64}
Seeley R.T., Extension of~{$C^{\infty }$} functions defined in a half space,
 \href{https://doi.org/10.2307/2034761}{\textit{Proc. Amer. Math. Soc.}}
 \textbf{15} (1964), 625--626.

\bibitem{Shu01}
Shubin M.A., Pseudodifferential operators and spectral theory, 2nd ed.,
 \href{https://doi.org/10.1007/978-3-642-56579-3}{Springer}, Berlin, 2001.

\bibitem{Ste93}
Stein E.M., Harmonic analysis: real-variable methods, orthogonality, and
 oscillatory integrals, \textit{Princeton Math. Ser.}, Vol.~43, Princeton
 University Press, Princeton, NJ, 1993.

\bibitem{Tre92}
Tr\`eves F., Hypo-analytic structures, \textit{Princeton Math. Ser.}, Vol.~40,
 Princeton University Press, Princeton, NJ, 1992.

\bibitem{Wun13}
Wunsch J., Microlocal analysis and evolution equations: lecture notes from~2008
 {CMI}/{ETH} summer school, in Evolution {E}quations, Proceedings of the Clay
 Mathematics Institute Summer School (ETH, Z\"urich, 2008), \textit{Clay Math.
 Proc.}, Vol.~17, American Mathematical Society, Providence, RI, 2013, 1--72,
 \href{http://arxiv.org/abs/0812.3181}{arXiv:0812.3181}.

\end{thebibliography}
\end{document}